\newtheorem{defin}{Definition}
\newtheorem{prop}{Proposition}
\newtheorem{theorem}{Theorem}
\newtheorem{lemma}{Lemma}
\newtheorem{cor}{Corollary}
\newtheorem{remark}{Remark}
\newtheorem{example}{Example}
\newcommand{\fn}{^-{}}
\newcommand{\sn}{^\thicksim {}}
\newcommand{\cpol}[1]{\boldsymbol{#1}}
\newcommand{\lres}{\mathop{\backslash}}
\let\ld=\lres
\newcommand{\rres}{\mathop{/}}
\let\rd=\rres
\newcommand{\var}[1]{\mathsf{#1}}     % named varieties
\newcommand{\avar}[1]{\mathcal{#1}}    % arbitrary varieties
\newcommand{\dfeq}{\mathrel{\mathop:}=}  % definitional =
\newcommand{\pfpmv}{pf\Psi MV}    % perfect pseudo MV (category)
\newcommand{\ppmv}{P\Psi MV}      % perfectly generated pseudo MV (variety)
\newcommand{\cigmv}{CanIGMV}      % cancellative integral generalised MV
\newcommand{\pf}{\textit{pf}}
\tikzstyle{vertex}=[circle,fill=blue!30,draw=blue,minimum size=1.5mm,inner
\tikzstyle{every label}=[label distance=0pt]
\tikzstyle{every edge}=[draw=black,thick]
\tikzstyle{dot}=[circle,fill,draw,minimum size=1.5mm,inner sep=0pt]
\tikzstyle{odot}=[circle,draw,minimum size=1.5mm,inner sep=0pt]
\def\pprod{\mathop{\mathchoice{\raisebox{-1ex}{\scalebox{2}[2]{{\rm X}\kern-1.5pt}}}%
{\raisebox{-2.5pt}{\scalebox{1.2}[1.45]{{\rm X}}}}%
{\raisebox{-1.6pt}{{\rm X}}}%
{\raisebox{-1.1pt}{\scalebox{0.6}[0.7]{{\rm X}}}}}}
\newcommand{\ord}{\mathop{\rm dim}}
\title[Kites and pseudo MV-algebras]{Kites and representations of pseudo MV-algebras}
\author[Botur]{Michal Botur$^\dag$}
\author[Kowalski]{Tomasz Kowalski$^\ddag$}
\address{$^\dag$ Palack\' y University Olomouc, Faculty of Sciences}
\email{michal.botur@upol.cz}
\address{$^\ddag$ Department of Logic, Jagiellonian University}
\email{tomasz.s.kowalski@uj.edu.pl}
\thanks{The first author acknowledges support
  by IGA, project P\v{r}F 2021 030 and 2022 17.}
\keywords{Perfect pseudo MV-algebra, kite, variety, representation}
\subjclass[2020]{Primary 06D35, Secondary 06F15}
\begin{document}

\begin{abstract}
We investigate the structure of perfect residuated lattices, focussing
especially on perfect pseudo MV-algebras. We show that perfect pseudo
MV-algebras can be represented as a generalised version of kites
of Dvure\v{c}enskij and Kowalski, and that they are categorically equivalent to
$\ell$-groups with a distinguished automorphism. 
We then characterise varieties generated by kites and describe
the lattice of these varieties as a complete sublattice of the lattice of
perfectly generated varieties of perfect pseudo MV-algebras.
\end{abstract}

\maketitle
\section{Introduction}\label{sec:intro}

This article grew out of an attempt at answering Question~8.4 from
Dvure\v{c}enskij, Kowalski~\cite{DK14}. 
The line of research begun there deals with variants of a rather special
construction of certain algebras called \emph{kites}. 
Kites are most naturally associated with
a noncommutative generalisation of BL-algebras known as \emph{pseudo
  BL-algebras} (see~\cite{DK14} and Dvure\v{c}enskij~\cite{Dvu15}),
however the construction has been used in a broader context
of residuated lattices (e.g., Botur, Dvure\v{c}enskij~\cite{BD18})
and algebras related to quantum computation (e.g., Dvure\v{c}enskij,
Holland~\cite{DH14}; Botur, Dvure\v{c}enskij~\cite{BD15} 
and Dvure\v{c}enskij~\cite{Dvu13}). Another root of the article is
Di Nola, Dvure\v{c}enskij, Tsinakis~\cite{DDT08}, where the notion of
a \emph{perfect MV-algebra} was first generalised to a wider class of residuated
lattices (note however that the terminology of~\cite{DDT08} differs from
what is now the established terminology). 

We begin by establishing a few facts about \emph{perfect residuated lattices},
a naturally defined class inspired by~\cite{DDT08}, but never studied 
in this generality. The class of perfect residuated lattices is a peculiar
mixture of a variety and antivariety, which deserves to be investigated further. 
This is the content of Section~\ref{sec:perfect-rls}. 
Later, we focus on \emph{pseudo MV-algebras} and for the
most part indeed on \emph{perfect pseudo MV-algebras}. This narrowing of view
bears fruit: we obtain several structural results that we believe would not be
possible to discover if we started looking through a wider lens. 

In Section~\ref{sec:perfect-psmv} we start working with perfect pseudo
MV-algebras. We define a very natural generalisation of the kite construction,
and prove that every perfect pseudo MV-algebra is isomorphic to one such, which
enables us to show that the category of pseudo MV-algebras
(where arrows are homomorphisms) is equivalent to the category of $\ell$-groups with a
distinguished automorphism, say $\lambda$
(where arrows are homomorphisms commuting with $\lambda$).
This generalises the results of Di Nola, Lettieri~\cite{DL94} and
Di Nola, Dvure\v{c}enskij, Tsinakis~\cite{DDT08} stating, respectively, that
perfect MV-algebras are categorically equivalent to Abelian $\ell$-groups, and
that \emph{perfect symmetric pseudo MV-algebras} (called perfect GMV-algebras
in~\cite{DDT08}) are categorically equivalent to $\ell$-groups. 
The category of $\ell$-groups with a
distinguished automorphism is itself categorically
equivalent to a variety in the signature extending $\ell$-groups by two unary
functions. 

Section~\ref{sec:kites-and-ppmv} brings back the old kites, but presents them
as hybrid structures consisting of and $\ell$-group and a bi-unary algebra,
which acts on a power of the $\ell$-group by permuting coordinates. 
Curiously, precisely these bi-unars were used in Baldwin, Berman~\cite{BB75}
as the  very first example of a variety with arbitrarily large finite
subdirectly irreducibles but no infinite ones. 

Section~\ref{sec:approximation} deals with the following problem.
It is shown in Section~\ref{sec:perfect-psmv} that every perfect pseudo MV-algebra
is a ``generalised kite''. A quick example shows that the same is not true for
kites. Now, how closely can one approximate a perfect pseudo MV-algebra by
kites? We show that for any given perfect pseudo MV-algebra $\mathbf{A}$ and
an $\ell$-group $\mathbf{L}$, there exists a kite over $\mathbf{L}$, such that
it is the best possible approximation of $\mathbf{A}$ by a kite over $\mathbf{L}$. 

In Section~\ref{sec:varieties} we look at the lattice of varieties generated by
kites as a subposet of the lattice of \emph{perfectly generated varieties} of
pseudo MV-algebras, that is, the varieties generated 
by their perfect members. We show that the former is a complete
sublattice of the latter. In particular,
every perfectly generated variety has a unique kite-generated interior, and a
unique kite-generated closure. The lattice of varieties generated by kites
decomposes as an ordinal sum of a singleton and the direct product
of nontrivial varieties of $\ell$-groups with the divisibility lattice.

\section{Preliminaries}\label{sec:prel}

For the universal algebraic background, the first few chapters of Burris,
Sankappanavar~\cite{BS81} or of~\cite{Ber12} will suffice. For category theory
generalities, Mac Lane~\cite{ML98} has more than enough, but for categorical
equivalences between varieties and related classes of algebras
McKenzie~\cite{McK94} will be useful. Generally we will
identify varieties with appropriate categories, or, to be precise, with classes of
objects of categories whose arrows are homomorphisms. 

We follow the universal algebraic tradition of using
boldface letters to denote algebras, and corresponding lightface for their
universes. Sometimes the notation will be to complex for an easy
boldface/lightface distinction; at such places we will use the categorical 
``absolute value'' notation instead.
For classes of algebras, especially for varieties, we adopt the
conventions from Galatos, Jipsen, Kowalski, Ono~\cite{GJKO07}, namely, we use
calligraphic letters for 
arbitrary classes of algebras, but for named classes we use their
acronyms in sans serif. So for example $\mathcal{V}\subseteq \var{MV}$ will
stand for an arbitrary variety of MV-algebras. 
The names of varieties of residuated lattices, and acronyms for them, we also take
from~\cite{GJKO07}, with the exception of the variety of pseudo MV-algebras
which we call $\var{\Psi MV}$, and not $\var{psMV}$ as in~\cite{GJKO07}.
If needed, other additions to and alterations of these conventions will be 
introduced as we go along. 

For any variety $\mathcal{V}$ we write $\Lambda(\mathcal{V})$ for the lattice of
subvarieties of $\mathcal{V}$, and $\Lambda^+(\mathcal{V})$ for 
the poset of nontrivial subvarieties of $\mathcal{V}$.

\subsection{Residuated lattices and FL-algebras}
We work within the general framework of \emph{residuated lattices},
that is, algebras
$\mathbf A=(A;\wedge, \vee,\cdot,\lres,\rres,1)$ such that
\begin{itemize}
\item[(RL1)] $(A;\wedge,\vee)$ is a lattice, 
\item[(RL2)] $(A;\cdot,1)$ is a monoid, 
\item[(RL3)] the equivalences
$$
y\leq x\lres z \quad\Leftrightarrow\quad 
xy\leq z \quad\Leftrightarrow\quad  x\leq z\rres y
$$
hold for all $x,y,z\in A$.
\end{itemize}
The ordering relation $\leq$ is the natural lattice order on $A$,
and multiplication is written as juxtaposition, unless there is a good reason
not to write it so.

The \emph{residuation equivalences} in (RL3) are themselves equivalent to
four identities, so residuated lattices form a variety, denoted $\var{RL}$.
Expansions of residuated lattices by an additional constant $0$ are known
as \emph{FL-algebras} (for \emph{\textbf{F}ull \textbf{L}ambek calculus});
they form a variety $\var{FL}$, defined by the same identities as residuated
lattices, so $0$ is not assumed to have any special properties, and it is there
only to make it possible to define \emph{negations}, that is, the 
operations: 
$$
x\fn \dfeq  0\rres x \quad\text{and}\quad x\sn \dfeq   x\lres 0,
$$
which will play an important role in the article. FL-algebras satisfying
$x\fn = x\sn$ are called \emph{symmetric}. 
In the signature expanded by $0$, residuated lattices are term equivalent to
FL-algebras satisfying the identity $0=1$. The binding strength of the
operations is, from strongest to weakest: negations, multiplication, division, lattice
operations. Occasionally we may add unnecessary parentheses for legibility. 

Now we briefly recall some facts about
residuated lattices and FL-algebras that we will use. 
For more we refer the reader to~\cite{GJKO07}; we follow the
terminology and notation from there as closely as practicable.
The core of the arithmetic of residuated lattices is built
upon the following identities:
\begin{align*}
\bigvee X \cdot \bigvee Y &= \bigvee\{xy: x\in X, y\in Y\},\\
\bigl(\bigvee X\bigr) \lres z &= \bigwedge\{x\lres z: x\in X\}, & 
z \rres\bigl(\bigvee Y\bigr) &= \bigwedge\{z\rres y: y\in Y\},\\
z \lres \bigl(\bigwedge Y\bigr) &= \bigwedge\{z\lres y: y\in Y\}, &
\bigl(\bigwedge X\bigr) \rres z &= \bigwedge\{x\rres z: x\in X\},                      
\end{align*}
where $X$ and $Y$ are any subsets of the universe $A$ of a residuated lattice
$\mathbf{A}$, and $z\in A$. All equalities are to be read conditionally, that is,
if the left-hand side exists, then so does the right-hand side and they are
equal. The following De Morgan laws follow immediately, if $0$ is in the
signature:
$$
\bigl(\bigvee X\bigr)\sn = \bigwedge\{x\sn: x\in X\},\qquad\qquad\quad
\bigl(\bigvee Y\bigr)\fn = \bigwedge\{y\fn: y\in Y\}.
$$
We will use these identities without further ado, as is the custom in the
trade.

An FL-algebra $\mathbf{A}$ is 
\emph{integral} if $1$ is the largest element of $A$; it
is \emph{0-bounded} if $0$ is the smallest element of $A$.
Integral 0-bounded FL-algebras are known as
FL$_w$-algebras ($w$ for \emph{weakening}), so
according to our conventions $\var{FL_w}$ will stand for the variety of
FL$_w$-algebras. 

Let $\mathbf{A}$ be an FL-algebra, and $a,b\in A$. The \emph{left
  conjugate} of $a\in A$ by $b\in A$ is the element
$\lambda_b(a) \dfeq  (b\lres ab) \wedge 1$ and
the \emph{right conjugate} is $\rho_b(a) \dfeq  (ba\rres b)\wedge 1$.
A subalgebra $\mathbf{S}$ of $\mathbf{A}$ is \emph{normal} in $\mathbf{A}$
if $S$ is closed under conjugation by all elements of $A$. It is
\emph{convex} if $S$ is convex as a subset of $(A;\leq)$.
A \emph{negative cone} $A$ is the set $A\!^-\dfeq \{x\in A: a\leq 1\}$.
Negative cones and convex normal subalgebras of $0$-free reducts of FL-algebras are
structurally important in view of the following fact. 

\begin{prop}\label{prop:CNS}
The congruence lattice of an FL-algebra $\mathbf{A}$ is isomorphic to the lattice of
convex normal subalgebras of the $0$-free reduct $\mathbf{A}\!^{rl}$ of
$\mathbf{A}$. Moreover, any congruence $\theta$ on $\mathbf{A}$ is determined by the
class $[1]_\theta\cap A^-$.
\end{prop}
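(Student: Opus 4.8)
The plan is to reduce the first assertion to the standard correspondence between congruences and convex normal subalgebras of a residuated lattice, and then to derive the ``moreover'' by a reduction to the negative cone carried out inside a single congruence class.

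The starting observation is that the constant $0$ is invisible to congruences: compatibility of an equivalence relation with a nullary operation is a vacuous requirement, so an equivalence relation on $A$ is a congruence of $\mathbf{A}$ exactly when it is a congruence of the $0$-free reduct $\mathbf{A}^{rl}$, and hence $\mathrm{Con}(\mathbf{A})=\mathrm{Con}(\mathbf{A}^{rl})$ as lattices. It therefore suffices to prove the first claim for residuated lattices. There the two mutually inverse maps are the familiar ones: a congruence $\theta$ goes to its unit class $[1]_\theta$, and a convex normal subalgebra $\mathbf{H}$ of $\mathbf{A}^{rl}$ goes to the relation $\theta_H$ defined by
\[
x\mathrel{\theta_H}y \quad\Longleftrightarrow\quad (x\lres y)\wedge 1\in H \ \text{ and }\ (y\lres x)\wedge 1\in H .
\]
I would then check the following. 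First, $[1]_\theta$ is convex because $\theta$ is a lattice congruence, is a subalgebra because $\theta$ is compatible with $\wedge,\vee,\cdot,\lres,\rres$, and is normal because feeding the conjugate terms $\lambda_b(a)=(b\lres ab)\wedge 1$ and $\rho_b(a)=(ba\rres b)\wedge 1$ into compatibility with $\cdot$, $\wedge$ and $\lres$ (resp.\ $\rres$) shows that $a\mathrel\theta 1$ forces $\lambda_b(a)\mathrel\theta 1$ and $\rho_b(a)\mathrel\theta 1$. Second, $\theta_H$ is a congruence. Third, the two assignments are mutually inverse and order preserving, hence give a lattice isomorphism; in particular $\theta=\theta_{[1]_\theta}$, so $\theta$ is determined by $[1]_\theta$.

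For the ``moreover'' I would show that $[1]_\theta$ is in turn recovered from $[1]_\theta\cap A^-$. The claim is that for every congruence $\theta$ and every $x\in A$,
\[
x\mathrel\theta 1 \quad\Longleftrightarrow\quad x\wedge 1\mathrel\theta 1 \ \text{ and }\ \bigl((x\vee 1)\lres 1\bigr)\wedge 1\mathrel\theta 1 ,
\]
and both conditions on the right involve only elements of $A^-$. The forward implication is just compatibility with $\wedge$, $\vee$ and $\lres$. For the converse, since $x\wedge 1\le x\le x\vee 1$ and congruence classes are convex, it is enough to obtain $x\vee 1\mathrel\theta 1$; writing $y=x\vee 1\ge 1$ and $w=(y\lres 1)\wedge 1\le 1$, monotonicity of multiplication gives $w=1w\le yw$, residuation gives $yw\le y(y\lres 1)\le 1$, so $w\le yw\le 1$ and convexity of $[1]_\theta$ yields $yw\mathrel\theta 1$, while compatibility with $\cdot$ gives $yw\mathrel\theta y$, whence $y\mathrel\theta 1$ by transitivity. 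Since $\theta=\theta_{[1]_\theta}$ and, by the displayed equivalence, $[1]_\theta$ is reconstructed from $[1]_\theta\cap A^-$, the map $\theta\mapsto[1]_\theta\cap A^-$ is injective, which is the second assertion.

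I expect the real work to sit in the verification that $\theta_H$ is a congruence, together with the inclusion $[1]_{\theta_H}\subseteq H$. Reflexivity, symmetry and compatibility with $\wedge$ and $\vee$ follow quickly from the residuation identities recalled in the preliminaries, but transitivity and compatibility with $\cdot$, $\lres$ and $\rres$ need convexity and, crucially, normality of $H$, and the non-commutativity of multiplication forces every such step to be run symmetrically on the left and on the right; normality is used precisely to absorb the conjugates that appear when one pushes a division past a product, and the same is true of $[1]_{\theta_H}\subseteq H$. These checks are routine in spirit but genuinely laborious, and they constitute the main obstacle.
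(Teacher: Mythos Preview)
The paper does not prove this proposition: it is stated in the preliminaries as a known structural fact about FL-algebras, with an implicit reference to~\cite{GJKO07}. Your proposal is the standard argument and is correct; in particular, the reduction $\mathrm{Con}(\mathbf{A})=\mathrm{Con}(\mathbf{A}^{rl})$ via the vacuity of compatibility with nullary operations, the mutually inverse maps $\theta\mapsto[1]_\theta$ and $H\mapsto\theta_H$, and your recovery of $[1]_\theta$ from $[1]_\theta\cap A^-$ via the displayed equivalence are all sound. You are also right that the genuine labour lies in checking that $\theta_H$ is a congruence and that $[1]_{\theta_H}\subseteq H$, with normality absorbing the conjugates that arise; this is exactly how the proof runs in the reference.
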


For an arbitrary FL-algebra $\mathbf{A}$, its convex normal subalgebras
are not upward closed in general, but if $\mathbf{A}$ is integral then
they are, and then they coincide with \emph{normal filters}
(called deductive filters in~\cite{GJKO07}).
A normal filter is a set $F\subseteq A$ such that (i)~$F$ 
is a lattice filter with $1\in F$, (ii) $a,b\in F$ implies $ab\in F$, (iii)
$a\in F, b\in A$ implies $\lambda_b(a),\rho_b(a)\in F$.
In integral FL-algebras, the conjugates simplify to
$\lambda_b(a) = b\lres ab$ and $\rho_b(a) = ba\rres b$.

The next lemma can be easily derived from general structure theory of residuated
lattices, but since it is important for our results, we will present it
with a proof. It will also give the readers not familiar with residuated
lattices an opportunity to see some of the arithmetic of residuated lattices in action.
Note that in any residuated lattice $\mathbf{A}$, for any
$a,b,x,y\in A$, we have:  
\begin{enumerate}
\item $a\cdot \lambda_a(x)\leq x\cdot a,$ $\rho_a(x)\cdot a\leq a\cdot x$,
\item $ \lambda_a(x)\cdot \lambda_a(y)\leq \lambda_a (x\cdot y),$
  $\rho_a(x)\cdot \rho_a(y)\leq\rho_a (x\cdot y)$, 
\item $\lambda_b (\lambda_a (x))\leq \lambda_{a\cdot b}(x),$ $\rho_a (\rho_b
  (x))\leq \rho_{a\cdot b}(x)$. 
\end{enumerate}
A \emph{conjugation polynomial} $\cpol{\alpha}$ over $\mathbf{A}$ is any
unary polynomial 
$(\gamma_{a_1}\circ\gamma_{a_2}\circ\cdots\circ\gamma_{a_n})(x)$ 
where $\gamma\in\{\lambda,\rho\}$ and $a_i\in A$ for $1\leq i\leq n$. 
We write $\mathrm{cPol}(\mathbf{A})$ for the set of all conjugation
polynomials over $\mathbf{A}$. 
For an element $u\in A$, an \emph{iterated conjugate} of $u$ is
$\cpol{\alpha}(u)$ for some $\cpol{\alpha}\in \mathrm{cPol}(\mathbf{A})$.

\begin{lemma}\label{FILTER}
Let $\mathbf A$ be an integral FL-algebra.
Then for any normal filter $F\subseteq A$ and any $x\in A$ the set 
$$
F_x=\{a\in A\mid \exists f\in F\,\exists
\cpol{\alpha_1},\dots,\cpol{\alpha_n}\in \mathrm{cPol}(\mathbf{A}) 
\text{ such that } f \cdot\cpol{\alpha_1}(x)\cdots\cpol{\alpha_n}(x)\leq a\}
$$ 
is the smallest normal filter containing $F\cup\{x\}$.
\end{lemma}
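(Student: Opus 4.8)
The plan is to show that $F_x$ is a normal filter, that it contains $F\cup\{x\}$, and that it is contained in every normal filter with this property. The last point is essentially immediate: any normal filter $G\supseteq F\cup\{x\}$ must contain every iterated conjugate $\cpol{\alpha_i}(x)$ by condition~(iii) in the definition of a normal filter (since $\cpol{\alpha_i}\in\mathrm{cPol}(\mathbf{A})$ is a composition of maps $\lambda_b,\rho_b$), hence contains every finite product $\cpol{\alpha_1}(x)\cdots\cpol{\alpha_n}(x)$ by~(ii), hence contains every product $f\cdot\cpol{\alpha_1}(x)\cdots\cpol{\alpha_n}(x)$ with $f\in F$, and finally contains every $a$ above such a product because $G$ is upward closed (as $\mathbf{A}$ is integral). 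Membership of $F\cup\{x\}$ in $F_x$ is also easy: for $f\in F$ take $n=0$ (or, if one prefers to avoid the empty product, note $f\leq f$ directly), and for $x$ itself take $f=1\in F$ and the single polynomial $\cpol{\alpha_1}=\lambda_1$, noting $\lambda_1(x)=(1\lres x)\wedge 1 = x\wedge 1 = x$ by integrality, so $1\cdot\lambda_1(x)=x\leq x$.

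The substance of the proof is verifying that $F_x$ is itself a normal filter. Upward closure is built into the definition. For $1\in F_x$: take $f=1$ and $\cpol{\alpha_1}=\lambda_1$ as above, giving $1\cdot\lambda_1(x) = x \leq 1$ by integrality. Closure under meets: given $a,b\in F_x$ witnessed by $f\cdot\cpol{\alpha_1}(x)\cdots\cpol{\alpha_m}(x)\leq a$ and $g\cdot\cpol{\beta_1}(x)\cdots\cpol{\beta_n}(x)\leq b$, I would concatenate the two lists of conjugation polynomials; using integrality (every factor is below $1$) one gets
\[
(fg)\cdot\cpol{\alpha_1}(x)\cdots\cpol{\alpha_m}(x)\cdot\cpol{\beta_1}(x)\cdots\cpol{\beta_n}(x)\ \leq\ f\cdot\cpol{\alpha_1}(x)\cdots\cpol{\alpha_m}(x)\ \leq\ a,
\]
and symmetrically $\leq b$, hence $\leq a\wedge b$; since $fg\in F$, this shows $a\wedge b\in F_x$. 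Closure under multiplication is the same computation without the meet: the concatenated product lies below $a\cdot b$ (multiply the two inequalities, using that $\cdot$ is order preserving), and rearranging the factors is legitimate because we are free to choose the order of the $\cpol{\alpha_i}$'s — but in fact we do not even need to rearrange, since we may simply list them in whatever order the product $ab$ presents.

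The main obstacle is closure under conjugation, i.e.\ showing $\lambda_c(a)\in F_x$ and $\rho_c(a)\in F_x$ for $a\in F_x$ and $c\in A$. This is exactly where properties~(1)--(3) of conjugates and their behaviour under the single-element conjugation polynomials come in. Starting from $f\cdot\cpol{\alpha_1}(x)\cdots\cpol{\alpha_n}(x)\leq a$, apply $\lambda_c$: since conjugation by a fixed element is order preserving (it is a meet of a residual composition with the constant $1$, both monotone in the relevant argument), we get $\lambda_c\bigl(f\cdot\cpol{\alpha_1}(x)\cdots\cpol{\alpha_n}(x)\bigr)\leq\lambda_c(a)$. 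Then repeatedly apply property~(2), $\lambda_c(uv)\geq\lambda_c(u)\lambda_c(v)$, to push $\lambda_c$ inside the product, obtaining
\[
\lambda_c(f)\cdot\lambda_c(\cpol{\alpha_1}(x))\cdots\lambda_c(\cpol{\alpha_n}(x))\ \leq\ \lambda_c\bigl(f\cdot\cpol{\alpha_1}(x)\cdots\cpol{\alpha_n}(x)\bigr)\ \leq\ \lambda_c(a).
\]
Now $\lambda_c(f)\in F$ because $F$ is a normal filter, and each $\lambda_c(\cpol{\alpha_i}(x)) = (\lambda_c\circ\cpol{\alpha_i})(x)$ is again an iterated conjugate of $x$, i.e.\ $\lambda_c\circ\cpol{\alpha_i}\in\mathrm{cPol}(\mathbf{A})$ — this is the key closure property of $\mathrm{cPol}(\mathbf{A})$ under left composition by $\lambda_c$. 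Hence the left-hand side is a witness showing $\lambda_c(a)\in F_x$. The argument for $\rho_c$ is symmetric, using the right-hand versions in~(1)--(3) and closure of $\mathrm{cPol}(\mathbf{A})$ under composition with $\rho_c$. Putting the three verifications together with the minimality argument from the first paragraph completes the proof. The only mildly delicate point to get right is the bookkeeping with integrality ensuring that inserting or enlarging factors below $1$ only decreases products, which is what makes the ``concatenation'' arguments for meet- and product-closure go through.
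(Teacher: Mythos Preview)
Your overall strategy is sound, and the arguments for upward closure, $1\in F_x$, $F\cup\{x\}\subseteq F_x$, meet-closure, conjugation-closure, and minimality are all correct. But there is a genuine gap in your product-closure step, and the obstacle is non-commutativity, not merely ``bookkeeping with integrality''.

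Multiplying the two witnessing inequalities gives
\[
f\cdot\cpol{\alpha_1}(x)\cdots\cpol{\alpha_m}(x)\cdot g\cdot\cpol{\beta_1}(x)\cdots\cpol{\beta_n}(x)\ \leq\ ab,
\]
which is \emph{not} of the form required by the definition of $F_x$: the factor $g\in F$ is trapped in the middle, and ``being free to choose the order of the $\cpol{\alpha_i}$'s'' does nothing about that, nor does ``listing them in whatever order the product $ab$ presents'' --- $g$ is not a conjugate of $x$. To obtain a legitimate witness you must move $g$ past the block $w\dfeq\cpol{\alpha_1}(x)\cdots\cpol{\alpha_m}(x)$, and this is exactly where property~(1) of conjugates is needed: from $\rho_w(g)\cdot w\leq w\cdot g$ one obtains
\[
\bigl(f\cdot\rho_w(g)\bigr)\cdot\cpol{\alpha_1}(x)\cdots\cpol{\alpha_m}(x)\cdot\cpol{\beta_1}(x)\cdots\cpol{\beta_n}(x)\ \leq\ ab,
\]
and $f\cdot\rho_w(g)\in F$ because $F$ is normal. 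With this insertion your proof is complete.

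The paper economises by proving directly the single statement ``$\cpol{\delta}(ab)\in F_x$ for all $a,b\in F_x$ and all $\cpol{\delta}\in\mathrm{cPol}(\mathbf{A})$''; product-closure (take $\cpol{\delta}=\lambda_1$, which is the identity by integrality), conjugation-closure (take $b=1$), and meet-closure (via $ab\leq a\wedge b$ and upward closure) then fall out at once. The same $\rho_w$ commuting trick appears there, so once your gap is filled the two arguments are essentially the same --- the paper just packages the verifications more compactly and is forced by that packaging to confront the non-commutativity issue explicitly.
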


\begin{proof}
As any normal filter is an up-set closed under products and iterated conjugates,
$F_x$ is clearly contained in the smallest normal filter
containing $F\cup\{x\}$. 

For converse it suffices to show that for any
$a,b\in F_x$ and $\cpol{\delta}\in\mathrm{cPol}(\mathbf{A})$
we have $\cpol{\delta}(ab)\in F_x$.
Let $a,b\in F_x$ and $\cpol{\delta}\in\mathrm{cPol}(\mathbf{A})$. Then there exist
$f_1,f_2\in F$ and
$\cpol{\alpha_1},\dots\cpol{\alpha_n},\cpol{\beta_1},\dots,\cpol{\beta_m}\in
\mathrm{cPol}(\mathbf{A})$ such that 
$$
a\geq f_1\cdot \cpol{\alpha_1}(x)\cdots \cpol{\alpha_n}(x)\text{ and }b\geq
f_2\cdot \cpol{\beta_1}(x)\cdots \cpol{\beta_m}(x).
$$
Then, using the properties (1)--(3) of conjugates, and
writing $\prod_{i=1}^n \cpol{\alpha_i}(x)$ for
the product $\cpol{\alpha_1}(x)\dots \cpol{\alpha_n}(x)$ and similarly for
$\cpol{\beta_j}$, we obtain
\begin{align*}
\cpol{\delta} (a\cdot b) &\geq  \cpol{\delta}\bigl(f_1\cdot
 \prod_{i=1}^n \cpol{\alpha_i}(x)\cdot f_2\cdot
 \prod_{j=1}^m \cpol{\beta_j}(x)\bigr)\\ 
                         &\geq  \cpol{\delta}(f_1)\cdot
\prod_{i=1}^n \cpol{\delta}(\cpol{\alpha_i}(x))\cdot\cpol{\delta}(f_2)\cdot
          \prod_{j=1}^m \cpol{\delta}(\cpol{\beta_j}(x)).
\end{align*}
Then letting $w = \prod_{i=1}^n \cpol{\delta}(\cpol{\alpha_i}(x))$
and $u = \prod_{j=1}^m \cpol{\delta}(\cpol{\beta_j}(x))$, we get 
$$
\cpol{\delta}(f_1)\cdot w\cdot\cpol{\delta}(f_2)\cdot u 
\geq  \cpol{\delta}(f_1) \cdot \rho_w(\cpol{\delta}(f_2))\cdot
      w\cdot u. 
$$
Since $F$ is a normal filter, and
$\cpol{\delta}\circ\cpol{\alpha_1},\dots,\cpol{\delta}\circ\cpol{\alpha_n},
\cpol{\delta}\circ\cpol{\beta_1},\dots,\cpol{\delta}\circ\cpol{\beta_m}
\in\mathrm{cPol}(\mathbf{A})$,
we have
$\cpol{\delta}(f_1) \cdot \rho_w(\cpol{\delta}(f_2))\cdot w\cdot u\in F$.
Hence  $\cpol{\delta} (ab)\in F_x$ as required.
\end{proof}

Since we will frequently jump between
the context of residuated lattices (without bounds) and FL$_w$-algebras (with
bounds) we introduce 
an \emph{ad-hoc} notational device. To indicate the residuated lattice context,
we will write $e$ instead of $1$ for the unit element. That is, whenever
we write $e$ we tacitly assume $e=1=0$. Otherwise, we write $1$, and
assume the  FL$_w$-algebra context. Note that this convention also means that
appearance of $1$ implies integrality, but appearance of $e$ does not.
Furthermore, following the tradition we will use `residuated lattice'
as a generic name, meaning residuated-lattice-or-FL-algebra. 

\subsection{Negative cones of $\ell$-groups, and pseudo MV-algebras} 
A \emph{lattice ordered group} ($\ell$-group) is an algebra $\mathbf
L=(L;\wedge,\vee,\cdot,{}^{-1}, e)$ where $(L;\wedge, \vee)$ is a lattice,
$(L;\cdot,{}^{-1},e)$ is a group and  
\begin{align*}
x(y\wedge z) w &= x y w\wedge x z w,\\
  x (y\vee z) w &= x y w\vee x z w
\end{align*}
hold for any $x,y,z,w\in L$. The variety $\var{LG}$ of $\ell$-groups is
term equivalent to a variety of residuated lattices defined by the
identity $x(x\ld e) = e$, by putting $x\lres y \dfeq x^{-1}y$ and 
$x\rres y \dfeq xy^{-1}$ one way, and $x^{-1} \dfeq x\lres e$ the other.
More background on $\ell$-groups can be found in~\cite{KM94}. For our purposes
here, it will suffice to recall that any $\ell$-group $\mathbf{L}$ is completely
determined by the residuation structure of its negative cone
$L^-=\{x\in L:x\leq e\}$. Namely, defining the algebra
$$
\mathbf L^- = (L^-;\wedge,\vee,\cdot,\ld,\rd,e) 
$$
where $e$, $\wedge$, $\vee$ and $\cdot$ are inherited from
$\mathbf{L}$, and
$$
x\rres y\dfeq xy^{-1}\wedge e, \qquad
y\lres x \dfeq  y^{-1}x\wedge e
$$
we obtain an integral residuated lattice satisfying the identities
\begin{align*}
xy\rres y &= x = y\lres yx\tag{\textrm{Can}}\label{eq:can}\\
x\rres (y\lres x) &= x\vee y =(x\rres y)\lres x.\tag{\textrm{\L{}uk}}\label{eq:luk}
\end{align*}
The first of these is equivalent over residuated lattices to the usual
cancellation laws
$$zx=zy \Rightarrow x=y\qquad\text{and}\qquad  
xz=yz  \Rightarrow x=y.$$
The second amounts to a non-commutative rendering of the
\emph{\L{}ukasiewicz axiom}
$(x\rightarrow y)\rightarrow y = (y\rightarrow x)\rightarrow x$.
Note that $\ell$-groups in the signature of residuated lattices
satisfy~\eqref{eq:can} but not~\eqref{eq:luk}, however they do satisfy
\begin{equation}
x\rres ((x\vee y)\lres x) = x\vee y =(x\rres (x\vee y))\lres
x,\tag{\textrm{Wuk}}\label{eq:wuk} 
\end{equation}
which in integral residuated lattices is equivalent to~\eqref{eq:luk}. 
Residuated lattices satisfying~\eqref{eq:can} are obviously known
as \emph{cancellative}. Commutative integral zero-bounded FL-algebras
satisfying~\eqref{eq:luk} are known as \emph{MV-algebras}. Residuated lattices
satisfying~\eqref{eq:wuk} were investigated in Galatos, Tsinakis~\cite{GT05}, under
the name of \emph{generalised MV-algebras (GMV-algebras)}. This is the standard
terminology now, but in the past (notably in Di Nola, Dvure\v{c}enskij,
Tsinakis~\cite{DDT08}) the name GMV-algebras was also used for what is now
known as \emph{pseudo MV-algebras} which we will define shortly. 
Integral GMV-algebras satisfying~\eqref{eq:wuk} turn out to be
precisely the integral residuated lattices satisfying~\eqref{eq:luk}; they are
called \emph{integral generalised MV-algebras (IGMV-algebras)}.
Cancellative residuated lattices can be integral (although 
they cannot be bounded below), so the identities~\eqref{eq:can}
and~\eqref{eq:luk} jointly define a variety $\var{\cigmv}$ of
\emph{cancellative integral generalised MV-algebras}, whose
members are precisely the negative cones of $\ell$-groups. Since any
residuated lattice homomorphism between $\ell$-groups restricts to a homomorphism
of (the algebras defined on) their negative cones, the map
$$
{}^-\colon \var {LG}\rightarrow \var
{\cigmv}
$$
is a functor. Moreover, it has a right adjoint 
$$
\ell\colon \var {\cigmv}\rightarrow \var {LG}
$$ 
such that $\ell(\mathbf A)^-=\mathbf A$ for any $\mathbf A\in \var
{\cigmv}$ and $\ell(\mathbf L^-)\cong \mathbf L$ for any $\mathbf L\in
\var {LG}$. These functors establish a categorical equivalence
between $\var{LG}$ and $\var{\cigmv}$. In particular, the subvariety lattices
of $\var{LG}$ and $\var{\cigmv}$ are isomorphic.

The next proposition will not be used until Section~\ref{sec:perfect-psmv}
(proof of Lemma~\ref{KiteHom}). Since it belongs to $\ell$-group folklore,
and indeed follows immediately from the $\ell$-group equality $x = (x\vee
e)(x\wedge e)$, we state it here.  

\begin{prop}\label{aut-commut}
Let $\mathbf{L}_1$ and $\mathbf{L}_2$ be $\ell$-groups, and
$\lambda_1$ and $\lambda_2$ automorphisms of $\mathbf{L}_1$   
and $\mathbf{L}_2$. If $f\colon\mathbf{L}_1\rightarrow \mathbf{L}_2$
is a homomorphism such that $(f\circ \lambda_1)(x) = (\lambda_2\circ f)(x)$ 
for all $x\in L_1^+\cup L_1^-$, then
$(f\circ \lambda_1)(x) = (\lambda_2\circ f)(x)$
holds for all $x\in L_1$.
\end{prop}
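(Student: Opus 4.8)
The plan is to reduce the claim to the hypothesis by means of the standard $\ell$-group decomposition $x = (x\vee e)(x\wedge e)$, valid in any $\ell$-group. Fix an arbitrary $x\in L_1$. Since $x\vee e\in L_1^+$ and $x\wedge e\in L_1^-$, both factors of this decomposition lie in the set $L_1^+\cup L_1^-$ on which $f\circ\lambda_1$ and $\lambda_2\circ f$ are assumed to agree.

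First I would use that $\lambda_1$ is an automorphism and $f$ a homomorphism, so both preserve the group multiplication, to write
$$
(f\circ\lambda_1)(x) = f\bigl(\lambda_1(x\vee e)\cdot\lambda_1(x\wedge e)\bigr)
= f\bigl(\lambda_1(x\vee e)\bigr)\cdot f\bigl(\lambda_1(x\wedge e)\bigr).
$$
Then I would apply the hypothesis separately to the two factors $x\vee e\in L_1^+$ and $x\wedge e\in L_1^-$ to replace $f\circ\lambda_1$ by $\lambda_2\circ f$ on each, obtaining
$$
(f\circ\lambda_1)(x) = \lambda_2\bigl(f(x\vee e)\bigr)\cdot\lambda_2\bigl(f(x\wedge e)\bigr).
$$
Finally, using again that $\lambda_2$ and $f$ are homomorphisms, I would recombine:
$$
\lambda_2\bigl(f(x\vee e)\bigr)\cdot\lambda_2\bigl(f(x\wedge e)\bigr)
= \lambda_2\bigl(f((x\vee e)(x\wedge e))\bigr) = \lambda_2\bigl(f(x)\bigr) = (\lambda_2\circ f)(x),
$$
which is the desired equality.

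There is essentially no substantive obstacle here: the whole argument is a two-line manipulation, and the only point requiring the slightest care is to observe that the hypothesis, although stated only on $L_1^+\cup L_1^-$, suffices because \emph{both} factors in the decomposition $x=(x\vee e)(x\wedge e)$ automatically lie in that set — one in the positive cone, one in the negative cone — so no element outside the assumed domain of agreement is ever needed. (If one wanted to avoid invoking the displayed $\ell$-group identity by name, the same proof works verbatim with any factorisation $x = yz$ with $y\in L_1^+$, $z\in L_1^-$; the chosen one is simply the canonical such factorisation.)
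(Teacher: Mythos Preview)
Your proof is correct and matches the paper's approach exactly: the paper states the proposition without a detailed proof, noting only that it ``follows immediately from the $\ell$-group equality $x = (x\vee e)(x\wedge e)$,'' which is precisely the decomposition you use.
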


The variety $\var{\Psi MV}$ of \emph{pseudo MV-algebras} is 
a subvariety of $\mathsf{FL_w}$ defined by~\eqref{eq:luk}.
A fundamental result in the theory of pseudo MV-algebras, due to
Dvure\v censkij~\cite{Dvu02}, is  
that every pseudo MV-algebra is isomorphic to an algebra
$$
\Gamma(\mathbf L,u^{-1}) = ([u^{-1},e];\wedge,
\vee,\odot,\lres,\rres,u^{-1},e)
$$
where 
$\mathbf{L}$ is an $\ell$-group, $u\in L$ is a strong unit, 
$[u^{-1},e]=\{a\in L\mid u^{-1}\leq a\leq e\}$, the operations $\vee$ and $\wedge$
are inherited from $\mathbf{L}$ and the other operations are defined by
$$
x\odot y\dfeq x y\vee u^{-1},\quad x\lres y \dfeq x^{-1} y\wedge
e,\quad y\rres x\dfeq y x^{-1}\wedge e.
$$ 
Indeed, $\Gamma$ is precisely the Chang-Mundici functor applied in a non-Abelian
setting and Dvure\v{c}enskij~\cite{Dvu02} shows that it has an
appropriate adjoint.  

Pseudo MV-algebras were originally defined and studied by
Georgescu, Ior\-gu\-les\-cu~\cite{GI99} and~\cite{GI01},
as algebras $(A;\oplus,\fn,\sn,0,1)$ satisfying the identities:
\begin{itemize}
\item[(A1)] $x\oplus (y\oplus z) = (x\oplus y)\oplus z$,
\item[(A2)] $x\oplus 0=x$,
\item[(A3)] $x\oplus 1=1$,
\item[(A4)] $(x\fn\oplus y\fn)\sn=(x\sn\oplus y\sn)\fn$,
\item[(A5)] $(x\oplus y\sn)\fn \oplus x\fn = y\oplus(x\fn\oplus y)\sn$,
\item[(A6)] $x\oplus (y\fn\oplus x)\sn = y\oplus (x\fn\oplus y)\sn$ 
\item[(A7)] $x\fn\sn =x$,
\item[(A8)] $0\fn=1$.
\end{itemize}
The identities $0\oplus x = x$ and $1\oplus x = 1$ follow, as well as
$1\fn=0=1\sn$, and   $x\sn\fn =x$.
Defining $x\odot y\dfeq (x\fn\oplus y\fn)\sn$ one can show that
the identities below also hold.
\begin{align*}
x \odot 0 &= 0 = 0\odot x,\\
x \odot 1 &= x = 1\odot x,\\  
x\oplus(y\odot x\sn) &= y\oplus(x\odot y\sn)
 =(x\fn\odot y)\oplus x=(y\fn\odot x)\oplus y,\\ 
(x\fn\oplus y)\odot x &= y\odot (x\oplus y\sn).
\end{align*}
                        
Although we work in the setting of residuated lattices, some of the literature we cite
and some calculations we carry out, especially in the proof of Theorem~\ref{KitesRep},
use the original definition, so we recall the term equivalence between the two. 
In any pseudo MV-algebra defined by (A1)--(A8), the lattice
operations, multiplication and residuals are defined by 
\begin{align*}
x\vee y &\dfeq  x\oplus(y\odot x\sn), & x\wedge y &\dfeq
  (x\fn\oplus y)\odot x,\\
x\cdot y &\dfeq (x\fn\oplus y\fn)\sn, & & \\
x\lres y &\dfeq y\oplus x\sn, &  y\rres  x &\dfeq  x\fn\oplus y.
\end{align*}
Lattice order is then defined by any of the following mutually
equivalent conditions:  
$$
x\fn\oplus y=1,\quad  y\oplus x\sn=1,\quad x\odot y\sn=0, \quad y\fn\odot x=0.
$$
Conversely, we have:
$$
x\odot y \dfeq x\cdot y,\qquad x\oplus y\dfeq (x\fn\cdot y\fn)\sn,
$$ 
so the two definitions are term equivalent.

\section{Perfect residuated lattices}\label{sec:perfect-rls}

Perfect residuated lattices are a natural generalisation of \emph{perfect
  MV-algebras} defined in Belluce, Di Nola, Lettieri~\cite{BDL93}.
The authors define an MV-algebra $\mathbf{A}$ to be perfect if
for any element $a\in A$ exactly one of $a$ and $a\fn$ is of finite order,
where the order of an element $u$ is the least positive integer $m$
such that $u^m = 0$ or $\infty$ if no such $m$ exists (to be precise,
\cite{BDL93} defines a dual notion, but this form fits better in our notation).
The key structural property of
a perfect MV-algebra $\mathbf{A}$ is that there exists a surjective
homomorphism $h\colon \mathbf{A}\rightarrow \mathbf{2}$ such
that $h^{-1}(0)\leq h^{-1}(1)$, that is, if $h(x) = 0$ and $h(y) = 1$, then
$x\leq y$. Conversely, if this property holds in an MV-algebra $\mathbf{A}$,
then all members of $h^{-1}(0)$ are of finite order and
no member of $h^{-1}(1)$ is. However, the notion of finite order does not lend
itself easily to generalisations, whereas the structural property we just mentioned,
does. In fact, it is already general enough.

\begin{defin}\label{perfect}
An FL$_w$-algebra $\mathbf{A}$ will be called\/ \emph{perfect} if there
is a homomorphism $h_\mathbf A\colon \mathbf{A}\rightarrow\bm{2}$ such
that for any $x\in h_\mathbf A^{-1}(0)$ and any $y\in h_\mathbf A^{-1}(1)$ the
inequality $x\leq y$ holds.
\end{defin}
To spare notation, we put $F_\mathbf A \dfeq  h_\mathbf A^{-1}(1)$ and
$J_\mathbf A \dfeq  h_\mathbf A^{-1}(0)$, whenever $h$ is clear from context. 
Clearly, $F_\mathbf A$ is a maximal normal filter and
$J_\mathbf A = A\setminus F_{\mathbf{A}}$ is a lattice ideal.

For MV-algebras or symmetric
pseudo MV-algebras, Definition~\ref{perfect} is equivalent to
the original definition. However, it is immediately clear that our definition
applies to  any double pointed algebra in any signature $\tau$ such that Boolean
algebras are term equivalent to $\tau$-algebras. Take for example the extended
natural numbers $\overline{\mathbb{N}} = \mathbb{N}\cup\{\infty\}$, in the
signature of ordered semirings;  then $\overline{\mathbb{N}}$ is perfect.

\begin{lemma}\label{hom-unique}
Let $\mathbf{A}$ be a perfect FL$_w$-algebra. Then the homomorphism
$h_\mathbf A\colon \mathbf A\rightarrow\bm 2$ is unique. Hence
$\mathbf{A}$ has a unique maximal normal filter.
\end{lemma}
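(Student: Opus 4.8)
The plan is to show that any homomorphism $h\colon\mathbf A\to\bm 2$ witnessing perfection is forced to be the obvious one, by proving that its kernel filter $F = h^{-1}(1)$ must coincide with the set of elements lying above some fixed ``small'' element. Concretely, I would first observe that by Proposition~\ref{prop:CNS} the homomorphism $h$ is determined by $F\cap A^- = F^-$, so it suffices to show $F^-$ is uniquely determined. Suppose $h_1,h_2$ are two such homomorphisms with filters $F_1,F_2$ and ideals $J_1 = A\setminus F_1$, $J_2 = A\setminus F_2$. The defining property says $J_i\leq F_i$ elementwise for $i=1,2$.

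The heart of the argument is to derive a contradiction from $F_1\neq F_2$. By symmetry assume there is $a\in F_1\setminus F_2$, so $a\in F_1$ and $a\in J_2$. Pick any $b\in J_1$ with $b\leq 1$ (for instance $b = 0$, which is in every proper normal filter's complement since $h_i(0)=0$). Then $b\in J_1$ and $a\in F_1$, so the perfection property for $h_1$ gives $b\leq a$. On the other hand $a\in J_2$ and $J_2$ is a lattice ideal, hence downward closed, so everything below $a$ lies in $J_2$; in particular $0\leq a$ always, which is no contradiction yet. The real leverage comes from combining the filter property with the ideal property in both directions: since $a\in J_2$, the perfection property for $h_2$ applied to $a\in J_2$ and any $c\in F_2$ gives $a\leq c$, so $a$ is below \emph{every} element of $F_2$; but $1\in F_2$ and also, crucially, $a\vee 1 = 1$ forces nothing, so instead I look at $a\fn$ or at products. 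The cleanest route: $a\in F_1$ a normal filter implies $a^n\in F_1$ for all $n$; and $a\in J_2$ implies, since in an FL$_w$-algebra $a\fn = 0\rres a$ and one shows $a\fn\in F_2$ whenever $a\in J_2$ (because $h_2(a)=0$ in $\bm 2$ gives $h_2(a\fn)=1$), that $a\fn\in F_2$. Then $a\in F_1$ and $a\fn\in F_2$; now use that $a\wedge a\fn\leq a$ and $a\wedge a\fn\leq a\fn$, and that $a\fn\in J_1$ (as $a\in F_1$ forces $h_1(a\fn)=0$). So $a\fn\in J_1$ and $a\in F_1$, whence $a\fn\leq a$ by perfection of $h_1$. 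Symmetrically $a\in J_2$ and $a\fn\in F_2$ give $a\leq a\fn$. Therefore $a = a\fn$, which in an FL$_w$-algebra (where $0\rres 1 = 0 \neq 1$ in any nontrivial algebra, and $x\fn = x$ forces $x = 0\rres x\leq 0\rres 0 = 1$ and symmetric collapse) forces the algebra to be trivial — contradiction, since $\bm 2$ is a nontrivial quotient.

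I would then state the conclusion: $F_1 = F_2$, so $h_1 = h_2$, establishing uniqueness of $h_\mathbf A$; and since $F_\mathbf A = h_\mathbf A^{-1}(1)$ is the unique maximal normal filter (it is maximal because $\bm 2$ is simple, hence $F_\mathbf A$ is a coatom in the congruence lattice via Proposition~\ref{prop:CNS}, and any maximal normal filter would give another surjection onto $\bm2$ or onto another simple algebra — but here I only need that \emph{a} maximal normal filter giving the perfect structure is unique, which follows from what was just shown), $\mathbf A$ has a unique maximal normal filter.

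The main obstacle I anticipate is pinning down precisely the step ``$a = a\fn$ forces triviality'' and making sure I am using only properties available in $\mathsf{FL_w}$ rather than accidentally invoking the \L ukasiewicz axiom; I expect this needs the integrality and $0$-boundedness: from $a\le a\fn$ and $a\fn\le a$ we get $a=a\fn=0\rres a$, and then $1 = 0\rres 0 \ge 0\rres a = a \ge a\cdot 1 = a$ wait — more carefully, $a = 0\rres a$ and applying this to the largest element $1$: we want to conclude $a=1$ or $a=0$ and that these coincide. Since $a\in F_1$, $a$ is "large", and $a\fn\in J_1$, $a\fn$ is "small", yet they are equal — this already contradicts $J_1\cap F_1=\emptyset$ directly, with no need for the triviality detour, because $a=a\fn$ would put the same element in both the filter and its complement. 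That is in fact the cleanest finish, and I would present the argument in that streamlined form.
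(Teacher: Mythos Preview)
Your final argument is correct: from $a\in F_1\setminus F_2$ you get $a\fn\in J_1\cap F_2$ (homomorphisms preserve the term operation $\fn$, and in $\bm 2$ one has $1\fn=0$, $0\fn=1$), then perfection of $h_1$ gives $a\fn\leq a$ and perfection of $h_2$ gives $a\leq a\fn$, whence $a=a\fn\in F_1\cap J_1=\emptyset$. The paper takes a shorter route that avoids negations entirely: from $x\in F\setminus F'$ one has $x\in J'$, so by perfection $x$ lies below every element of $F'$; upward closure of the filter $F$ then forces $F'\subseteq F$, and maximality of $F'$ gives $F'=F$, a contradiction. The paper's argument also buys more: with the roles reversed (pick $x\in G\setminus F$ for an \emph{arbitrary} maximal normal filter $G$ and use only the perfection of $h$) the same two lines yield the ``unique maximal normal filter'' clause directly. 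Your negation-based argument, by contrast, needs the specific fact $0\fn=1$ in $\bm 2$ to locate $a\fn$ in $F_2$, so it does not immediately cover maximal normal filters whose simple quotient is not $\bm 2$ --- a gap your closing paragraph notices but does not close.
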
  

\begin{proof}
Suppose $h,h'\colon \mathbf A\rightarrow\bm 2$ are homomorphisms, and
consider $F_\mathbf{A}$ and  $F'_\mathbf{A}$. To get a contradiction,
suppose $x\in F_\mathbf{A}\setminus F'_\mathbf{A}$. Then
$x\leq y$ holds for all $y\in F'_\mathbf{A}$, and therefore
$F'_\mathbf{A}\subseteq F_\mathbf{A}$. By maximality,
$F'_\mathbf{A} = F_\mathbf{A}$ contradicting the choice of $x$. 
\end{proof}

For an MV-algebra, having a unique maximal normal filter is precisely the
property of being 
\emph{local}, also defined in Belluce, Di Nola, Lettieri~\cite{BDL93}, so both
concepts (perfectness and locality) generalise smoothly to FL$_w$-algebras. 
The next lemma, whose easy proof we leave to the reader, restates in this
setting the observations made in Di Nola, Lettieri~\cite{DL94} 
for perfect MV-algebras. The nontriviality assumption for homomorphic images is
only necessary because the trivial algebra is not perfect, a fact overlooked
in Proposition 3.6 of~\cite{DL94}.

\begin{lemma}\label{HSPu-closure}
The class of perfect FL$_w$-algebras is closed under
nontrivial homomorphic images, subalgebras and ultraproducts.
\end{lemma}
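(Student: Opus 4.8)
The plan is to verify closure under each of the three class operators separately, in each case producing the required homomorphism onto $\bm 2$ with the order-separation property from the data at hand. Throughout I use Lemma~\ref{hom-unique}, which guarantees that the witnessing homomorphism for a perfect algebra is canonical, so there is no ambiguity in what one has to transport.

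For \emph{homomorphic images}: suppose $\mathbf A$ is perfect with witness $h_\mathbf A\colon\mathbf A\to\bm 2$, and let $g\colon\mathbf A\twoheadrightarrow\mathbf B$ be a surjection onto a nontrivial $\mathbf B$. Since $\ker g$ is a congruence on $\mathbf A$, it corresponds via Proposition~\ref{prop:CNS} to a convex normal subalgebra, equivalently a normal filter $G=[1]_{\ker g}$ contained in $A^-$-data; the maximal normal filter $F_\mathbf A$ is the unique maximal one by Lemma~\ref{hom-unique}. Because $\mathbf B$ is nontrivial, $G\neq A$, so $G\subseteq F_\mathbf A$, and hence $h_\mathbf A$ factors through $g$, giving a homomorphism $h_\mathbf B\colon\mathbf B\to\bm 2$ with $h_\mathbf B\circ g=h_\mathbf A$. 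Order-separation transfers: if $h_\mathbf B(b)=0$, $h_\mathbf B(b')=1$, pick preimages $a,a'$; then $h_\mathbf A(a)=0$, $h_\mathbf A(a')=1$, so $a\leq a'$ and thus $b=g(a)\leq g(a')=b'$.

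For \emph{subalgebras}: if $\mathbf S\leq\mathbf A$ with $\mathbf A$ perfect, restrict: $h_\mathbf A{\restriction}S\colon\mathbf S\to\bm 2$ is a homomorphism. It is surjective because $0^{\mathbf S}=0^{\mathbf A}\in J_\mathbf A$ and $1^{\mathbf S}=1^{\mathbf A}\in F_\mathbf A$, so both values are attained. Order-separation is inherited verbatim since $\leq$ on $\mathbf S$ is the restriction of $\leq$ on $\mathbf A$. For \emph{ultraproducts}: let $\mathbf A=\prod_{i\in I}\mathbf A_i/\mathcal U$ with each $\mathbf A_i$ perfect via $h_i$. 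Define $h\colon\mathbf A\to\bm 2$ by $h([(a_i)_i])=1$ iff $\{i: h_i(a_i)=1\}\in\mathcal U$; this is well-defined and a homomorphism because $\bm 2$ has two elements and $\mathcal U$ is an ultrafilter (equivalently, $h$ is $\Lambda/\mathcal U$ applied to the family $(h_i)$). It is surjective since $\mathbf 2\not\cong\mathbf 1$. For order-separation, if $h([(a_i)])=0$ and $h([(b_i)])=1$, then $U=\{i: h_i(a_i)=0\text{ and }h_i(b_i)=1\}\in\mathcal U$, and on $U$ we have $a_i\leq b_i$ by perfectness of $\mathbf A_i$; since $\{i: a_i\leq b_i\}\supseteq U\in\mathcal U$, we get $[(a_i)]\leq[(b_i)]$ in the ultraproduct.

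The only genuinely delicate point is the homomorphic-image case, and it is exactly where Proposition~\ref{prop:CNS} and Lemma~\ref{hom-unique} earn their keep: one must know that a proper congruence quotient cannot ``collapse past'' the maximal normal filter, which is precisely the statement that $F_\mathbf A$ is the unique maximal normal filter and that every proper congruence is contained in the corresponding one. The nontriviality hypothesis is essential here (and only here) because it is what forces $G\subsetneq A$; without it one could quotient by all of $A$ and land on $\mathbf 1$, which is not perfect. The subalgebra and ultraproduct cases are routine transfers of the defining first-order-plus-order condition, modulo the small observation that $0$ and $1$ are preserved so that the restricted or ultraproduct homomorphism stays onto.
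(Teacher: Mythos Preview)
The paper does not give a proof of this lemma; it explicitly leaves it to the reader, noting only that the argument follows the pattern of Di Nola and Lettieri for perfect MV-algebras. Your proof is correct and is exactly the expected one: restrict for subalgebras, use \L{}o\'s to push the witnessing maps through an ultraproduct, and factor $h_\mathbf A$ through the quotient for homomorphic images.

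One small point worth making explicit in the homomorphic-image case: the implication ``$G$ proper $\Rightarrow G\subseteq F_\mathbf A$'' uses slightly more than Lemma~\ref{hom-unique} as stated. You need that every proper normal filter of an FL$_w$-algebra extends to a maximal one, which is a routine Zorn argument (the union of a chain of proper normal filters omits $0$, hence remains proper). Combined with the uniqueness of the maximal normal filter from Lemma~\ref{hom-unique}, this gives the containment you claim, and then the factorisation $h_\mathbf B\circ g=h_\mathbf A$ and the order-separation transfer go through exactly as you write.
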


The class of perfect FL$_w$-algebras is not closed under direct products,
but it is closed under certain subdirect products which we will now define.
For any family $\{\mathbf{A}_i\}_{i\in I}$ of perfect residuated lattices
we define the algebra  
$$
\pprod_{i\in I}\mathbf{A}_i \leq \prod_{i\in I}\mathbf A_i 
$$
whose elements are those $x\in\prod_{i\in I}A_i$
for which there is $k\in\{0,1\}$ 
such that $f_{\mathbf A_i}(x(i))=k$ for every $i\in I$.  Then
$\pprod_{i\in I}\mathbf A_i$ is also perfect and the corresponding
homomorphism
$$
f_{\pprod_{i\in I}\mathbf A_i}\colon\pprod_{i\in I}\mathbf A_i\rightarrow \bm 2
$$
satisfies 
$$
f_{\pprod_{i\in I}\mathbf A_i} (x)=f_{\mathbf{A}_i}(x(i))
$$ for any $i\in I$. Analogously we define 
$$
\pprod_{I}\mathbf A \leq \mathbf{A}^I
$$ 
where $\mathbf A$ is a perfect residuated lattice and $I$ is a set.
We call $\pprod$ a \emph{perfect product} and use $X$ for the associated class
operator. We will make some use of it in Sections~\ref{sec:kites-and-ppmv}
and~\ref{sec:approximation}.

For any class $\mathcal{K}$ of FL$_w$-algebras,
we denote the subclass of all its perfect members
by $\mathcal{K}_{\pf}$.  
We say that a variety $\mathcal{V}$ of FL$_w$-algebras is
\emph{perfectly generated} if it is generated by its perfect members, that is, if
$\mathcal{V} = V(\mathcal{V}_{\pf})$. 

\begin{theorem}\label{PerfGenEqBase}
A subvariety $\mathcal{V}$ of $\var{FL_w}$ is  perfectly generated if and
only if $\mathcal{V}$ is nontrivial and satisfies the following identities: 
\begin{align}
\cpol{\alpha}(x\rres x\fn)\vee \cpol{\beta}(x\fn\rres x) &= 1,\\
  \cpol{\alpha}((x\vee x\fn)\cdot (y\vee y\fn))\fn &\leq 
     \cpol{\alpha}((x\vee x\fn)\cdot (y\vee y\fn)),\\ 
x\wedge x\fn &\leq  y\vee y\fn
\end{align}
for every $\mathbf{A}\in\mathcal{V}$ and all $\cpol{\alpha},\cpol{\beta}\in
\mathrm{cPol}(\mathbf{A})$. 
\end{theorem}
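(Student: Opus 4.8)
The plan is to prove the two directions separately, treating the "only if" direction as the routine one and concentrating effort on the "if" direction, which requires constructing, for each algebra satisfying the identities, a homomorphism onto $\bm 2$ with the required convexity property.

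For the \emph{forward direction}, I would start from the observation that the class of perfect $\mathsf{FL_w}$-algebras is closed under subalgebras, nontrivial homomorphic images and ultraproducts (Lemma~\ref{HSPu-closure}), hence $V(\mathcal{V}_{\pf})$ is generated as a quasivariety by perfect algebras together with the trivial algebra. So it suffices to check that each of the three displayed identities holds in every perfect $\mathsf{FL_w}$-algebra $\mathbf{A}$. For such an $\mathbf A$ with homomorphism $h_{\mathbf A}\colon\mathbf A\to\bm2$, every element lies in $F_{\mathbf A}$ or in $J_{\mathbf A}$; using that $x\mapsto x\fn$ swaps the two classes, that conjugation polynomials preserve $F_{\mathbf A}$ (it is a normal filter) and send everything to $1$ when evaluated at an element of $F_{\mathbf A}$, one verifies the identities by a short case split on whether $x$ (and $y$) lie above or below the "cut". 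For identity (1), note $x\rres x\fn$ and $x\fn\rres x$ behave as the Łukasiewicz-style residuals do: exactly one of them is $1$ and the other lands in $J_{\mathbf A}$, and then applying any $\cpol\alpha$ to the one equal to $1$ keeps it $1$, so the join is $1$. For (2), $(x\vee x\fn)$ and $(y\vee y\fn)$ both lie in $F_{\mathbf A}$, so their product does, so any $\cpol\alpha$ applied to it equals $1$, and $1\fn = 0 \le 1$. For (3), $x\wedge x\fn\in J_{\mathbf A}$ while $y\vee y\fn\in F_{\mathbf A}$, and the convexity condition gives $x\wedge x\fn\le y\vee y\fn$. Since these identities survive $\mathsf{H}$, $\mathsf{S}$ and $\mathsf{P}$, they hold throughout $V(\mathcal V_{\pf})$.

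For the \emph{converse direction}, assume $\mathcal V$ is nontrivial and satisfies (1)--(3); I must show every $\mathbf A\in\mathcal V$ — or at least enough of them — is perfect, and more precisely that $\mathcal V=V(\mathcal V_{\pf})$. The natural strategy is: take a subdirectly irreducible $\mathbf A\in\mathcal V$ and show it is perfect; since every variety is generated by its subdirectly irreducibles, this gives $\mathcal V\subseteq V(\mathcal V_{\pf})$, and the reverse inclusion is trivial. So let $\mathbf A$ be subdirectly irreducible. The key move is to define the candidate filter $F \dfeq \{a\in A : a\ge \cpol{\alpha_1}(u)\cdots\cpol{\alpha_n}(u)$ for some $u$ with $u\vee u\fn=1$ and some iterated conjugates$\}$ — equivalently, the normal filter generated by all elements of the form $x\vee x\fn$ — and dually $J\dfeq\{a : a\fn\in F\}$, and then argue that $F\cup J = A$ and $F\cap J=\emptyset$, with $J$ below $F$ elementwise. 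Identity (3) is exactly what forces $J$ to lie below $F$. Identity (1) is what forces $F\cup J = A$: for any $x$, $\cpol\alpha(x\rres x\fn)\vee\cpol\beta(x\fn\rres x)=1$ should, via subdirect irreducibility and Proposition~\ref{prop:CNS}/Lemma~\ref{FILTER}, force one of $x\rres x\fn$, $x\fn\rres x$ into $F$, whence $x$ itself lands in $F$ or $J$. Identity (2) is what guarantees $F$ is proper (equivalently $0\notin F$): it says iterated conjugates of products of "positive parts" $x\vee x\fn$ can never drop below their own negation, which in a nontrivial algebra prevents them from collapsing to $0$; here is where nontriviality of $\mathcal V$ is used. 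Having $F$ a proper normal filter with $F\cup J=A$, the quotient $\mathbf A/F$ is a two-element algebra $\bm 2$ and $h_{\mathbf A}$ is the quotient map, and the elementwise inequality $J\le F$ is precisely the perfectness condition of Definition~\ref{perfect}.

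\textbf{The main obstacle} I anticipate is the converse direction's step that identity~(1), together with subdirect irreducibility, really does put one of the two conjugated residuals into the filter $F$: this is where the interplay between the congruence/normal-filter correspondence (Proposition~\ref{prop:CNS}), the explicit description of generated normal filters (Lemma~\ref{FILTER}), and the monolith of a subdirectly irreducible algebra must be handled carefully, because $F$ as defined is a union of principal filters and one needs that if neither $x\rres x\fn$ nor $x\fn\rres x$ lies in $F$ then the normal filters they generate both avoid $0$ yet their join is everything — contradicting subdirect irreducibility. Making that contradiction precise, and making sure that the filter $F$ chosen uniformly across the algebra is the \emph{same} filter witnessing perfectness (rather than an $x$-dependent one), is the delicate bookkeeping; the rest is the kind of residuated-lattice arithmetic already displayed in the proof of Lemma~\ref{FILTER}.
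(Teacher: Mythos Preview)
Your overall strategy is close to the paper's and can be made to work, but the forward direction contains a genuine slip and the converse needs its steps reordered.

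\textbf{Forward direction.} The assertion that conjugation polynomials ``send everything to $1$ when evaluated at an element of $F_{\mathbf A}$'' is false; they merely preserve $F_{\mathbf A}$. For identity~(1) the correct reasoning is that one of $x\rres x\fn$, $x\fn\rres x$ actually \emph{equals} $1$ (since $x$ and $x\fn$ are comparable in a perfect algebra), and iterated conjugates of $1$ are $1$. For identity~(2), the element $\cpol{\alpha}\bigl((x\vee x\fn)(y\vee y\fn)\bigr)$ need not be $1$; it lies in $F_{\mathbf A}$, hence its negation lies in $J_{\mathbf A}$, and that is what gives the inequality. Your verification of~(3) is fine.

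\textbf{Converse direction.} Restricting to subdirectly irreducible $\mathbf A$ is a legitimate variant of the paper's argument, which instead works with an arbitrary $\mathbf A$, fixes $x\neq 1$, and uses Zorn's Lemma to pick a maximal normal filter $F$ avoiding $x$; the quotient $\mathbf A/F$ then plays the role of your SI algebra. In both versions the use of identity~(1) is the same: if neither $y\rres y\fn$ nor $y\fn\rres y$ lies in the relevant filter, Lemma~\ref{FILTER} together with the rule $a\vee b_1=1=a\vee b_2\Rightarrow a\vee b_1b_2=1$ yields a contradiction. The outcome is comparability $y\le y\fn$ or $y\fn\le y$ throughout. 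The key correction to your plan is to establish this comparability \emph{first}, and only then define the filter. Once every element is comparable with its negation, the set $K=\{a\vee a\fn: a\in A\}$ coincides with $\{a: a\fn\le a\}$, and identity~(2) shows this set is \emph{already} a normal filter (since $\cpol\alpha(uw)\fn\le\cpol\alpha(uw)$ forces $\cpol\alpha(uw)=\cpol\alpha(uw)\vee\cpol\alpha(uw)\fn\in K$); no generation step is needed, and properness is immediate from $0\fn=1\nleq 0$ in a nontrivial algebra. Identity~(3) then supplies $J\le K$ exactly as you outline. Your ``main obstacle'' is real but is handled precisely by the monolith argument you sketch (or, in the paper, by maximality of $F$ among filters missing $x$).
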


\begin{proof}
Assume $\mathcal{V}$ is perfectly generated. Let
$\mathbf{A}\in \mathcal{V}$ be perfect, and let $f_\mathbf
A\colon \mathbf A\rightarrow\bm 2$ be the homomorphism witnessing it.
\begin{enumerate}
\item Since $x\in F_{\mathbf A}$ or $x\fn\in F_{\mathbf A}$,
we have $\mathbf A\models (x\leq  x\fn$ or $x\fn\leq x)$.
All iterated conjugates of $1$ are equal to $1$, so we get
$\cpol{\alpha} (x\rres x\fn)=1$ or $\cpol{\beta} (x\fn\rres x)=1$, and therefore
$\cpol{\alpha}(x\rres x\fn)\vee \cpol{\beta}(x\fn\rres x)=1$
holds.

\item As $x\vee x\fn,y\vee y\fn\in F_{\mathbf A}$, we have
  $\cpol{\alpha}((x\vee x\fn)\cdot (y\vee y\fn))\in F_{\mathbf A}(1)$ which gives
  $\cpol{\alpha}((x\vee x\fn)\cdot (y\vee y\fn))\fn \leq\cpol{\alpha}((x\vee
  x\fn)\cdot (y\vee  y\fn))$ holds.

\item As $x\wedge x\fn \in J_{\mathbf A}$ and $y\vee y\fn\in
  F_{\mathbf A}$ for all $x,y\in A$ we get that $x\wedge x\fn \leq y\vee
  y\fn$ holds. 
\end{enumerate}

For the converse, assume $\mathcal{V}$ is nontrivial and satisfies (1)--(3).
We will show that every $\mathbf A\in \mathcal{V}$ is a subdirect product
of perfect algebras. Take $x\in A\setminus\{1\}$ and using Zorn Lemma 
find a maximal normal filter $F\subseteq A$ such that
$x\not\in F$.

First we show that $y\rres y\fn\in F$ or $y\fn\rres  y\in F$ holds for any
$y\in A$. Assume the contrary. Then $y\rres y\fn\notin F$ and
$y\fn\rres  y\not\in F$ for some $y\in F$, and
so $x\in F_{y\rres y\fn}$ and $x\in F_{y\fn\rres y}$. By Lemma~\ref{FILTER}
there exist $f_1,f_2\in F$ and  
$\cpol{\alpha_1},\dots,\cpol{\alpha_n},\cpol{\beta}_1,\dots,\cpol{\beta}_m\in
\mathrm{cPol}(\mathbf{A})$ such that 
$$
x\geq f_1\cdot \cpol{\alpha_1}(y\rres y\fn)\cdots\cpol{\alpha_n}(y\rres y\fn),
f_2\cdot \cpol{\beta}_1(y\fn\rres  y)\cdots\cpol{\beta}_m(y\fn\rres  y).
$$
Hence 
\begin{align*}
x &\geq  (f_1\cdot f_2\cdot \cpol{\alpha_1}(y\rres
         y\fn)\cdots\cpol{\alpha_n}(y\rres y\fn))\vee (f_1\cdot f_2\cdot
         \cpol{\beta}_1(y\fn\rres  y)\cdots\cpol{\beta}_m(y\fn\rres  y))\\ 
&\geq  f_1\cdot f_2\cdot (\cpol{\alpha_1}(y\rres
       y\fn)\cdots\cpol{\alpha_n}(y\rres y\fn)\vee  \cpol{\beta}_1(y\fn\rres
       y)\cdots\cpol{\beta}_m(y\fn\rres  y)) 
\end{align*}
The fact that
$x\vee y_1=1=x\vee y_2$ implies $x\vee (y_1\cdot y_2)=1$, and the
identity (1) yield  
$$
\cpol{\alpha_1}(y\rres y\fn)\cdots\cpol{\alpha_n}(y\rres y\fn)\vee
\cpol{\beta}_1(y\fn\rres  y)\cdots\cpol{\beta}_m(y\fn\rres  y) =1
$$
and hence $x\geq f_1\cdot f_2.$ Thus $x\in F$ which contradicts the choice of
$F$. Consequently $y\rres y\fn\in F$ or $y\fn\rres  y\in F$ holds and we obtain
$$
\mathbf A/F\models y\leq y\fn\text{ or } y\fn\leq y.\eqno{(*)}
$$
Let $K\dfeq \{a\vee a\fn\colon a\in A/F\}$. Take $u,w\in K$ so that
$u=a\vee a\fn$ and $w=b\vee b\fn$ for some $a,b\in A/F$.
Let $\cpol{\alpha}\in \mathrm{cPol}(\mathbf{A})$. Using (2) we get 
\begin{align*}
  \cpol{\alpha} (u\cdot w)\vee\cpol{\alpha} (u\cdot w)\fn
  &= \cpol{\alpha} ((a\vee a\fn)\cdot (b\vee b\fn))\vee\cpol{\alpha}
      ((a\vee a\fn)\cdot (b\vee b\fn))\fn\\ 
&= \cpol{\alpha} ((a\vee a\fn)\cdot (b\vee b\fn))\\
&= \cpol{\alpha} (u\cdot w).
\end{align*}
Hence $\cpol{\alpha} (u\cdot w)\in K$, so $K$ is a normal filter.
It is clear that $0\not\in K$, so $K$ is a proper filter, and
($*$) shows that $a\in K$ or $a\fn\in K$ for any $a\in A/F$.
Therefore $(\mathbf A/F)/K\cong\bm 2$. 

It remains to verify that if $u\in K$ and $w\notin K$, then $w\leq u$.
By ($*$) we have either $w\leq w\fn$ or  $w\fn\leq w$.
If $w\fn\leq w$, then $w = w\vee w\fn$ and so $w\in K$ contradicting its
choice. Hence $w\leq w\fn$.  On the other hand, $u\in K$ so
$u = a\vee a\fn$ for some $a\in A/F$ and so
$u\fn = (a\vee a\fn)\fn = a\fn\wedge a\fn\fn \leq a\vee a\fn = u$.
Therefore, using (3) we obtain $w = w\wedge w\fn\leq u\vee u\fn = u$
proving that $\mathbf A/F$ is perfect.

We have shown that for any $x \in A\setminus\{1\}$  there
is a normal filter $F\subseteq A$ such that $x\not\in F$ and $\mathbf A/F$
is a perfect FL$_w$-algebra. By the correspondence between normal filters and
congruences it follows that $\mathbf A$ is a subdirect product of perfect
algebras. 
\end{proof}

\begin{cor}\label{down-closed}
Any nontrivial subvariety of a perfectly generated variety is also perfectly generated.
\end {cor}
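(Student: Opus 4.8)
The plan is to obtain the corollary as an immediate consequence of the equational characterisation in Theorem~\ref{PerfGenEqBase}, using the elementary observation that conditions (1)--(3) of that theorem are inherited by subvarieties. First I would note that, although the three conditions quantify over conjugation polynomials $\cpol{\alpha},\cpol{\beta}\in\mathrm{cPol}(\mathbf{A})$ whose parameters range over the universe of the algebra at hand, they are stated as closure conditions on a class: they assert that every member $\mathbf{A}$ of the variety satisfies the displayed relations for all conjugation polynomials over that particular $\mathbf{A}$. Consequently, if a class satisfies (1)--(3), so does every subclass of it, in particular every subvariety.

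With that in hand the argument is short. Let $\mathcal{V}$ be a perfectly generated subvariety of $\var{FL_w}$ and let $\mathcal{W}\subseteq\mathcal{V}$ be a nontrivial subvariety. By the forward direction of Theorem~\ref{PerfGenEqBase}, $\mathcal{V}$ is nontrivial and satisfies (1)--(3); since every algebra in $\mathcal{W}$ lies in $\mathcal{V}$, the class $\mathcal{W}$ satisfies (1)--(3) as well. As $\mathcal{W}$ is nontrivial by hypothesis, the converse direction of Theorem~\ref{PerfGenEqBase} applies to $\mathcal{W}$ and yields that $\mathcal{W}$ is perfectly generated.

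I do not expect any real obstacle; the single point worth flagging is the role of the nontriviality hypothesis. It cannot be dropped, because the trivial variety is a subvariety of every perfectly generated variety yet is not perfectly generated: the trivial algebra admits no homomorphism onto $\bm 2$ and so fails Definition~\ref{perfect} (this is the same phenomenon recorded in the text preceding Lemma~\ref{HSPu-closure}). Thus the nontriviality clause in the corollary is precisely what licenses the appeal to Theorem~\ref{PerfGenEqBase}, and once this is observed the proof is complete.
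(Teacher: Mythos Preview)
Your argument is correct and is exactly the intended one: the corollary is stated without proof in the paper precisely because it follows immediately from the equational characterisation of Theorem~\ref{PerfGenEqBase}, the conditions (1)--(3) being inherited by any subclass. Your remark on the indispensability of the nontriviality hypothesis is also accurate and matches the paper's discussion.
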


The variety $\var{BA}$ of Boolean algebras is the unique atom of the lattice
$\Lambda(\var{FL_w})$ of all subvarieties of $\var{FL_w}$. 
Since the trivial variety is not perfectly generated,
$\var{BA}$ is the smallest perfectly
generated variety, hence the poset $\Lambda^+(\var{FL_w})$ of nontrivial
subvarieties of $\var{FL_w}$ is a sublattice of $\Lambda(\var{FL_w})$. 
We will now show that perfectly generated varieties form a lattice ideal in 
$\Lambda^+(\var{FL_w})$.

\begin{lemma}\label{si-perfect}
Let $\mathcal{V}$ be a variety of FL$_w$-algebras. Then
$\mathcal{V}$ is perfectly generated if and only if the class
$\mathcal{V}_{si}$ of all subdirectly
irreducible members of $\mathcal{V}$ consists of perfect algebras.  
\end{lemma}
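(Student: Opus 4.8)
The plan is to prove Lemma~\ref{si-perfect} by exploiting the general fact that a variety is generated by a subclass if and only if every subdirectly irreducible member lies in the variety generated by that subclass, together with Corollary~\ref{down-closed}. The forward direction is the substantive one. Suppose $\mathcal{V}$ is perfectly generated and let $\mathbf{S}\in\mathcal{V}_{si}$. Since $\mathbf{S}$ is subdirectly irreducible it has a monolith, i.e. a least nontrivial congruence, so it is \emph{not} a nontrivial subdirect product of proper quotients. On the other hand, running the argument from the proof of Theorem~\ref{PerfGenEqBase}: since $\mathcal{V}$ is perfectly generated it satisfies the identities (1)--(3) of that theorem (this is exactly the ``only if'' half already proved), and then that proof shows that for every $x\in S\setminus\{1\}$ there is a normal filter $F$ with $x\notin F$ and $\mathbf{S}/F$ perfect. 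Taking the intersection of these filters over all such $x$ gives the trivial filter, realising $\mathbf{S}$ as a subdirect product of perfect algebras; subdirect irreducibility then forces $\mathbf{S}$ itself (isomorphic to one of the factors $\mathbf{S}/F$, via the monolith) to be perfect.

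More carefully, I would argue as follows. Let $\mathbf{S}\in\mathcal{V}_{si}$ with monolith $\mu>\Delta$. Since $\mathcal{V}$ is perfectly generated, by Theorem~\ref{PerfGenEqBase} it satisfies identities (1)--(3), so the construction in the second half of that proof applies to $\mathbf{S}$: for each $x\in S$, $x\neq 1$, we obtain a maximal-with-respect-to-excluding-$x$ normal filter $F_x$ such that $\mathbf{S}/F_x$ is perfect. The congruences $\theta_x$ corresponding to the $F_x$ satisfy $\bigcap_x\theta_x=\Delta$ (because each $\theta_x$ collapses $1$ with nothing below the relevant generating element, more precisely the filter misses $x$ so the congruence does not identify $x$ with $1$; ranging over all $x\ne 1$ this intersection is the identity). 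Hence $\mathbf{S}$ embeds subdirectly into $\prod_x \mathbf{S}/\theta_x$. By subdirect irreducibility, some $\theta_x=\Delta$, so $\mathbf{S}\cong\mathbf{S}/\theta_x$ is perfect. This shows $\mathcal{V}_{si}$ consists of perfect algebras.

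For the converse, suppose $\mathcal{V}_{si}$ consists of perfect algebras. Every algebra in $\mathcal{V}$ is a subdirect product of subdirectly irreducible members of $\mathcal{V}$ (Birkhoff), hence a subdirect product of perfect algebras, hence belongs to $V(\mathcal{V}_{\pf})$; and trivially $V(\mathcal{V}_{\pf})\subseteq\mathcal{V}$. So $\mathcal{V}=V(\mathcal{V}_{\pf})$, i.e.\ $\mathcal{V}$ is perfectly generated. (One should note in passing that $\mathcal{V}$ is nontrivial: a variety all of whose subdirectly irreducibles are perfect, in particular, has a nontrivial subdirectly irreducible since the one-element algebra is not subdirectly irreducible in the usual convention — or if it is, it is not perfect, forcing $\mathcal{V}_{si}=\emptyset$ which is impossible for a variety; either way $\mathcal{V}$ is nontrivial, matching the hypothesis of Theorem~\ref{PerfGenEqBase}.)

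The main obstacle I anticipate is the bookkeeping in the forward direction: verifying cleanly that the family $\{\theta_x : x\in S\setminus\{1\}\}$ separates points, i.e.\ has trivial intersection, so that subdirect irreducibility can be invoked. This is essentially a restatement of what the proof of Theorem~\ref{PerfGenEqBase} already establishes (it concludes ``$\mathbf{A}$ is a subdirect product of perfect algebras''), so in the write-up I would simply cite that proof rather than repeat the filter manipulations; the only genuinely new observation is the trivial one that a subdirectly irreducible subdirect product of a family is isomorphic to one of the factors. Everything else is Birkhoff's subdirect representation theorem and an appeal to Theorem~\ref{PerfGenEqBase} (and, if one wants the slick phrasing, Corollary~\ref{down-closed} is not even needed).
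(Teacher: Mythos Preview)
Your argument is correct, but it takes a different route from the paper's proof. The paper dispatches the forward direction in one line via J\'onsson's Lemma: since $\var{FL_w}$ is congruence distributive (it has a lattice reduct), any subdirectly irreducible $\mathbf{S}\in\mathcal{V}=V(\mathcal{V}_{\pf})$ lies in $HSP_U(\mathcal{V}_{\pf})$, and then Lemma~\ref{HSPu-closure} immediately gives that $\mathbf{S}$ is perfect. Your approach instead invokes Theorem~\ref{PerfGenEqBase} to obtain the identities (1)--(3), reuses the second half of that proof to realise $\mathbf{S}$ as a subdirect product of perfect quotients, and then uses subdirect irreducibility to conclude that one of the quotient maps is injective. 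Both arguments are short; the paper's is slicker but leans on congruence distributivity through J\'onsson's Lemma, whereas yours is more self-contained within the machinery already developed in Section~\ref{sec:perfect-rls} and would go through in any setting where an analogue of Theorem~\ref{PerfGenEqBase} holds. Your parenthetical worry about nontriviality is harmless but unnecessary for the lemma as stated: the trivial variety has $\mathcal{V}_{si}=\emptyset$ and $\mathcal{V}_{\pf}=\emptyset$, so both sides of the biconditional hold vacuously.
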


\begin{proof}
The right-to-left direction is clear. For the left-to-right direction, if 
$\mathcal{V}$ is perfectly generated and $\mathbf{A}\in\mathcal{V}$ is
subdirectly irreducible, then by J\'onsson's Lemma,
$\mathbf{A}\in HSP_U(\mathcal{V}_{\pf})$. Then $\mathbf{A}$ is perfect,
by Lemma~\ref{HSPu-closure}.
\end{proof}

\begin{theorem}\label{PerfGenIdeal}
Perfectly generated varieties form an ideal in $\Lambda^+(\mathsf{FL_w})$.  
\end{theorem}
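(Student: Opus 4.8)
The plan is to show that the family of perfectly generated subvarieties of $\var{FL_w}$ is closed under (i) taking nontrivial subvarieties and (ii) taking finite (hence binary) joins; since $\var{BA}$ is the least perfectly generated variety and $\Lambda^+(\var{FL_w})$ is a lattice with this as its bottom, that suffices for being an ideal. Closure downward is already Corollary~\ref{down-closed}, so the real work is the join.

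For the join, I would argue via subdirectly irreducibles using Lemma~\ref{si-perfect}. Let $\mathcal{V}_1,\mathcal{V}_2$ be perfectly generated subvarieties of $\var{FL_w}$ and put $\mathcal{W} = \mathcal{V}_1\vee\mathcal{V}_2 = V(\mathcal{V}_1\cup\mathcal{V}_2)$. By Lemma~\ref{si-perfect} it is enough to check that every subdirectly irreducible $\mathbf{A}\in\mathcal{W}$ is perfect. Now $\mathcal{W} = V(\mathcal{V}_1\cup\mathcal{V}_2)$, so $\mathbf{A}\in HSP(\mathcal{V}_1\cup\mathcal{V}_2)$, and since $\mathbf{A}$ is subdirectly irreducible, J\'onsson's Lemma (or rather the standard consequence for $\mathbf{A}\in HSP(\mathcal{K})$ with $\mathcal{K}$ a union of two varieties) gives $\mathbf{A}\in HSP_U(\mathcal{V}_1\cup\mathcal{V}_2)$; an ultraproduct of a family drawn from $\mathcal{V}_1\cup\mathcal{V}_2$ lies in $\mathcal{V}_1$ or in $\mathcal{V}_2$ (as each is closed under ultraproducts and $\mathcal{V}_1\cup\mathcal{V}_2$ is the union of two ultra-closed classes), so in fact $\mathbf{A}\in HSP_U(\mathcal{V}_i) = \mathcal{V}_i$ for some $i$. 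Being subdirectly irreducible, $\mathbf{A}\in (\mathcal{V}_i)_{si}$, which consists of perfect algebras by Lemma~\ref{si-perfect} applied to $\mathcal{V}_i$. Hence $\mathbf{A}$ is perfect, and $\mathcal{W}$ is perfectly generated. An alternative, perhaps cleaner, route is to invoke Theorem~\ref{PerfGenEqBase}: each $\mathcal{V}_i$ satisfies the identity schemes (1)--(3), these are just (schemes of) identities, so their join satisfies them too, and $\mathcal{W}$ is nontrivial; Theorem~\ref{PerfGenEqBase} then immediately yields that $\mathcal{W}$ is perfectly generated. I would present this second argument as the main one, since it avoids the J\'onsson-type bookkeeping entirely, and note the first as a remark.

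Putting the pieces together: $\var{BA}$ is perfectly generated and is below every perfectly generated variety (indeed below every nontrivial subvariety of $\var{FL_w}$), the class is downward closed among nontrivial subvarieties by Corollary~\ref{down-closed}, and it is closed under binary joins by the argument above; a sub-poset of a lattice that is downward closed and closed under finite joins and contains the bottom is an ideal. The one point needing a word of care is that $\Lambda^+(\var{FL_w})$, not $\Lambda(\var{FL_w})$, is the ambient lattice: the trivial variety is excluded precisely because it is not perfect, and joins in $\Lambda^+(\var{FL_w})$ agree with joins in $\Lambda(\var{FL_w})$ (the join of two nontrivial varieties is nontrivial), so no subtlety arises there. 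I expect the only genuinely delicate step to be the verification that the identity schemes of Theorem~\ref{PerfGenEqBase} are preserved under join — but since for each fixed pair $\cpol{\alpha},\cpol{\beta}$ of conjugation polynomials these are honest identities in the language, and a variety satisfies an identity iff all its members do, this is routine: if $\mathcal{V}_1$ and $\mathcal{V}_2$ both satisfy a given identity then so does $V(\mathcal{V}_1\cup\mathcal{V}_2)$, as identities are preserved by $H$, $S$ and $P$.
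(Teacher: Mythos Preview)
Your proposal is correct, and in fact your first argument is exactly the paper's proof: the paper simply invokes Corollary~\ref{down-closed}, Lemma~\ref{si-perfect}, and the J\'onsson-Lemma consequence $(\mathcal{V}_1\vee\mathcal{V}_2)_{si} = (\mathcal{V}_1)_{si}\cup(\mathcal{V}_2)_{si}$ for congruence-distributive varieties, without spelling out the ultraproduct bookkeeping you supply. Your second route, via the equational characterisation of Theorem~\ref{PerfGenEqBase}, is a genuinely different but equally valid argument that the paper does not use; it trades the appeal to J\'onsson's Lemma for the observation that the schemes (1)--(3) are (once the conjugation parameters are treated as extra variables) ordinary identities and hence preserved under $H$, $S$, $P$. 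Either works; the paper's choice has the virtue of not needing to unpack what ``satisfies the identities for all $\cpol{\alpha},\cpol{\beta}\in\mathrm{cPol}(\mathbf{A})$'' means as an equational scheme, while yours avoids any reliance on congruence distributivity.
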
  

\begin{proof}
By combining Corollary~\ref{down-closed}, Lemma~\ref{si-perfect}, and the fact
that in congruence distributive varieties
$(\mathcal{V}_1\vee\mathcal{V}_2)_{si} =
(\mathcal{V}_1)_{si}\cup(\mathcal{V}_2)_{si}$.
\end{proof}

\section{Perfect pseudo MV-algebras}\label{sec:perfect-psmv}

Di Nola, Lettieri~\cite{DL94} showed that perfect MV-algebras are
categorically equivalent to Abelian $\ell$-groups. Di Nola, Dvure\v{c}enskij,
Tsinakis~\cite{DDT08} generalised this result, showing that
symmetric pseudo MV-algebras are categorically equivalent to $\ell$-groups.
In this section we show that
perfect pseudo MV-algebras are categorically equivalent to $\ell$-groups
with a distinguished automorphism, obtaining the results mentioned above as
corollaries. 

The algebras we will work with in this section, and indeed in the rest of the
article, are algebras $\mathcal K (\mathbf L,\lambda)$, constructed out of
an $\ell$-group $\mathbf{L}$ and an automorphism
$\lambda\colon\mathbf{L}\rightarrow\mathbf{L}$. Before we formally define them,
here are the quick-start assembling instructions. 
Take any $\ell$-group $\mathbf{L}$ and an automorphism $\lambda$. Throw away
everything outside $L^-\cup L^+$, take $L^-$ and $L^+$ apart, put $L^-$ on top of $L^+$,
define the operations using $\lambda$ to give $L^+$ a twist. See
Figure~\ref{pic:kite}.

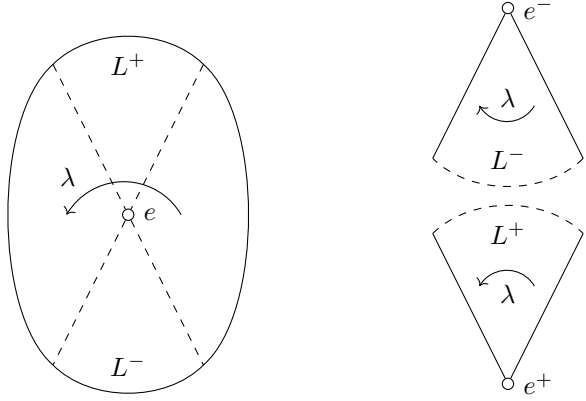
\begin{figure}
\begin{tikzpicture}
\node[odot, label=right:$e$] (v0) at (0,0) {};
\node[coordinate] (v1) at (-1,2) {};
\node[coordinate] (v2) at (1,2) {};
\node[coordinate] (v3) at (-1,-2) {};
\node[coordinate] (v4) at (1,-2) {};
\node (L-) at (0,-2) {$L^-$};
\node (L+) at (0,2) {$L^+$};
\node (l) at (-0.8,0.5) {$\lambda$};
\draw[dashed] (v1)--(v0)--(v4) (v2)--(v0)--(v3);
\draw (v1) .. controls (-0.5,2.5) and (0.5,2.5) .. (v2)
.. controls (1.8,1.3) and (1.8,-1.3) .. (v4)
.. controls (0.5,-2.5) and (-0.5,-2.5) .. (v3)
.. controls (-1.8,-1.3) and (-1.8,1.3) .. cycle;
\draw[->] (20pt,0) arc[start angle=30, end angle=150, radius=25pt]; 
\end{tikzpicture}  
\hspace{2cm}
\begin{tikzpicture}
\node[odot, label=right:$e^+$] (vb) at (0,-2.5) {};
\node[coordinate] (v1) at (-1,-0.5) {};
\node[coordinate] (v2) at (1,-0.5) {};
\node[odot, label=right:$e^-$] (vt) at (0,2.5) {};
\node[coordinate] (v3) at (-1,0.5) {};
\node[coordinate] (v4) at (1,0.5) {};
\node (L-) at (0,0.5) {$L^-$};
\node (L+) at (0,-0.5) {$L^+$};
\node (l1) at (0,1.3) {$\lambda$};
\node (l2) at (0,-1.3) {$\lambda$};
\draw (vb)--(v1) (vb)--(v2);
\draw (vt)--(v3) (vt)--(v4);
\draw[dashed] (v1) .. controls (-0.5,0) and (0.5,0) .. (v2)
(v4).. controls (0.5,0) and (-0.5,0) .. (v3);
\draw[->] (10pt,-1.2) arc[start angle=30, end angle=150, radius=12pt];
\draw[->] (10pt,1.2) arc[start angle=-30, end angle=-150, radius=12pt]; 
\end{tikzpicture}  
\caption{Constructing $\mathcal{K}(\mathbf{L},\lambda)$.}\label{pic:kite}
\end{figure}

\begin{defin}\label{gen-kite}
Let $\mathbf L$ be an $\ell$-group
and $\lambda\colon \mathbf L\rightarrow\mathbf L$ be an automorphism. We
define the algebra
$$
\mathcal K (\mathbf L,\lambda) \dfeq (L^-\uplus
L^+;\wedge,\vee,\odot,\lres,\rres ,0,1)
$$
where $L^-\uplus L^+$ is a disjoint union, 
$0\dfeq e \in L^+$, $1\dfeq e\in L^-$,
and the other operations are given by

\begin{align*}
x\wedge y &\dfeq \begin{cases}
x\wedge y \in L^- & \text{ if } x,y \in L^-,\\
x\in L^+  &  \text{ if } x\in L^+, y \in L^-\\
y\in L^+ & \text{ if }  x\in L^-, y \in L^+,\\
x\wedge y \in L^+ & \text{ if } x,y \in L^+,
\end{cases}\\
x\vee y &\dfeq \begin{cases}
x\vee y \in L^- &\text{ if }  x,y \in L^-,\\
y \in L^- & \text{ if }  x\in L^+, y \in L^-\\
x\in L^-& \text{ if }  x\in L^-, y \in L^+,\\
x\vee y \in L^+ & \text{ if }  x,y \in L^+,
\end{cases}\\
x\odot y &\dfeq \begin{cases}
x\cdot y \in L^- &\text{ if }  x,y \in L^-,\\
\lambda(x)\cdot y\vee e\in L^+ & \text{ if }  x\in L^-, y \in L^+\\
x\cdot y\vee e \in L^+& \text{ if }  x\in L^+, y \in L^-,\\
 e \in L^+ & \text{ if }  x,y \in L^+,
\end{cases}\\
x\lres y & \dfeq \begin{cases}
x^{-1}\cdot y \wedge e\in L^-&\text{ if }  x,y \in L^-,\\
e \in L^- & \text{ if }  x\in L^+, y \in L^-\\
\lambda (x)^{-1}\cdot y\vee e \in L^+& \text{ if }  x\in L^-, y \in L^+,\\
 x^{-1}\cdot y\wedge e\in L^- & \text{ if }  x,y \in L^+,
\end{cases}\\
y\rres x &\dfeq \begin{cases}
y\cdot x^{-1} \wedge e\in L^-&\text{ if }  x,y \in L^-,\\
e\in L^- & \text{ if }  x\in L^+, y \in L^-\\
 y\cdot x^{-1}\vee e\in L^+& \text{ if }  x\in L^-, y \in L^+,\\
 \lambda^{-1}(y\cdot x^{-1})\wedge e\in L^- & \text{ if }  x,y \in L^+,
\end{cases}
\end{align*}
\end{defin}

\begin{remark}\label{negations}
The negations $x\fn\dfeq  0\rres x$ and  $x\sn\dfeq  x\lres 0$
in $\mathcal K (\mathbf L,\lambda)$ are given by
\begin{align*}
x\fn &= \begin{cases}
 x^{-1} \in L^+&\text{ if }  x \in L^-,\\
\lambda^{-1} (x)^{-1}\in L^- & \text{ if } x\in L^+.
\end{cases}\\
x\sn &= \begin{cases}
\lambda (x)^{-1}\in L^+& \text{ if } x \in L^-,\\
 x^{-1}\in L^- & \text{ if } x\in L^+.
\end{cases}
\end{align*}
\end{remark}

% If $\mathbf{L}$ is non-Abelian, then $\mathcal{K}(\mathbf{L}, id_L)$ is
% non-commutative, of course, but the two negations coincide. Below we give two
% examples of non-commutative $\mathcal{K}(\mathbf{L}, \lambda)$ obtained from a
% commutative  $\mathbf{L}$. 

% \begin{example}
% Let $\mathbf{L} = \mathbb{Z}\times \mathbb{Z}$, and let $\lambda$ be given
% by $\lambda(i,j) = (j,i)$. Then, $\mathcal{K}(\mathbf{L}, \lambda)$  
% \end{example}  

\begin{theorem}\label{GenKite}
Let $\mathbf L$ be an $\ell$-group
and $\lambda\colon \mathbf L\rightarrow\mathbf L$ an automorphism. Then
$\mathcal K (\mathbf L,\lambda)$
is a perfect pseudo MV-algebra with $J_{\mathcal K(\mathbf L,\lambda)}=L^+$ and
$F_{\mathcal K (\mathbf L,\lambda)}=L^-$. Moreover:
\begin{enumerate}
\item $\mathcal K(\mathbf L,\lambda)$
is a symmetric perfect pseudo MV-algebra if and only if\/ $\lambda = id_L$.
\item $\mathcal K(\mathbf L,\lambda)$
is a perfect MV-algebra if and only if\/ $\mathbf{L}$ is Abelian and
$\lambda = id_L$.
\end{enumerate}
\end{theorem}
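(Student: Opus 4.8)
The plan is to verify the axioms of a pseudo MV-algebra directly on $\mathcal{K}(\mathbf{L},\lambda)$, exploiting the fact that every component operation on $L^-$ and on $L^+$ is built from $\ell$-group operations (twisted by $\lambda$ across the seam), and that $\mathbf{L}^-$ already satisfies \eqref{eq:can} and \eqref{eq:luk}. First I would record the order on $\mathcal{K}(\mathbf{L},\lambda)$: from the definitions of $\wedge$ and $\vee$ one reads off that $L^+$ sits entirely below $L^-$, that on $L^-$ the order agrees with the $\ell$-group order, and that on $L^+$ the order is the \emph{reverse} of the $\ell$-group order (so $e^+$ is the bottom element $0$ and $e^-$ is the top element $1$). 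This makes it transparent that $(L^-\uplus L^+;\wedge,\vee)$ is a lattice with $0$ and $1$ as stated, and that the map sending all of $L^+$ to $0$ and all of $L^-$ to $1$ is a lattice homomorphism onto $\bm 2$ with $J=L^+\le F=L^-$; once we know $\mathcal{K}(\mathbf{L},\lambda)$ is a pseudo MV-algebra, this map is automatically an FL$_w$-homomorphism because $L^-$ is a normal filter, giving the perfectness claim.

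Next I would check that $\mathcal{K}(\mathbf{L},\lambda)$ is an FL$_w$-algebra. Associativity and the monoid law $x\odot 1 = x = 1\odot x$ for $\odot$ require a short case analysis on how many of the arguments lie in $L^+$: the only interesting cases are the ``mixed'' products, where one uses that $\lambda$ is a group homomorphism together with the $\ell$-group identity $x\cdot y\vee e = (x\vee e\cdot y^{-1})\cdot y$ to push the $\vee e$ clipping around; the case of three $L^+$-arguments collapses to $e^+=0$, consistent with $0$ being absorbing. For residuation (RL3) I would verify $y\le x\lres z \Leftrightarrow xy\le z \Leftrightarrow x\le z\rres y$ again by cases; the seam cases are where $\lambda$ enters, and here one uses that $\lambda$ is an \emph{order} automorphism so that it commutes with $\wedge e$ and $\vee e$, reducing each case to the known residuation in $\mathbf{L}^-$ or to a triviality (e.g. anything in $L^+$ divided into something in $L^-$ gives $e^-=1$, the top). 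That $0\le x\le 1$ for all $x$ is immediate from the order description, so $\mathcal{K}(\mathbf{L},\lambda)$ is integral and $0$-bounded, i.e.\ an FL$_w$-algebra.

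Then I would establish the \L ukasiewicz identity \eqref{eq:luk}. The cleanest route is to use the formulas for the negations recorded in Remark~\ref{negations}: one checks that $x\fn{}\sn = x = x\sn{}\fn$ (this is where $\lambda$ being invertible is used), that $x\fn$ and $x\sn$ are order-reversing involutions, and that $x\rres y$ and $y\lres x$ can be rewritten via the negations as in the term-equivalence formulas from the Preliminaries ($x\lres y = y\oplus x\sn$ etc., where $x\oplus y \dfeq (x\fn\odot y\fn)\sn$). Given this, \eqref{eq:luk}, which is the noncommutative \L ukasiewicz axiom $x\rres(y\lres x) = x\vee y = (x\rres y)\lres x$, can be verified componentwise: on $L^-$ it is exactly \eqref{eq:luk} for $\mathbf{L}^-$; on $L^+$ and across the seam it reduces, after cancelling $\lambda$'s, either to \eqref{eq:luk} for $\mathbf{L}^-$ again or to the join formula in the lattice. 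Finally, for the two ``moreover'' clauses I would compute from Remark~\ref{negations} that $x\fn = x\sn$ for all $x$ forces $\lambda(x) = x$ on $L^-$ (hence, since $\lambda$ is a homomorphism and $L = L^-\cdot L^+$, on all of $\mathbf{L}$), giving (1); and commutativity of $\odot$ on the mixed products forces $\lambda(x)\cdot y = x\cdot y$ after one verifies $\mathbf L$ abelian, giving (2) together with the classical Chang/Mundici $\Gamma$ picture identifying $\mathcal K(\mathbf L,\mathrm{id})$ with a perfect MV-algebra.

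The main obstacle is the associativity of $\odot$ and the residuation law in the seam cases: these are where the $\lambda$-twist genuinely interacts with the $\vee e$/$\wedge e$ truncations, and one must be careful that the truncation is applied on the correct side. The key technical lemma doing the work is that, because $\lambda$ is an $\ell$-group automorphism, $\lambda$ commutes with $\wedge$ and $\vee$ and with inversion, so every truncated expression $\lambda(a)\cdot b\vee e$ may be manipulated exactly as $a\cdot b\vee e$ is in $\mathbf{L}^-$; once this is isolated, all remaining cases are bookkeeping and reduce to the already-established arithmetic of negative cones of $\ell$-groups.
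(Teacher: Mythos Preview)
Your approach is valid in principle but takes a considerably longer route than the paper. The paper avoids all the case-by-case verification by observing that $\mathcal{K}(\mathbf{L},\lambda)=\Gamma(\mathbf{L}\ltimes_{\mu}\mathbb{Z},(e,-1))$, where $\mathbf{L}\ltimes_{\mu}\mathbb{Z}$ is the antilexicographically ordered semidirect product with respect to $\mu\dfeq\lambda^{-1}$; since Dvure\v{c}enskij's result guarantees that $\Gamma$ of an $\ell$-group with strong unit is a pseudo MV-algebra, the main claim follows for free, and perfectness with $F=L^-$, $J=L^+$ is immediate from the shape of the interval $[(e,-1),(e,0)]$. Your direct verification of (RL1)--(RL3) and \eqref{eq:luk} works and is self-contained, but it hides the conceptual point that the kite is an interval in a semidirect product of $\ell$-groups. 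For items (1) and (2) your plan is close to the paper's; for (2) the paper uses the consequence $x\lres y = y\rres x$ of commutativity, evaluated at $y=0\in L^+$, rather than commutativity of the mixed products $x\odot y=y\odot x$ directly, which sidesteps having to cancel the $\vee e$ truncation.

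One correction to flag: the order on $L^+$ in $\mathcal{K}(\mathbf{L},\lambda)$ is \emph{not} the reverse of the $\ell$-group order. By Definition~\ref{gen-kite}, for $x,y\in L^+$ the kite meet and join are literally the $\ell$-group meet and join, so the order is inherited unchanged; $e^+$ is the bottom of the kite because $e$ is already the least element of $L^+$ in $\mathbf{L}$. Your conclusion is correct but the stated reason is not, and carrying the reversal through the residuation check would produce errors in the seam cases.
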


\begin{proof}
It is not difficult to verify that
$\mathcal K(\mathbf L,\lambda) =
\Gamma(\mathbf{L}\ltimes_{\mu}\mathbb Z,(e,-1))$ where  
$\mathbf{L}\ltimes_{\mu}\mathbb Z$ is the antilexicographically ordered
semidirect product of $\mathbf{L}$ and $\mathbb{Z}$ with respect to
the automorphism $\mu\dfeq\lambda^{-1}$. The group operations are explicitly given by
\begin{align*}
(x,m)\cdot(y,n)&\dfeq  (\lambda^{-n}(x)\cdot y,m+n),\\
(x,m)^{-1} &\dfeq  (\lambda^{m}(x^{-1}),-m),
\end{align*}
where we adopt the convention that $\lambda^0 = id$ (with this
convention $\lambda^{m}(x^{-1}) = \lambda^{-m}(x)$ whenever $m\neq 0$). 

The backward directions of (1) and (2) follow immediately
from the definitions. For the forward direction of (1), suppose
$\mathcal K(\mathbf L,\lambda)$ satisfies $x\fn = x\sn$. Then,
$\lambda(x) = x$ holds on $L^+\cup L^-$; hence
$\lambda(x) = x$ on $L$. For the forward direction of (2), suppose
$\mathcal K(\mathbf L,\lambda)$ is commutative.
Then by definition of $\odot$ the product in $\mathbf{L}$
commutes on $L^-$. This implies that it commutes on the
whole universe, so $\mathbf{L}$ is Abelian. Commutativity of
$\mathcal K(\mathbf L,\lambda)$ also implies
$x\ld y = y\rd x$, so taking any $x\in L^-$ and $y = e\in L^+$ we get
$$
x\ld e = \lambda(x)^{-1}\cdot e\vee e = \lambda(x)^{-1}\cdot e =
\lambda(x)^{-1} = \lambda(x^{-1})
$$
where the second equality holds because $\lambda$ fixes the negative cone setwise
and so $\lambda(x)^{-1}\in L^+$. Similarly, we obtain 
$$
e\rd x = e\cdot x^{-1}\vee e = e\cdot x^{-1} = x^{-1} 
$$
and thus $\lambda$ is the identity on the positive cone. Hence
$\lambda$ is the identity on the whole universe. 
\end{proof}

In any perfect pseudo MV-algebra $\mathbf A$ the normal filter $F_\mathbf A$
is a subuniverse of its residuated lattice reduct $\mathbf A\!^{rl}$; furthermore,
the subalgebra $\mathbf{F}_\mathbf{A}$ of $\mathbf A\!^{rl}$ 
is a cancellative IGMV-algebra. Since pseudo MV-algebras satisfy the identities
\begin{align*}
(x\wedge y)\sn\sn &=x\sn\sn \wedge y\sn\sn\\ 
(x\vee y)\sn\sn &=x\sn\sn \vee y\sn\sn\\ 
(x\cdot y)\sn\sn &=x\sn\sn \cdot y\sn\sn\\ 
x\fn\sn\sn &=x\sn\sn\fn
\end{align*}
the map $-\sn\sn$ is an automorphism of $\mathbf F_\mathbf A$.
Applying the functor $\ell$ we lift $-\sn\sn$ to an automorphism  
$$
\ell^\approx\colon \ell (\mathbf F_\mathbf A)\rightarrow
\ell (\mathbf F_\mathbf A)
$$
defined, obviously, as $\ell^\approx(-) \dfeq \ell(-\sn\sn)$.
Functoriality of $\ell$ also ensures that every algebra
$\mathbf{A}\in\var{\cigmv}$ comes with an ambient $\ell$-group
$\ell(\mathbf{A})$. In particular, for every perfect pseudo MV-algebra $\mathbf{A}$
we have an associated $\ell$-group $\ell(\mathbf{F}_{\mathbf{A}})$, so for an
element $x\in F_{\mathbf{A}}$, the notation $x^{-1}$ makes sense,
since the set $F_\mathbf A$ is a subset of both $A$ and $|\ell(\mathbf F_\mathbf A)|$.
Similarly, for $x\in J_{\mathbf{A}}$, we may write $(x\fn)^{-1}$ or 
$(x\sn)^{-1}$ with the inner operation taken in $\mathbf{A}$ and the outer
inverse in $\ell(\mathbf{F}_{\mathbf{A}})$.
We will use this notation frequently from now on.

\begin{theorem}\label{KitesRep}
Let $\mathbf A$ be a perfect pseudo MV-algebra. Then
$\mathbf A \cong\mathcal K(\ell (\mathbf F_\mathbf A),\ell^\approx)$.
\end{theorem}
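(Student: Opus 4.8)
The plan is to exhibit an explicit isomorphism $\varphi\colon \mathbf{A}\to\mathcal{K}(\ell(\mathbf{F}_{\mathbf{A}}),\ell^{\approx})$ whose underlying bijection is essentially the identity, once both universes are identified with the disjoint union $F_{\mathbf{A}}\uplus J_{\mathbf{A}}$. First I would note that, since $\mathbf{A}$ is perfect, $A = F_{\mathbf{A}}\uplus J_{\mathbf{A}}$ as a set, and by Proposition~\ref{prop:CNS} together with the structure theory recalled in the Preliminaries, $\mathbf{F}_{\mathbf{A}}$ is a cancellative IGMV-algebra, hence (being in $\var{\cigmv}$) the negative cone of $\ell(\mathbf{F}_{\mathbf{A}})$; thus $F_{\mathbf{A}}$ is identified with $L^-$ where $L = \ell(\mathbf{F}_{\mathbf{A}})$. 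The key observation is that every element of $J_{\mathbf{A}}$ is of the form $x\fn$ for a unique $x\in F_{\mathbf{A}}$: indeed, $x\mapsto x\fn$ maps $F_{\mathbf{A}}$ into $J_{\mathbf{A}}$, and using $x\fn{}\sn = x$ and $x\sn{}\fn = x$ (valid in all pseudo MV-algebras) one checks it is a bijection of $F_{\mathbf{A}}$ onto $J_{\mathbf{A}}$. This pins down the bijection $\varphi$: it is the identity on $F_{\mathbf{A}} = L^-$, and on $J_{\mathbf{A}}$ it sends $x\fn$ (with $x\in F_{\mathbf{A}}$) to the element $(x\fn)^{-1}\in L^+$, where the inverse is computed in $L$. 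Equivalently, $\varphi$ sends each $a\in J_{\mathbf{A}}$ to $(a\sn)^{-1}\vee e$ on the $L^+$-side; I would fix one such description and stick with it.

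Next I would verify that $\varphi$ transports the pseudo MV operations of $\mathbf{A}$ to the operations of $\mathcal{K}(L,\ell^{\approx})$ given in Definition~\ref{gen-kite}. The natural route is to pass through Dvure\v{c}enskij's representation: by Theorem of~\cite{Dvu02}, $\mathbf{A}\cong\Gamma(\mathbf{H},u^{-1})$ for some $\ell$-group $\mathbf{H}$ with strong unit $u$, and since $\mathbf{A}$ is perfect one shows that $\mathbf{H}$ must split as $\mathbf{H}\cong\mathbf{L}\ltimes_{\mu}\mathbb{Z}$ antilexicographically, with $u = (e,1)$ and $u^{-1}=(e,-1)$ — the point being that the unique homomorphism $h_{\mathbf{A}}\colon\mathbf{A}\to\bm{2}$ corresponds to the projection $\mathbf{H}\to\mathbb{Z}$, so $F_{\mathbf{A}}$ sits in the $0$-th copy and $J_{\mathbf{A}}$ in the $(-1)$-th copy of $L$ inside $\Gamma(\mathbf{H},u^{-1})$. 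Identifying $\mathbf{L}$ with the kernel of that projection and reading off the conjugating automorphism $\mu$ from the semidirect product, one gets $\mathbf{H}\cong\ell(\mathbf{F}_{\mathbf{A}})\ltimes_{\mu}\mathbb{Z}$ with $\mu$ determined by how $u$-conjugation acts, and then $\lambda\dfeq\mu^{-1}$ is recovered intrinsically as the map induced by $-\sn\sn$ on $F_{\mathbf{A}}$, i.e. $\lambda = \ell^{\approx}$. The computation in the proof of Theorem~\ref{GenKite} already shows $\mathcal{K}(\mathbf{L},\lambda) = \Gamma(\mathbf{L}\ltimes_{\mu}\mathbb{Z},(e,-1))$, so once the two antilexicographic semidirect products are matched the isomorphism $\mathbf{A}\cong\mathcal{K}(\ell(\mathbf{F}_{\mathbf{A}}),\ell^{\approx})$ follows.

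Alternatively — and this is probably cleaner to write since it avoids hauling in the full $\Gamma$-machinery — I would check the operation-by-operation agreement directly. On the $F_{\mathbf{A}}\times F_{\mathbf{A}}$ block everything is immediate because $\mathbf{F}_{\mathbf{A}} = \mathbf{L}^-$ by construction of $\ell$. For the mixed blocks one uses the term-equivalence formulas recalled in the Preliminaries (expressing $\odot$, $\lres$, $\rres$, $\wedge$, $\vee$ via $\oplus$ and the negations) together with the De Morgan identities and the four pseudo MV identities listed before Theorem~\ref{KitesRep} which guarantee $-\sn\sn$ is a homomorphism. The recurring mechanism is: an element $a\in J_{\mathbf{A}}$ gets rewritten as $b\fn$ with $b = a\sn\in F_{\mathbf{A}}$, a product or residual involving $a$ is pushed through the negation using $x\fn\odot y = (x\oplus y\fn)\fn$ and friends, and the resulting expression in $F_{\mathbf{A}}$ is interpreted in $\ell(\mathbf{F}_{\mathbf{A}})$, where $-\sn\sn$ becomes the automorphism $\ell^{\approx}$ exactly matching the occurrences of $\lambda$ in Definition~\ref{gen-kite}; the $\vee e$ or $\wedge e$ corrections appear because $\Gamma$-style truncation in $\mathbf{A}$ corresponds to meeting/joining with the unit after moving to $L$.

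The main obstacle, in either approach, is bookkeeping the automorphism: one must show that the single automorphism appearing in the kite is $\ell^{\approx} = \ell(-\sn\sn)$ and not some conjugate or inverse of it, and that it acts consistently across the $L^-\to L^+$ block of $\odot$, the $L^-\to L^+$ block of $\lres$, and the $L^+\to L^+$ block of $\rres$. This amounts to carefully tracking whether $-\sn\sn$, $-\fn{}\fn$, or their inverses govern each passage between the two copies of $L$, and reconciling the two different "twisting" conventions — the one hidden in $x\fn$ versus $x\sn$ on $J_{\mathbf{A}}$, and the one built into the semidirect product $\ltimes_{\mu}$ with $\mu = \lambda^{-1}$. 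I expect the cleanest way to nail this down is to first prove the auxiliary identities $x\sn = \lambda(x^{-1})$ and $x\fn = \lambda^{-1}(x)^{-1}$ on $F_{\mathbf{A}}$ (matching Remark~\ref{negations}), deduce from them that $\varphi$ intertwines the negations, and then let the term-equivalence formulas propagate correctness to all the remaining operations; closure of the class of perfect pseudo MV-algebras and uniqueness of $h_{\mathbf{A}}$ (Lemma~\ref{hom-unique}) ensure there are no hidden choices to worry about.
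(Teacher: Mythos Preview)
Your proposal is correct, and your ``alternative'' direct-verification route is exactly what the paper does. The paper defines the same bijection (identity on $F_{\mathbf{A}}$, and $x\mapsto (x\sn)^{-1}$ on $J_{\mathbf{A}}$), first verifies that it preserves both negations, then verifies that it preserves $\odot$ by a four-case analysis on $F_{\mathbf{A}}\times F_{\mathbf{A}}$, $F_{\mathbf{A}}\times J_{\mathbf{A}}$, $J_{\mathbf{A}}\times F_{\mathbf{A}}$, $J_{\mathbf{A}}\times J_{\mathbf{A}}$, and concludes by term-equivalence; the $\vee e$ in your description of $\varphi$ on $J_{\mathbf{A}}$ is harmless but redundant since $(a\sn)^{-1}$ already lies in $L^+$.

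Your first route via $\Gamma$ is a genuinely different argument that the paper does not take. It would work, but the step ``since $\mathbf{A}$ is perfect one shows that $\mathbf{H}$ must split as $\mathbf{L}\ltimes_{\mu}\mathbb{Z}$ antilexicographically'' is doing real work that you would have to supply: one needs to show the convex $\ell$-subgroup corresponding to $F_{\mathbf{A}}$ is normal with cyclic quotient generated by the image of $u$, and that the short exact sequence splits in the right ordered sense. The direct route avoids all of this by never leaving $\mathbf{A}$ and $\ell(\mathbf{F}_{\mathbf{A}})$, which is why the paper prefers it; conversely, the $\Gamma$ route has the conceptual payoff of explaining \emph{why} the kite formula arises, since Theorem~\ref{GenKite} already identifies $\mathcal{K}(\mathbf{L},\lambda)$ with $\Gamma(\mathbf{L}\ltimes_{\mu}\mathbb{Z},(e,-1))$.
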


\begin{proof}
Define a map  
$\Omega\colon A\rightarrow |\mathcal K(\ell (\mathbf F_\mathbf
A),\ell^\approx)|=\ell (\mathbf F_\mathbf A)^+\uplus\ell (\mathbf F_\mathbf A)^-$
by putting
$$
\Omega (x) =\begin{cases}
 x & \text{ if }  x \in F_\mathbf A\\
 (x\sn)^{-1} & \text{ if }  x \in J_\mathbf A.
\end{cases}
$$
We will show that $\Omega$ is an isomorphism. 

First we prove that $\Omega$ preserves negations.
If $x\in F_\mathbf A$, then $x\fn,x\sn\in J_\mathbf A$ and we have
\begin{align*}
\Omega (x\fn) &= (x\fn\sn)^{-1} = x^{-1} = \Omega (x)^{-1} = \Omega (x)\fn, \\
\Omega (x\sn) &= (x\sn\sn)^{-1} = \ell^\approx(x)^{-1} =
                \ell^\approx(\Omega(x))^{-1} = \Omega(x)\sn.
\end{align*}
If $x\in J_\mathbf A$, then $x\fn,x\sn\in F_\mathbf A$ and
\begin{align*}
\Omega
(x\fn) &= x\fn = x\sn\fn\fn = ((\ell^{\approx})^{-1}((x\sn)^{-1}))^{-1} =
((\ell^{\approx})^{-1}(\Omega (x)))^{-1} = \Omega (x)\fn,\\
\Omega(x\sn) &= x\sn = (x\sn)^{-1-1} = \Omega (x)^{-1} = \Omega (x)\sn.
\end{align*}
Next we prove that $\Omega$ preserves products. Recall that products in
$\mathbf F_\mathbf A$ and $\ell(\mathbf F_\mathbf A)$ are denoted
by $\cdot$ (and they coincide on $F_A$), whereas products in  $\mathcal K(\ell (\mathbf
F_\mathbf A),\ell^\approx)$ are denoted by $\odot$. 
Taking any $x,y\in A$, we have four cases to deal with:
\begin{itemize}
\item[(i)] If $x,y\in F_\mathbf A$ then $x\cdot y\in F_\mathbf A$ and hence
$$
\Omega (x\cdot y)=x\cdot y =\Omega(x)\odot\Omega (y)
$$
holds.

\item[(ii)] If $x\in F_\mathbf A$ and $y\in J_\mathbf A$ then
$x\cdot y\in J_\mathbf A$.
To begin with assume that $y\fn\leq x$, and so $y\sn\leq x\sn\sn$. Then 
\begin{align*}
 (x\cdot y)\sn\cdot x\sn\sn
&= (x\sn\oplus y\sn)\cdot x\sn\sn\\
&= (x\sn\sn\fn\oplus y\sn)\cdot x\sn\sn=x\sn\sn\wedge y\sn=y\sn
\end{align*}
and therefore, taking inverses in $\ell(\mathbf{F}_\mathbf{A})$ we get
$$
((x\cdot y)\sn)^{-1} = x\sn\sn \cdot (y\sn)^{-1}.
$$
Since $x\sn\sn,y\sn,(x\cdot y)\sn\in F_\mathbf A\subseteq \ell (\mathbf
F_\mathbf A)$ we obtain 
\begin{align*}
\Omega (x\cdot y) &= ((x\cdot y)\sn)^{-1} =(x\sn\sn)\cdot (y\sn)^{-1}\\
 &= \ell^\approx\Omega (x)\cdot \Omega (y) =\Omega (x)\odot \Omega (y).
\end{align*} 
The map $\Omega$ trivially preserves lattice operations and products for elements
of $F_\mathbf A$, so because $x,y\fn\in F_\mathbf A$, and obviously
$y\fn\leq x\vee y\fn$, we get in full generality: 
\begin{align*}
\Omega (x\cdot y) &= \Omega((x\vee y\fn)\cdot y)=\Omega((x\vee y\fn))\odot \Omega(y)\\
 &= (\Omega(x)\vee \Omega(y)\fn)\odot \Omega(y)=\Omega(x)\odot\Omega (y).
\end{align*}

\item[(iii)] If $x\in J_\mathbf A$ and $y\in F_\mathbf A$ then $x\cdot y\in
  J_\mathbf A$.
Analogously to the proof of (ii), assume first that $x\sn\leq y$. Then
$$
y\cdot (x\cdot y)\sn = y \cdot (x\sn \oplus y\sn) = x\sn \wedge y= x\sn
$$
and therefore
$$
((x\cdot y)\sn)^{-1} = (x\sn)^{-1}\cdot y.
$$
Hence we obtain
\begin{align*}
\Omega (x\cdot y) &= ((x\cdot y)\sn)^{-1} = (x\sn)^{-1}\cdot y\\
&= \Omega (x)\odot\Omega (y).
\end{align*}
Trivially $x\sn\leq x\sn\vee y$ holds, hence we get in full generality
\begin{align*}
\Omega (x\cdot y) &= \Omega(x\cdot (x\sn\vee y))=\Omega(x)\cdot
                    (\Omega(x)\sn\vee\Omega (y))\\ 
 &= \Omega (x)\odot\Omega (y).
\end{align*}

\item[(iv)] If $x,y\in J_\mathbf A$ then $\Omega (x\cdot y)= \Omega (1)=1=\Omega
  (x)\odot\Omega (y)$. 
\end{itemize} 
Since $\Omega$ preserves the constants $0$ and $1$, the product and the
negations, it is a homomorphism. Bijectivity follows immediately from
the definition of $\Omega$. 
\end{proof}

\begin{lemma}\label{KiteHom}
Let $\mathbf L_1$ and $\mathbf L_2$ be $\ell$-groups and $\lambda_1\colon
\mathbf L_1\rightarrow\mathbf L_1$, $\lambda_2\colon \mathbf
L_2\rightarrow\mathbf L_2$ be automorphisms. Then any homomorphism $f\colon
\mathbf L_1\rightarrow\mathbf L_2$ such that $f\circ\lambda_1 =
\lambda_2\circ f$ induces the homomorphism 
$$
h_f\colon \mathcal{K}(\mathbf L_1,\lambda_1)\rightarrow
\mathcal{K}(\mathbf L_2,\lambda_2)
$$ 
where
$h_f = f|_{L_1^-\cup L_1^+}$. Moreover, every homomorphism
$h\colon \mathcal{K}(\mathbf L_1,\lambda_1)\rightarrow
\mathcal{K}(\mathbf L_2,\lambda_2)$ arises this way.
\end{lemma}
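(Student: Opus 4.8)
The plan is to split the claim into its two halves: first that any $\lambda$-commuting $\ell$-group homomorphism $f$ yields a kite homomorphism $h_f$, and second that every kite homomorphism arises this way. For the first half, I would take $h_f \dfeq f|_{L_1^-\cup L_1^+}$ and check that it is well-defined, i.e. that $f$ maps $L_1^-$ into $L_2^-$ and $L_1^+$ into $L_2^+$ (which is immediate since $\ell$-group homomorphisms preserve order and the identity), and then verify that $h_f$ respects each of the operations $\wedge,\vee,\odot,\lres,\rres$ together with the constants $0 = e\in L_1^+$ and $1 = e\in L_1^-$. This is a case analysis following the four-way case split in Definition~\ref{gen-kite}; in each case the operation in $\mathcal{K}(\mathbf{L}_i,\lambda_i)$ is built from the $\ell$-group operations of $\mathbf{L}_i$, the automorphism $\lambda_i$ (or its inverse), and $\vee e$ or $\wedge e$, all of which $f$ preserves — crucially the hypothesis $f\circ\lambda_1 = \lambda_2\circ f$ (and hence also $f\circ\lambda_1^{-1} = \lambda_2^{-1}\circ f$) is exactly what is needed to push $f$ past the twist. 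So this half is routine, if a little tedious.

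For the second half, I would start with an arbitrary homomorphism $h\colon \mathcal{K}(\mathbf{L}_1,\lambda_1)\rightarrow\mathcal{K}(\mathbf{L}_2,\lambda_2)$. By Theorem~\ref{GenKite} each kite is a perfect pseudo MV-algebra with $F = L^-$ and $J = L^+$; since $h$ is nontrivial-or-trivial and homomorphisms of perfect algebras commute with the canonical map to $\mathbf{2}$ (by uniqueness, Lemma~\ref{hom-unique}), $h$ maps $L_1^-$ into $L_2^-$ and $L_1^+$ into $L_2^+$. Now $\mathbf{F}_{\mathcal{K}(\mathbf{L}_i,\lambda_i)} = (L_i^-;\dots)$ is the negative cone of $\mathbf{L}_i$ as a $\var{\cigmv}$-algebra, and $h|_{L_1^-}$ is a $\var{\cigmv}$-homomorphism; applying the functor $\ell$ and using $\ell(\mathbf{L}_i^-)\cong\mathbf{L}_i$ gives a unique $\ell$-group homomorphism $f\colon\mathbf{L}_1\rightarrow\mathbf{L}_2$ extending $h|_{L_1^-}$. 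It remains to check that $f$ commutes with the automorphisms and that $h_f = h$. For the former, note that on the kite the automorphism $\ell^{\approx} = \ell(-\sn\sn)$ recovers $\lambda_i$ (this is essentially Theorem~\ref{KitesRep} applied to $\mathcal{K}(\mathbf{L}_i,\lambda_i)$, whose associated automorphism is $\lambda_i$), and $h$ preserves $-\sn\sn$; so $f$ intertwines $\lambda_1$ and $\lambda_2$ on $L_1^-$, hence on all of $L_1$ by Proposition~\ref{aut-commut} (after also checking agreement on $L_1^+$, which follows from $h$ preserving negations together with Remark~\ref{negations}). Finally $h_f = h$ holds on $L_1^-$ by construction, and on $L_1^+$ because both $h$ and $h_f$ preserve the negation $\fn$, which bijects $L_1^+$ onto $L_1^-$ via $x\mapsto x^{-1}$ (Remark~\ref{negations}), so agreement on $L_1^-$ forces agreement on $L_1^+$.

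I expect the main obstacle to be the second half, specifically the bookkeeping needed to identify $h|_{L_1^-}$ with (the negative-cone restriction of) a genuine $\ell$-group homomorphism and to verify the intertwining condition cleanly. The subtlety is that $f$ is only directly defined on $L_1^-$ via the adjunction, and its values on $L_1^+$ are then pinned down by the negation formulas of Remark~\ref{negations}; one must make sure these two descriptions are consistent and that the resulting $f$ really is a single $\ell$-group homomorphism — this is where Proposition~\ref{aut-commut} earns its keep, since it lets us conclude the global commutation from commutation on $L_1^-\cup L_1^+$ alone. The first half, by contrast, is a mechanical verification that I would present compactly, doing one or two representative cases (e.g.\ $x\in L^-, y\in L^+$ for $\odot$ and for $\lres$, where $\lambda$ and $\lambda^{-1}$ respectively appear) and remarking that the rest are analogous.
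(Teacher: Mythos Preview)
Your proposal is correct and follows essentially the same route as the paper's proof: the first half is a routine case check of Definition~\ref{gen-kite} (the paper likewise does only two illustrative cases), and for the second half both you and the paper restrict $h$ to $L_1^-$, apply the functor $\ell$ to obtain the $\ell$-group homomorphism, verify agreement on $L_1^+$ via the negation $\fn$ (Remark~\ref{negations}), and establish the intertwining $f\circ\lambda_1=\lambda_2\circ f$ from $h$ preserving $\sn\sn$ together with Proposition~\ref{aut-commut}. Your extra remark that $h$ must send $L_1^-$ into $L_2^-$ by the uniqueness in Lemma~\ref{hom-unique} is a point the paper leaves implicit, so your write-up is if anything slightly more careful there.
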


\begin{proof}
The main part follows by inspection of definitions of operations in
$\mathcal{K}(\mathbf L_i,\lambda_i)$ for $i=1,2$. We sketch two cases.
Let $x\in L^-$ and $y\in L^+$. Consider $x\wedge y$ in
$\mathcal{K}(\mathbf L_1,\lambda_1)$. We have 
$$
h_f(x\wedge y) = h_f(y) = f(y) =
  f(x) \wedge f(y) = h_f(x)\wedge h_f(y),
$$
where $f(x) \wedge f(y)$ is taken in $\mathcal{K}(\mathbf L_2,\lambda_2)$;
hence the apparently reversed lattice order. Next, consider $x\ld y$. We have
\begin{align*}
h_f(x\ld y) &= h_f(\lambda_1(x)^{-1}y\vee e)\\
&= f(\lambda_1(x)^{-1}y\vee e) \\ 
&= \lambda_2(f(x^{-1}))\cdot f(y)\vee e\\
&= h_f(x)\ld h_f(y).
\end{align*}

For the moreover part,
let $h\colon \mathcal{K}(\mathbf L_1,\lambda_1)\rightarrow
\mathcal{K}(\mathbf L_2,\lambda_2)$ be a homomorphism.
Then $h|_{L_1^-}\colon \mathbf{L}_1^-\rightarrow \mathbf{L}_2^-$ is a
homomorphism of $\var{\cigmv}$. Applying the functor $\ell$, we obtain
a homomorphism $\ell(h|_{L_1^-})\colon \ell(\mathbf{L}_1^-)\rightarrow
\ell(\mathbf{L}_2^-)$. Since $\ell(\mathbf{L}_i^-)\cong\mathbf{L}_i$ 
for $i=1,2$, there is a homomorphism
$g\colon \mathbf{L}_1\rightarrow\mathbf{L}_2$ such that
$g|_{L_1^-} = h|_{L_1^-}$. We need to show that $g|_{L_1^+} = h|_{L_1^+}$. 
Take any $y\in L_1^+$. Since negations are bijective in pseudo MV-algebras,
in $\mathcal{K}(\mathbf L_1,\lambda_1)$ we have
$y = x\fn$ for some $x$. By definition of operations in $\mathcal{K}(\mathbf
L_1,\lambda_1)$, we get further that $x\in L_1^-$ and $x^- = x^{-1}$. Hence,
$g(y) = g(x^{-1}) = g(x)^{-1} = h(x)^{-1} = h(x)\fn = h(x\fn) = h(y)$, as required.

It remains to show that $g$ commutes with the automorphisms $\lambda_1$, $\lambda_2$.
By Remark~\ref{negations}, in $\mathcal{K}(\mathbf{L}_1,\lambda_1)$ we have
$x\sn\sn = \lambda_1(x)$ for all $x\in L_1^-$, so taking any $x\in L_1^-$ we calculate
$(g\circ\lambda_1)(x) = g(x\sn\sn) = h(x\sn\sn) = h(x)\sn\sn =
g(x)\sn\sn = (\lambda_2\circ g)(x)$. By Proposition~\ref{aut-commut} we then have
$(g\circ\lambda_1)(x) = (\lambda_2\circ g)(x)$ for all $x\in L_1$, as required.
\end{proof}

The next nearly trivial fact will be important later, so we state it
explicitly.    

\begin{lemma}\label{KiteFilHom}
Let $f$, $\mathbf{L}_i$, $\lambda_i$, for $i=1,2$ be as in Lemma~\ref{KiteHom}.
For any $x\in J_{\mathcal{K}(\mathbf L_1,\lambda_1)}$, we have
$f(x) = f(x\sn)^{-1}$.
\end{lemma}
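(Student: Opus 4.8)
The plan is to unwind the definitions supplied by Lemma~\ref{KiteHom} and Remark~\ref{negations}. Fix $x\in J_{\mathcal{K}(\mathbf L_1,\lambda_1)}=L_1^+$. By Lemma~\ref{KiteHom} the homomorphism $f\colon\mathbf L_1\to\mathbf L_2$ restricts to $h_f$ on $L_1^-\cup L_1^+$, and it commutes with the automorphisms, i.e. $f\circ\lambda_1=\lambda_2\circ f$. The statement $f(x)=f(x\sn)^{-1}$ is to be read with the inner $\sn$ computed in $\mathcal{K}(\mathbf L_1,\lambda_1)$ and the outer inverse taken in $\mathbf L_2$.

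First I would compute $x\sn$ for $x\in L_1^+$. By Remark~\ref{negations}, for $x\in L^+$ we have $x\sn = x^{-1}\in L^-$, where this inverse is taken in $\mathbf L_1$. Hence $x\sn=x^{-1}\in L_1^-$, so that $f(x\sn)=f(x^{-1})=f(x)^{-1}$, the last equality because $f$ is an $\ell$-group homomorphism. Taking inverses in $\mathbf L_2$ then gives $f(x\sn)^{-1}=f(x)$, which is exactly the claim. So the proof is genuinely a one-line check once the right clause of Remark~\ref{negations} is invoked. I would present it compactly, making sure to note which inverses live in which group, since the element $x$ simultaneously sits in $L_1^+\subseteq|\mathbf L_1|$ and, via $h_f=f|_{L_1^-\cup L_1^+}$, gets mapped into $|\mathcal{K}(\mathbf L_2,\lambda_2)|=L_2^-\uplus L_2^+$.

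There is essentially no obstacle here; the only point requiring a moment's care is the overloading of notation. The expression ``$f(x)$'' uses $f$ as an $\ell$-group homomorphism on $|\mathbf L_1|$, while ``$x\sn$'' uses the pseudo MV-algebra structure of $\mathcal{K}(\mathbf L_1,\lambda_1)$; these are reconciled precisely because $h_f$ and $f$ agree on $L_1^-\cup L_1^+$ and because the universe of the kite is $L_1^-\uplus L_1^+$ as a set. Once that is acknowledged, the computation $f(x\sn)=f(x^{-1})=f(x)^{-1}$ is immediate, and inverting both sides in $\mathbf L_2$ finishes it. I would write the proof as a two-sentence paragraph: cite Remark~\ref{negations} for $x\sn=x^{-1}$, then use that $f$ is a group homomorphism.
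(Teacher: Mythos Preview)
Your proposal is correct and follows essentially the same route as the paper's own proof: identify $J_{\mathcal{K}(\mathbf L_1,\lambda_1)}=L_1^+$, use Remark~\ref{negations} to get $x\sn=x^{-1}$ for $x\in L_1^+$, and then compute $f(x\sn)^{-1}=f(x^{-1})^{-1}=f(x)$ using that $f$ is a group homomorphism. The paper's argument is just a terser version of yours, omitting the commentary on notational overloading.
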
  

\begin{proof}
By definition of $\mathcal{K}(\mathbf L_1,\lambda_1)$, we have
$J_{\mathcal{K}(\mathbf L_1,\lambda_1)} = L_1^+$.
Then, $f(x\sn)^{-1} = f(x^{-1})^{-1} = f(x^{-1-1}) = f(x)$. 
\end{proof}

Now we are ready for a proof of the categorical equivalence between
perfect pseudo MV-algebras and $\ell$-groups that we announced at the beginning
of the section. 

\begin{defin}\label{aut-l-groups}
We define $\mathsf{LGA}$ to be the category of $\ell$-groups with a
distinguished automorphism. The objects are algebras
$(\mathbf{L},\lambda)$ where $\mathbf{L}$ is an $\ell$-group and $\lambda$ is 
an automorphism of\/ $\mathbf{L}$. The morphisms are $\ell$-group homomorphisms
commuting with the distinguished automorphism.
\end{defin}

The category $\mathsf{LGA}$ is not a variety, but
it is obviously categorically equivalent to a variety of algebras
$(\mathbf{L},\lambda,\mu)$, where $\mathbf{L}$ is an $\ell$-group and
$\lambda$, $\mu$ are endomorphisms of $\mathbf{L}$ satisfying
$\lambda(\mu(x)) = x =  \mu(\lambda(x))$.

\begin{theorem}\label{cat-eqv-ppMV-LGA}
The categories $\var{\pfpmv}$ of perfect pseudo MV-algebras, and
$\mathsf{LGA}$ of $\ell$-groups with a distinguished automorphism, are
equivalent.    
\end{theorem}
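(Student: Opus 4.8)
The plan is to exhibit quasi-inverse functors between $\var{\pfpmv}$ and $\mathsf{LGA}$ using the machinery already assembled. In one direction, define $\mathcal{F}\colon \mathsf{LGA}\rightarrow \var{\pfpmv}$ on objects by $\mathcal{F}(\mathbf{L},\lambda) \dfeq \mathcal{K}(\mathbf{L},\lambda)$, which is a perfect pseudo MV-algebra by Theorem~\ref{GenKite}, and on morphisms by $\mathcal{F}(f) \dfeq h_f$, which is well defined and functorial by Lemma~\ref{KiteHom}. In the other direction, define $\mathcal{G}\colon \var{\pfpmv}\rightarrow \mathsf{LGA}$ on objects by $\mathcal{G}(\mathbf{A}) \dfeq (\ell(\mathbf{F}_{\mathbf{A}}),\ell^\approx)$, using the automorphism $\ell^\approx$ of $\ell(\mathbf{F}_{\mathbf{A}})$ constructed just before Theorem~\ref{KitesRep}; on a homomorphism $g\colon \mathbf{A}\rightarrow \mathbf{B}$, note that $g$ restricts to a $\var{\cigmv}$-homomorphism $g|_{F_{\mathbf{A}}}\colon \mathbf{F}_{\mathbf{A}}\rightarrow \mathbf{F}_{\mathbf{B}}$ (since homomorphisms preserve $h_{\mathbf{A}}$, hence map $F_{\mathbf{A}}$ into $F_{\mathbf{B}}$, and $\mathbf{F}_{\mathbf{A}}$ is a subalgebra of $\mathbf{A}^{rl}$), and set $\mathcal{G}(g) \dfeq \ell(g|_{F_{\mathbf{A}}})$. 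One must check that $\mathcal{G}(g)$ commutes with the distinguished automorphisms, i.e.\ $\ell(g|_{F_{\mathbf{A}}})\circ\ell^\approx_{\mathbf{A}} = \ell^\approx_{\mathbf{B}}\circ\ell(g|_{F_{\mathbf{A}}})$; this follows by applying the functor $\ell$ to the identity $g|_{F_{\mathbf{A}}}\circ({-}^{\sim\sim}) = ({-}^{\sim\sim})\circ g|_{F_{\mathbf{A}}}$ on negative cones, which in turn holds because $g$ preserves $\sn$. Functoriality of $\mathcal{G}$ is immediate from functoriality of $\ell$ and of restriction.

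Next I would establish the two natural isomorphisms. For $\mathcal{F}\circ\mathcal{G}\cong \mathrm{id}_{\var{\pfpmv}}$, Theorem~\ref{KitesRep} already supplies, for each perfect pseudo MV-algebra $\mathbf{A}$, an isomorphism $\Omega_{\mathbf{A}}\colon \mathbf{A}\rightarrow \mathcal{K}(\ell(\mathbf{F}_{\mathbf{A}}),\ell^\approx) = (\mathcal{F}\circ\mathcal{G})(\mathbf{A})$; I would verify naturality, i.e.\ that for $g\colon \mathbf{A}\rightarrow \mathbf{B}$ the square with $\Omega_{\mathbf{A}}$, $\Omega_{\mathbf{B}}$, $g$, and $h_{\mathcal{G}(g)} = \mathcal{G}(g)|_{F_{\mathbf{A}}\cup J_{\mathbf{A}}}$ commutes. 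On $F_{\mathbf{A}}$ this is just $g|_{F_{\mathbf{A}}}$ on both sides; on $J_{\mathbf{A}}$ one side is $x\mapsto \Omega_{\mathbf{B}}(g(x)) = (g(x)^{\sim})^{-1} = (g(x^{\sim}))^{-1}$ and the other is $x\mapsto \mathcal{G}(g)((x^{\sim})^{-1}) = (g(x^{\sim}))^{-1}$, using that $g$ preserves $\sn$ and that $\ell(g|_{F_{\mathbf{A}}})$ agrees with $g$ on $F_{\mathbf{A}}$ and preserves inverses --- essentially the content of Lemma~\ref{KiteFilHom}. For $\mathcal{G}\circ\mathcal{F}\cong \mathrm{id}_{\mathsf{LGA}}$, start from $(\mathcal{G}\circ\mathcal{F})(\mathbf{L},\lambda) = (\ell(\mathbf{F}_{\mathcal{K}(\mathbf{L},\lambda)}),\ell^\approx)$; by Theorem~\ref{GenKite}, $F_{\mathcal{K}(\mathbf{L},\lambda)} = L^-$ as a $\var{\cigmv}$-algebra, so $\ell(\mathbf{F}_{\mathcal{K}(\mathbf{L},\lambda)}) \cong \ell(\mathbf{L}^-) \cong \mathbf{L}$ canonically, and under this identification the automorphism $\ell^\approx = \ell({-}^{\sim\sim})$ corresponds to $\lambda$, since Remark~\ref{negations} gives $x^{\sim\sim} = \lambda(x)$ for $x\in L^-$. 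This yields an isomorphism $(\mathbf{L},\lambda)\rightarrow (\mathcal{G}\circ\mathcal{F})(\mathbf{L},\lambda)$ in $\mathsf{LGA}$, whose naturality I would check against Lemma~\ref{KiteHom} (every kite homomorphism comes from a unique $\lambda$-commuting $\ell$-group homomorphism, and $\mathcal{G}$ recovers exactly that homomorphism).

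The main obstacle I expect is bookkeeping around the automorphisms and the two layers of the equivalence $\ell\dashv {}^-$: one must be careful that $\ell^\approx_{\mathbf{A}}$, defined as $\ell({-}^{\sim\sim})$ on $\ell(\mathbf{F}_{\mathbf{A}})$, is genuinely natural in $\mathbf{A}$ and that the identifications $\ell(\mathbf{L}^-)\cong \mathbf{L}$ are compatible with $\lambda$ on the nose, not merely up to a further automorphism. Proposition~\ref{aut-commut} is the tool that lets one check commutation of homomorphisms with automorphisms only on $L^+\cup L^-$, which is where all the kite data lives, so the verification reduces to the cone; still, the argument that the correspondence of morphisms in Lemma~\ref{KiteHom} is inverse to the morphism part of $\mathcal{G}$ requires threading the uniqueness clause of that lemma through the functor $\ell$. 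Once these compatibilities are pinned down, the theorem follows, and the categorical equivalences of Di Nola--Lettieri (take $\lambda = \mathrm{id}$, $\mathbf{L}$ Abelian) and Di Nola--Dvure\v{c}enskij--Tsinakis (take $\lambda = \mathrm{id}$) drop out as the stated corollaries via Theorem~\ref{GenKite}(1)--(2).
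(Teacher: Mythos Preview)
Your proposal is correct, but the paper takes a shorter route. Rather than constructing an explicit quasi-inverse $\mathcal{G}$ and verifying both natural isomorphisms, the paper invokes the standard criterion that a functor is an equivalence if and only if it is full, faithful, and essentially surjective. It then observes that Lemma~\ref{KiteHom} already gives fullness and faithfulness of $\mathcal{K}(-)\colon\mathsf{LGA}\to\var{\pfpmv}$ (every kite homomorphism arises from a unique $\lambda$-commuting $\ell$-group homomorphism), while Theorem~\ref{KitesRep} gives essential surjectivity. That is the entire proof.

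Your approach buys you an explicit description of the inverse equivalence $\mathcal{G}(\mathbf{A}) = (\ell(\mathbf{F}_{\mathbf{A}}),\ell^\approx)$ together with the unit and counit, which is genuinely useful if one wants to transport constructions back and forth; the paper's approach sidesteps all the automorphism-compatibility bookkeeping you flag as the main obstacle (and which you handle correctly via Proposition~\ref{aut-commut} and Remark~\ref{negations}). The content is ultimately the same---the ``moreover'' clause of Lemma~\ref{KiteHom} is doing the work of your uniqueness/faithfulness check, and Theorem~\ref{KitesRep} is doing the work of your $\Omega_{\mathbf{A}}$---but the paper packages it more economically by not naming $\mathcal{G}$ at all.
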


\begin{proof}
Lemma~\ref{KiteHom} shows that $\mathcal{K}(-)$ is a faithful and full functor from
$\mathsf{LGA}$ to $\mathsf{\pfpmv}$.  
Theorem~\ref{KitesRep} shows that $\mathcal{K}(-)$ is essentially surjective.
\end{proof}  

By categorical equivalence between $\var{\cigmv}$ and $\var{LG}$, we
immediately obtain the next result.

\begin{cor}
The categories $\var{\pfpmv}$, and
$\var{\cigmv{}A}$ of cancellative IGMV-algebras with a distinguished automorphism, are
equivalent.
\end{cor}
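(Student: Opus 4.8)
The plan is to lift the categorical equivalence between $\var{LG}$ and $\var{\cigmv}$ recalled in Section~\ref{sec:prel} to the corresponding categories of algebras carrying a distinguished automorphism, and then to compose the result with Theorem~\ref{cat-eqv-ppMV-LGA}. The organising observation is that ``object equipped with a distinguished automorphism'' is a functor-category construction: writing $\mathbb{Z}$ for the one-object category whose endomorphism monoid is the additive group of integers, the functor category $[\mathbb{Z},\mathcal{C}]$ has as objects the pairs $(C,\alpha)$ with $\alpha$ an automorphism of $C\in\mathcal{C}$ (the image of $1\in\mathbb{Z}$, which is invertible), and as morphisms the $\mathcal{C}$-arrows commuting with the chosen automorphisms. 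Thus $\mathsf{LGA}=[\mathbb{Z},\var{LG}]$ by Definition~\ref{aut-l-groups}, and $\var{\cigmv{}A}=[\mathbb{Z},\var{\cigmv}]$ by the obvious analogue of that definition.

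Next I would invoke the elementary fact that post-composition with an equivalence of categories is again an equivalence: if $F\colon\mathcal{C}\rightarrow\mathcal{D}$ and $G\colon\mathcal{D}\rightarrow\mathcal{C}$ are quasi-inverse, with natural isomorphisms $\mathrm{id}_{\mathcal{C}}\cong GF$ and $FG\cong\mathrm{id}_{\mathcal{D}}$, then the induced functors $F_*\colon[\mathbb{Z},\mathcal{C}]\rightarrow[\mathbb{Z},\mathcal{D}]$ and $G_*\colon[\mathbb{Z},\mathcal{D}]\rightarrow[\mathbb{Z},\mathcal{C}]$ are again quasi-inverse, the requisite natural isomorphisms $G_*F_*\cong\mathrm{id}$ and $F_*G_*\cong\mathrm{id}$ being obtained from the given ones by whiskering. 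Applying this with $\mathcal{C}=\var{LG}$, $\mathcal{D}=\var{\cigmv}$, $F={}^-$ and $G=\ell$ gives $\mathsf{LGA}\simeq\var{\cigmv{}A}$; composing with the equivalence $\var{\pfpmv}\simeq\mathsf{LGA}$ of Theorem~\ref{cat-eqv-ppMV-LGA} then yields the Corollary.

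For readers who prefer explicit functors, I would alternatively describe the equivalence $\mathsf{LGA}\simeq\var{\cigmv{}A}$ directly: $(\mathbf{L},\lambda)\mapsto(\mathbf{L}^-,\lambda|_{L^-})$ with restriction to negative cones on morphisms in one direction, and $(\mathbf{A},\sigma)\mapsto(\ell(\mathbf{A}),\ell(\sigma))$ in the other, using that an $\ell$-group automorphism maps the negative cone onto itself and respects the residuated operations defined on it — so its restriction is a genuine $\var{\cigmv}$-automorphism — and that $\ell$ sends automorphisms to automorphisms by functoriality. One then checks that the two functors are mutually quasi-inverse via the isomorphisms $\ell(\mathbf{A})^-=\mathbf{A}$ and $\ell(\mathbf{L}^-)\cong\mathbf{L}$ already available for $\var{LG}$ and $\var{\cigmv}$. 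The only delicate point — which I would flag as the ``hard part'', though it is in fact automatic — is the compatibility of these two isomorphisms with the distinguished automorphisms, i.e.\ that $\ell(\mathbf{L}^-)\cong\mathbf{L}$ intertwines $\ell(\lambda|_{L^-})$ with $\lambda$, and likewise on the $\var{\cigmv}$ side. Each is an instance of the naturality of the isomorphisms $\ell((-)^-)\cong\mathrm{id}_{\var{LG}}$ and $(\ell(-))^-=\mathrm{id}_{\var{\cigmv}}$ applied to the morphisms $\lambda$ and $\sigma$; since the equivalence of Section~\ref{sec:prel} is already known to be natural, this comes for free, which is precisely why the Corollary follows immediately.
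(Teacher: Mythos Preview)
Your proof is correct and follows exactly the approach the paper indicates: the paper simply remarks, just before the corollary, that it follows immediately from Theorem~\ref{cat-eqv-ppMV-LGA} together with the categorical equivalence between $\var{\cigmv}$ and $\var{LG}$ recalled in Section~\ref{sec:prel}. Your functor-category packaging and explicit description of the lifted functors are a careful unpacking of that one-line justification, but the underlying argument is the same.
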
  

As we mentioned a few times already, 
Di Nola, Lettieri~\cite{DL94} showed that perfect MV-algebras are
categorically equivalent to Abelian $\ell$-groups, and 
Di Nola, Dvure\v{c}enskij, Tsinakis~\cite{DDT08} generalised the result
to an equivalence between symmetric perfect pseudo
MV-algebras and $\ell$-groups. Now we can obtain these results as corollaries.

\begin{cor}
The following pairs of categories are equivalent:
\begin{enumerate}
\item Symmetric perfect pseudo MV-algebras and $\ell$-groups. 
\item Perfect MV-algebras and Abelian $\ell$-groups.
\end{enumerate}
\end{cor}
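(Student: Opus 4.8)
The plan is to obtain both equivalences as restrictions of the equivalence $\mathcal{K}(-)\colon\mathsf{LGA}\to\var{\pfpmv}$ from Theorem~\ref{cat-eqv-ppMV-LGA}, using Theorem~\ref{GenKite} to recognise which objects on the $\mathsf{LGA}$ side correspond to the distinguished subclasses on the pseudo MV side. The underlying principle is that an equivalence $F\colon\mathcal{C}\to\mathcal{D}$ restricts, for every full subcategory $\mathcal{A}\subseteq\mathcal{C}$, to an equivalence between $\mathcal{A}$ and the full subcategory of $\mathcal{D}$ whose objects are isomorphic to some $F(a)$ with $a\in\mathcal{A}$; so the only work is to pin down the relevant full subcategories on both sides.

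For (1), first I would note that the objects $(\mathbf{L},id_L)$ span a full subcategory of $\mathsf{LGA}$ that is isomorphic to $\var{LG}$: the assignment $(\mathbf{L},id_L)\mapsto\mathbf{L}$ is bijective on objects, and a morphism $(\mathbf{L}_1,id_{L_1})\to(\mathbf{L}_2,id_{L_2})$ in $\mathsf{LGA}$ is nothing but an $\ell$-group homomorphism, the commutation condition $f\circ id_{L_1}=id_{L_2}\circ f$ being vacuous. Next, by Theorem~\ref{KitesRep} every perfect pseudo MV-algebra $\mathbf{A}$ is isomorphic to $\mathcal{K}(\ell(\mathbf{F}_\mathbf{A}),\ell^\approx)$, and since being symmetric is an isomorphism-invariant property, Theorem~\ref{GenKite}(1) tells us that the symmetric perfect pseudo MV-algebras are, up to isomorphism, exactly the kites $\mathcal{K}(\mathbf{L},id_L)$; equivalently, for symmetric $\mathbf{A}$ the automorphism $\ell^\approx=\ell(-\sn\sn)$ is the identity, because $x\sn=x\fn$ forces $x\sn\sn=x\sn\fn=x$. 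As the symmetric perfect pseudo MV-algebras form a full subcategory of $\var{\pfpmv}$ (a homomorphism of symmetric pseudo MV-algebras is just a homomorphism of pseudo MV-algebras), the restriction of $\mathcal{K}(-)$ yields an equivalence between them and the full subcategory on the objects $(\mathbf{L},id_L)$, which is $\cong\var{LG}$, as wanted.

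For (2) I would run the same argument with an extra Abelian clause. The full subcategory of $\mathsf{LGA}$ on objects $(\mathbf{L},id_L)$ with $\mathbf{L}$ Abelian is isomorphic, via the same functor, to the full subcategory of $\var{LG}$ on Abelian $\ell$-groups, i.e. to the category of Abelian $\ell$-groups. By Theorem~\ref{GenKite}(2) the kite $\mathcal{K}(\mathbf{L},\lambda)$ is a perfect MV-algebra precisely when $\mathbf{L}$ is Abelian and $\lambda=id_L$; combined with Theorem~\ref{KitesRep} and the isomorphism-invariance of commutativity, this identifies the perfect MV-algebras, up to isomorphism, with exactly these objects. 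Since perfect MV-algebras form a full subcategory of $\var{\pfpmv}$ (commutativity is a property, not extra structure, so MV-homomorphisms are pseudo MV-homomorphisms), restricting $\mathcal{K}(-)$ once more gives the equivalence between perfect MV-algebras and Abelian $\ell$-groups.

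I do not expect a genuine obstacle: the proof is essentially bookkeeping on top of Theorems~\ref{GenKite}, \ref{KitesRep} and~\ref{cat-eqv-ppMV-LGA}. The one point to be careful about is fullness, so that ``restrict the equivalence'' is justified: I would make explicit that each of the four classes involved (symmetric perfect pseudo MV-algebras, perfect MV-algebras, $\ell$-groups viewed inside $\mathsf{LGA}$ as the $(\mathbf{L},id_L)$, Abelian $\ell$-groups viewed the same way) really does cut out a full subcategory, and that the defining conditions of Theorem~\ref{GenKite}, namely symmetry and commutativity, are invariant under isomorphism, which is what lets one pass from the kite presentation $\mathcal{K}(\ell(\mathbf{F}_\mathbf{A}),\ell^\approx)$ back to the abstract algebra $\mathbf{A}$.
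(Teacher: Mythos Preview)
Your proposal is correct and follows the same approach as the paper, which simply invokes Theorems~\ref{GenKite} and~\ref{cat-eqv-ppMV-LGA} together with transitivity of categorical equivalence. You have merely unpacked the details of how the equivalence restricts to the relevant full subcategories, which is exactly what the paper leaves implicit.
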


\begin{proof}
By Theorems~\ref{GenKite} and~\ref{cat-eqv-ppMV-LGA},
and transitivity of categorical equivalence.
\end{proof}

\section{Kites and perfect pseudo MV-algebras}\label{sec:kites-and-ppmv} 
 
For any power $\mathbf{L}^B$ of an $\ell$-group $\mathbf{L}$,
a very natural automorphism $\lambda\colon \mathbf{L}^B\rightarrow
\mathbf{L}^B$ is induced by any bijection 
$\beta\colon B\rightarrow B$ by taking
$\lambda(x)(i)\dfeq x(\beta(i))$ for any $i\in B$.
By Theorem~\ref{GenKite}, $\mathcal{K}(\mathbf{L}^B,\lambda)$ is
a perfect pseudo MV-algebra, indeed, it is a particular case of a
\emph{kite} from Dvure\v{c}enskij, Kowalski~\cite{DK14}. 
Henceforth, kites will be our main focus, so
we now introduce some machinery to deal with them.

\begin{defin}\label{cycles}
A monounary algebra $\mathbf B=(B;\beta)$ where
$\beta$ is a bijection on $B$  will be called
a \emph{B-cycle}.
Homomorphisms of B-cycles are maps $f\colon \mathbf B\rightarrow
\mathbf C$ satisfying $f\circ \lambda^{\mathbf{B}}=\lambda^{\mathbf{C}}\circ f$.
Objects of the category $\var{BC}$
are B-cycles and arrows are homomorphisms.  
\end{defin} 

\begin{remark}
B-cycles are not a variety, but as we will often need $\beta^{-1}$,
we could have equivalently defined B-cycles as a variety of
bi-unary algebras $(B,\beta,\delta)$ satisfying
$\beta(\delta(x)) = x = \delta(\beta(x))$, and write $\beta^{-1}$ for $\delta$.
\end{remark}

Precisely this variety was used in Baldwin, Berman~\cite{BB75}
as the first example of a variety which has arbitrarily large finite subdirectly
irreducibles, but no infinite ones.
%We will view $\var{BC}$ as a variety in
%Lemma~\ref{BcVarLatt}. 

\begin{defin}\label{classical-kites}
Let $\mathbf{B} = (B;\beta)$ be a B-cycle and $\mathbf{L}$ and $\ell$-group. 
A \emph{kite} over $\mathbf{B}$ and $\mathbf{L}$ is the algebra
$$
\mathcal{K}_{\mathbf{B}}(\mathbf{L}) \dfeq
\mathcal{K}(\mathbf{L}^B,\lambda)
$$
where $\lambda\colon \mathbf{L}^B\rightarrow \mathbf{L}^B$ is the
automorphism given by $\lambda(x)(i) = x(\beta(i))$ for any $i\in B$.
\end{defin}  

Below we state the precise relationship between kites of
Definition~\ref{classical-kites} and kites in the sense of
Dvure\v{c}enskij, Kowalski~\cite{DK14},
leaving the verification to the interested reader.

\begin{prop}\label{kites-and-kites}
Let $\mathbf{L}$ be an $\ell$-group, and $\mathbf{B} = (B;\beta)$ a B-cycle.
Then $\mathcal{K}_{\mathbf{B}}(\mathbf{L})$
is isomorphic to $K^{\beta,id}_{B,B}(\mathbf{L})$ from~\cite{DK14}.
\end{prop}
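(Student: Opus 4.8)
The plan is to unwind both definitions and observe that the two algebras are built on visibly the same underlying set, so that the isomorphism will be, up to a harmless relabelling of coordinates, the identity map; the entire content is then a clause-by-clause comparison of the operation tables.

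First I would recall the construction of the kite $K^{\alpha,\beta}_{I,J}(\mathbf{L})$ from \cite{DK14}: its universe is a disjoint union of a power of $L^-$ indexed by one set and a power of $L^+$ indexed by another, with $1$ the constantly-$e$ tuple in the $L^-$-part and $0$ the constantly-$e$ tuple in the $L^+$-part; the lattice operations are computed coordinatewise within each half, with every element of the $L^-$-half lying above every element of the $L^+$-half; and the monoid operation and residuals, insofar as they mix the two halves, use the maps $\alpha$ and $\beta$ to reindex coordinates, one map governing the ``left'' mixed product and left residual, the other the ``right''. Specialising to the case where both index sets are $B$, one of the two maps is the given bijection $\beta$ and the other is $id_B$, I would write these tables out explicitly.

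Next, on the side of $\mathcal{K}_{\mathbf{B}}(\mathbf{L}) = \mathcal{K}(\mathbf{L}^B,\lambda)$ with $\lambda(x)(i) = x(\beta(i))$, I would use Definition~\ref{gen-kite} together with the fact that in $\mathbf{L}^B$ the group multiplication, inverse, and order are all coordinatewise; hence $(L^B)^- = (L^-)^B$, $(L^B)^+ = (L^+)^B$, $\lambda(x)^{-1}(i) = x(\beta(i))^{-1}$, and so on. Substituting these coordinatewise descriptions into the five clauses of Definition~\ref{gen-kite} turns each into a coordinatewise formula on $(L^-)^B \uplus (L^+)^B$, and I would then check, clause by clause, that it agrees with the corresponding clause of $K^{\beta,id}_{B,B}(\mathbf{L})$: for $\wedge$, $\vee$ and the constants $0,1$ the match is immediate, while for $\odot$, $\lres$ and $\rres$ the coordinate permutation induced by $\beta$ that enters through $\lambda$ (or, in the clauses where $\lambda^{-1}$ appears, through $\beta^{-1}$) matches the reindexing map $\beta$ in the \cite{DK14} kite, and the coordinates left untwisted match the $id$.

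The main obstacle is pure bookkeeping: reconciling the conventions of \cite{DK14} with those used here --- which half of the disjoint union is drawn on top, whether the \cite{DK14} kite is presented in the signature with $\oplus$ and the two negations (in which case Remark~\ref{negations} and the term-equivalence recalled in the Preliminaries are needed to pass to the signature with $\odot$, $\lres$, $\rres$), and, most delicately, which of the two index maps twists the left mixed product versus the right, and on which side --- possibly forcing a replacement of $\beta$ by $\beta^{-1}$ somewhere in the dictionary. Once the dictionary is pinned down correctly, every individual comparison reduces to a one-line coordinatewise identity in $\mathbf{L}$, which is why the verification can reasonably be left to the interested reader, as in the statement.
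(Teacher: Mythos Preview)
Your proposal is correct and is exactly the approach the paper has in mind: the paper states the proposition and explicitly leaves the verification to the interested reader, so the intended proof is precisely the clause-by-clause comparison you describe, with the only content being the bookkeeping of conventions between \cite{DK14} and Definition~\ref{gen-kite}.
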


\begin{example}\label{not-DvuKow}
Let $\mathbb{Q}$ be the additive $\ell$-group of the rationals. The map
$\lambda(x) = 2x$ is an automorphism of $\mathbb{Q}$, so
$\mathcal{K}(\mathbb{Q}, \lambda)$ is a perfect
pseudo MV-algebra. But $\mathcal{K}(\mathbb{Q}, \lambda)$
is not a kite, as Lemma~\ref{not-kite} will show.
\end{example}

\begin{lemma}\label{FilHom}
Let $\mathbf A$ and $\mathbf B$ be perfect pseudo MV-algebras and let
$f\colon\mathbf{F}_\mathbf A\rightarrow\mathbf F_\mathbf B$ be a homomorphism of
IGMV-algebras. If $f(x\sn\sn )=f(x)\sn\sn$ holds for all $x\in F_\mathbf A$,
then there exists a homomorphism $\overline{f}\colon \mathbf A\rightarrow\mathbf B$,
given by
$$
\overline{f}(x) = \begin{cases}
f(x) & \text{ if } x\in F_\mathbf{A}\\
f(x\sn)^{-1} & \text{ if } x\in J_\mathbf{A}.
\end{cases}  
$$
If $f$ is surjective/injective, so is $h$.
\end{lemma}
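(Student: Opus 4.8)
The plan is to mimic the proof of Lemma~\ref{KiteHom}, but working directly with the structural decomposition $A = F_{\mathbf A}\uplus J_{\mathbf A}$ of a perfect pseudo MV-algebra rather than with an explicitly given kite. First I would note that the hypothesis on $f$ is exactly the condition needed to apply the functor $\ell$ and lift $f$ to an $\ell$-group homomorphism that commutes with the distinguished automorphisms. Concretely: $f\colon\mathbf F_{\mathbf A}\to\mathbf F_{\mathbf B}$ is a homomorphism in $\var{\cigmv}$, and the condition $f(x\sn\sn) = f(x)\sn\sn$ says precisely that $f$ commutes with the automorphisms $-\sn\sn$ on $\mathbf F_{\mathbf A}$ and $\mathbf F_{\mathbf B}$. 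Hence, by Theorem~\ref{KitesRep}, $\mathbf A\cong\mathcal K(\ell(\mathbf F_{\mathbf A}),\ell^\approx)$ and $\mathbf B\cong\mathcal K(\ell(\mathbf F_{\mathbf B}),\ell^\approx)$, and applying $\ell$ to $f$ yields an $\ell$-group homomorphism $g\colon\ell(\mathbf F_{\mathbf A})\to\ell(\mathbf F_{\mathbf B})$ with $g|_{F_{\mathbf A}} = f$ and $g\circ\ell^\approx = \ell^\approx\circ g$ (using Proposition~\ref{aut-commut} to promote commutation from the cones to all of $\ell(\mathbf F_{\mathbf A})$). Then Lemma~\ref{KiteHom} gives a pseudo MV-algebra homomorphism $h_g\colon\mathcal K(\ell(\mathbf F_{\mathbf A}),\ell^\approx)\to\mathcal K(\ell(\mathbf F_{\mathbf B}),\ell^\approx)$, which, transported back along the isomorphisms of Theorem~\ref{KitesRep}, is the desired $\overline f$.

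**Matching the formula.** Next I would check that this $\overline f$ is given by the stated case formula. On $F_{\mathbf A}$ it agrees with $f$ by construction. For $x\in J_{\mathbf A}$: under the isomorphism $\Omega$ from the proof of Theorem~\ref{KitesRep}, $x$ corresponds to $(x\sn)^{-1}\in\ell(\mathbf F_{\mathbf A})^+$, and $h_g$ acts on that element as $g$ does, i.e.\ sends it to $g((x\sn)^{-1}) = g(x\sn)^{-1} = f(x\sn)^{-1}$; transporting back via $\Omega^{-1}$ on the $\mathbf B$ side (which is the identity on the positive cone composed with the negation-inverse bookkeeping) one recovers exactly $f(x\sn)^{-1}$ as an element of $J_{\mathbf B}$. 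This is essentially the content of Lemma~\ref{KiteFilHom}, and I would invoke it to keep the bookkeeping short. It is worth also double-checking directly that $\overline f$ preserves the negations: for $x\in F_{\mathbf A}$ one has $\overline f(x\fn) = f(x^{-1-1})^{-1} = f(x)^{-1} = \overline f(x)\fn$ and similarly for $-\sn$ using $f(x\sn\sn) = f(x)\sn\sn$; for $x\in J_{\mathbf A}$ the symmetric computation works. Since $\overline f$ preserves $0$, $1$, the negations and (by the above reduction to $h_g$) the product, it is a homomorphism of pseudo MV-algebras.

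**Surjectivity and injectivity.** Finally, the last sentence. If $f$ is surjective then $g = \ell(f)$ is surjective (the functor $\ell$ preserves surjections, since $\ell(\mathbf C)$ is generated as an $\ell$-group by the negative cone $\mathbf C$), hence $h_g$ is surjective on both cones $L_2^-$ and $L_2^+$, so $\overline f$ is onto $F_{\mathbf B}\uplus J_{\mathbf B} = B$. If $f$ is injective, then $\overline f$ restricted to $F_{\mathbf A}$ is injective, and on $J_{\mathbf A}$ the map $x\mapsto f(x\sn)^{-1}$ is a composite of the injections $x\mapsto x\sn$ (negations are bijective in pseudo MV-algebras), $f$, and inversion; moreover $\overline f(F_{\mathbf A})\subseteq F_{\mathbf B}$ and $\overline f(J_{\mathbf A})\subseteq J_{\mathbf B}$, and these are disjoint, so the two injective pieces cannot collide. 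Hence $\overline f$ is injective.

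**Main obstacle.** I expect the only real subtlety to be the verification that $\ell$ genuinely lifts the commutation hypothesis — i.e.\ that $g\circ\ell^\approx = \ell^\approx\circ g$ holds on all of $\ell(\mathbf F_{\mathbf A})$, not merely on the cone — and that the two isomorphisms of Theorem~\ref{KitesRep} are compatible with $h_g$ in the precise sense needed to read off the case formula for $\overline f$. Both are handled by Proposition~\ref{aut-commut} and by tracking $\Omega$ carefully, but that bookkeeping is where an error would most easily hide; everything else is routine. (A more pedestrian alternative, bypassing the functor $\ell$ and Theorem~\ref{KitesRep} entirely, is to just posit the formula for $\overline f$ and verify preservation of $0$, $1$, $-\fn$, $-\sn$, and $\odot$ by the same four-case analysis as in the proof of Theorem~\ref{KitesRep}; I would mention this as the fallback but present the functorial argument as the main line.)
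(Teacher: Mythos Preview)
Your proposal is correct and follows essentially the same route as the paper: apply the functor $\ell$ to $f$, use the commutation hypothesis to get an $\ell$-group homomorphism intertwining the automorphisms $\ell^\approx$, and then invoke Lemmas~\ref{KiteHom} and~\ref{KiteFilHom}. The one place you add unnecessary work is in reaching for Proposition~\ref{aut-commut} to ``promote commutation from the cones''; since $\ell^\approx = \ell(-^{\sn\sn})$ by definition, pure functoriality of $\ell$ already gives $\ell(f)\circ\ell^\approx = \ell(f\circ(-^{\sn\sn})) = \ell((-^{\sn\sn})\circ f) = \ell^\approx\circ\ell(f)$, with no cone-to-group extension needed.
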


\begin{proof}
We have a unique homomorphism
$\ell(f)\colon \ell(\mathbf{F_A})\rightarrow \ell(\mathbf{F_B})$.
By the remarks preceding Theorem~\ref{KitesRep}, the map
$\ell^\approx(-) = \ell(-^{\sn\sn})$ is an automorphism
of both $\ell(\mathbf{F_A})$ and $\ell(\mathbf{F_B})$.
Since $f$ commutes with $(-^{\sn\sn})$, we have
$\ell(f)\circ\ell^\approx = \ell^\approx\circ \ell(f)$ and
the claim follows by Lemmas~\ref{KiteHom} and~\ref{KiteFilHom}.
The moreover part is immediate.
\end{proof}

\begin{lemma}\label{KiteHoms}
Let\/ $\mathbf B$ be a B-cycle and $f\colon \mathbf L_1\rightarrow\mathbf L_2$
be a homomorphism of $\ell$-groups. Then there exists a homomorphism  
$$
\mathcal K_\mathbf B(f)\colon \mathcal K_\mathbf B(\mathbf L_1)\rightarrow
\mathcal K_\mathbf B(\mathbf L_2)
$$
such that $((\mathcal K_\mathbf{B}(f))(x))(i)=f(x(i))$
for any $x\in F_{\mathcal K_\mathbf B(\mathbf{L}_1)}=(L_1^-)^B$ and any $i\in B$. 
\end{lemma}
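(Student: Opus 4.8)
The plan is to reduce the statement to Lemma~\ref{KiteHom} by observing that $f$ induces, coordinatewise, an $\ell$-group homomorphism between the powers $\mathbf{L}_1^B$ and $\mathbf{L}_2^B$ which intertwines the two coordinate-permuting automorphisms. Concretely, I would define $f^B\colon\mathbf{L}_1^B\to\mathbf{L}_2^B$ by $(f^B(x))(i)\dfeq f(x(i))$ for all $i\in B$. Since every operation of $\mathbf{L}_i^B$ is computed coordinatewise and $f$ is an $\ell$-group homomorphism, $f^B$ is an $\ell$-group homomorphism; moreover, identifying $(\mathbf{L}_1^B)^- = (L_1^-)^B$ and $(\mathbf{L}_1^B)^+ = (L_1^+)^B$ (the cones of a power are the powers of the cones), $f^B$ maps $(L_1^-)^B$ into $(L_2^-)^B$ and $(L_1^+)^B$ into $(L_2^+)^B$.

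Next I would check that $f^B$ commutes with the distinguished automorphisms. Writing $\lambda_i$ for the automorphism of $\mathbf{L}_i^B$ given by $\lambda_i(x)(j) = x(\beta(j))$, we have for every $x\in L_1^B$ and $j\in B$
\[
(f^B(\lambda_1(x)))(j) = f(\lambda_1(x)(j)) = f(x(\beta(j))) = (f^B(x))(\beta(j)) = (\lambda_2(f^B(x)))(j),
\]
so $f^B\circ\lambda_1 = \lambda_2\circ f^B$.

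Now Lemma~\ref{KiteHom}, applied to $f^B$ in place of $f$, yields a homomorphism $h_{f^B}\colon\mathcal{K}(\mathbf{L}_1^B,\lambda_1)\to\mathcal{K}(\mathbf{L}_2^B,\lambda_2)$ with $h_{f^B} = f^B|_{(L_1^-)^B\cup(L_1^+)^B}$. By Definition~\ref{classical-kites} we have $\mathcal{K}(\mathbf{L}_i^B,\lambda_i) = \mathcal{K}_\mathbf{B}(\mathbf{L}_i)$, so setting $\mathcal{K}_\mathbf{B}(f)\dfeq h_{f^B}$ gives the required homomorphism $\mathcal{K}_\mathbf{B}(\mathbf{L}_1)\to\mathcal{K}_\mathbf{B}(\mathbf{L}_2)$. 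Finally, for $x\in F_{\mathcal{K}_\mathbf{B}(\mathbf{L}_1)} = (L_1^-)^B$ we get $(\mathcal{K}_\mathbf{B}(f))(x) = f^B(x)$, whence $((\mathcal{K}_\mathbf{B}(f))(x))(i) = f(x(i))$ for all $i\in B$, as stated.

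As for difficulty: there is essentially no obstacle. The content of the lemma is already carried by Lemma~\ref{KiteHom}, and the three things to verify — that coordinatewise application of an $\ell$-group homomorphism is again an $\ell$-group homomorphism, that it respects the positive and negative cones, and that it intertwines the coordinate-permutation automorphisms induced by $\beta$ — are all immediate. The only point worth stating carefully is the identification $(\mathbf{L}^B)^\pm = (L^\pm)^B$, which is precisely what allows Lemma~\ref{KiteHom} to apply verbatim.
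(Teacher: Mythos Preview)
Your proof is correct and essentially matches the paper's approach: both define the coordinatewise map $x\mapsto (i\mapsto f(x(i)))$ and verify that it intertwines the relevant automorphisms. The only cosmetic difference is that the paper invokes Lemma~\ref{FilHom} (working with the restriction $\tilde f\colon(\mathbf L_1^-)^B\to(\mathbf L_2^-)^B$ and checking compatibility with $\sn\sn$), whereas you go one layer deeper and invoke Lemma~\ref{KiteHom} directly on $f^B\colon\mathbf L_1^B\to\mathbf L_2^B$; since Lemma~\ref{FilHom} is itself proved via Lemma~\ref{KiteHom}, the two routes coincide.
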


\begin{proof}
It is easily seen that $f$ induces a homomorphism
$$
\tilde{f}\colon (\mathbf L_1^-)^B\rightarrow (\mathbf L_2^-)^B
$$
defined by $(\tilde{f}(x))(i)=f(x(i))$, and satisfying the
conditions of Lemma~\ref{FilHom}. 
\end{proof}

\begin{lemma}\label{lemmaHSPkites}
Let $\mathbf B$ be a B-cycle, and let $\mathbf{L}_1$, $\mathbf{L}_2$,
$\mathbf L_j$ \textup{(}$j\in J$\textup{)}, be $\ell$-groups. The following hold.
\begin{align}
\mathbf L_1\in S(\mathbf L_2)  \quad&\Rightarrow\quad 
\mathcal K_\mathbf B(\mathbf L_1)\in S(\mathcal K_\mathbf B(\mathbf L_2))\tag{i} \\
  \mathbf L_1\in H(\mathbf L_2)  \quad&\Rightarrow\quad
\mathcal K_\mathbf B(\mathbf L_1)\in H(\mathcal K_\mathbf B(\mathbf L_1))\tag{ii}\\ 
\mathcal K_\mathbf B(\prod_{j\in J}\mathbf L_j) &=
\pprod_{j\in J}  \mathcal{K}_\mathbf B(\mathbf L_j)\tag{iii}
\end{align}
\end{lemma}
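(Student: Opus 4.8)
The plan is to derive all three parts from the functoriality and good behaviour of the kite construction already established in Lemmas~\ref{KiteHom}, \ref{KiteHoms} and~\ref{FilHom}, together with the structure of $\mathcal{K}_{\mathbf B}(-)$ as a composite of "take the $B$-th power" with "apply $\mathcal{K}(-,\lambda)$". For (i), suppose $g\colon \mathbf L_1\hookrightarrow\mathbf L_2$ is an embedding of $\ell$-groups. Then $g^B\colon \mathbf L_1^B\to\mathbf L_2^B$ (acting coordinatewise) is an embedding, and it commutes with the coordinate-permuting automorphisms $\lambda_i$ induced by $\beta$ on $\mathbf L_i^B$, since both are defined by the same reindexing $i\mapsto\beta(i)$. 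Hence by the "main part" of Lemma~\ref{KiteHom} we get a homomorphism $h_{g^B}\colon\mathcal{K}_{\mathbf B}(\mathbf L_1)\to\mathcal{K}_{\mathbf B}(\mathbf L_2)$, which is $g^B$ restricted to $(L_1^-)^B\cup(L_1^+)^B$; since $g^B$ is injective on all of $L_1^B$, this restriction is injective, giving (i).

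For (ii) (where the displayed conclusion should of course read $\mathcal K_\mathbf B(\mathbf L_1)\in H(\mathcal K_\mathbf B(\mathbf L_2))$), suppose $g\colon\mathbf L_2\twoheadrightarrow\mathbf L_1$ is a surjective $\ell$-group homomorphism. Again $g^B\colon\mathbf L_2^B\to\mathbf L_1^B$ is surjective and commutes with the $\beta$-induced automorphisms, so Lemma~\ref{KiteHoms} (or directly Lemma~\ref{KiteHom} together with the "moreover/surjectivity" clause of Lemma~\ref{FilHom}) yields a homomorphism $\mathcal K_{\mathbf B}(g)\colon\mathcal K_{\mathbf B}(\mathbf L_2)\to\mathcal K_{\mathbf B}(\mathbf L_1)$ whose restriction to the filter $(L_2^-)^B$ is $g^B|$, hence surjective onto $(L_1^-)^B$; surjectivity on the ideal part $(L_2^+)^B\to(L_1^+)^B$ follows because $g^B$ is onto there as well (or, as in Lemma~\ref{KiteHom}, because negations are bijective and $\mathcal K_{\mathbf B}(g)$ commutes with $\fn$). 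Thus $\mathcal K_{\mathbf B}(g)$ is onto and (ii) holds.

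For (iii), the two sides have the same underlying set: an element of $\mathcal K_{\mathbf B}(\prod_j\mathbf L_j)$ lies in either $((\prod_j L_j)^-)^B$ or $((\prod_j L_j)^+)^B$, and since $(\prod_j L_j)^\pm=\prod_j L_j^\pm$, such a tuple is precisely a choice, for each coordinate $i\in B$, of an element of $\prod_j L_j^-$ or of $\prod_j L_j^+$ with the \emph{same} sign across all $i$ — equivalently, a family $(a_j)_{j\in J}$ with $a_j\in\mathcal K_{\mathbf B}(\mathbf L_j)$ all lying in the respective $F$'s or all in the respective $J$'s, which is exactly membership in $\pprod_{j\in J}\mathcal K_{\mathbf B}(\mathbf L_j)$ as defined via the maps $f_{\mathcal K_{\mathbf B}(\mathbf L_j)}$ from Theorem~\ref{GenKite}. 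One then checks that each operation of $\mathcal K(-,\lambda)$ is computed coordinatewise in $j$ (because the $\ell$-group operations in $\prod_j\mathbf L_j$ are, and $\lambda$ on $(\prod_j L_j)^B$ is the product of the $\lambda$'s), which is precisely how the operations of the perfect product $\pprod$ are defined; hence the two algebras coincide, not merely are isomorphic.

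The only mildly delicate point — the "main obstacle", such as it is — is the bookkeeping in (iii): one must be careful that the sign condition defining $\pprod_{j\in J}$ (a single $k\in\{0,1\}$ working for all $j$) matches the condition defining the underlying set of $\mathcal K_{\mathbf B}(\prod_j\mathbf L_j)$ (a single choice of $L^-$ versus $L^+$ in $\prod_j\mathbf L_j$, i.e.\ one sign for all $j$ simultaneously), and that this is compatible with the further $B$-indexing, where again the sign is uniform across $i\in B$ by definition of $\mathcal K$. Once the identification of universes is pinned down, preservation of operations is the routine coordinatewise verification indicated above, and (i), (ii) are immediate applications of the functoriality lemmas.
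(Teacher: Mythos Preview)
Your proposal is correct and follows essentially the same approach as the paper: for (i) and (ii) the paper simply observes that an injective (resp.\ surjective) $f$ in Lemma~\ref{KiteHoms} yields an injective (resp.\ surjective) $\tilde f$, and for (iii) it unpacks the definitions to show the two universes coincide, just as you do. Your version is somewhat more explicit (you spell out the commutation with $\lambda$ and the coordinatewise verification of operations in (iii), which the paper leaves implicit), and you rightly flag the typo in the displayed conclusion of (ii).
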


\begin{proof}
Note that injective (surjective) $f$ in Lemma~\ref{KiteHoms} induces
an injective (surjective) $\tilde{f}$. From this, (i) and (ii) follow immediately.
To demonstrate (iii), for any $x\in\prod_{j\in J}((L^-_j)^B\uplus(L^+_j)^B)$
we have
$$
x\in \pprod_{j\in J} \bigl((L^-_j)^B\uplus(L^+_j)^B\bigr)
\qquad\text{if and only if}\qquad
x\in\prod_{j\in J}(L^-_j)^B\quad\text{or}\quad x\in\prod_{j\in J}(L^+_j)^B
$$
by definition. Hence
$$
\pprod_{j\in J} \bigl((L^-_j)^B\uplus(L^+_j)^B\bigr)= \prod_{j\in
  J}(L^-_j)^B\uplus \prod_{j\in J}(L^+_j)^B
$$
as required.
\end{proof}

\begin{cor}\label{cor-HSPkites}
Let $\mathbf B$ be a B-cycle, and let $\mathcal{V}$ be a variety of 
pseudo MV-algebras. Then the classes
$\{\mathbf L\in\var{LG}: \mathcal{K}_{\mathbf{B}}(\mathbf L)\in \mathcal{V}\}$
and $\{\mathbf L^-\in\var{\cigmv}: \mathcal{K}_{\mathbf{B}}(\mathbf L)\in \mathcal{V}\}$
are varieties.   
\end{cor}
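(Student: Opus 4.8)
The plan is to show each of the two classes is closed under $H$, $S$, and $P$, and then invoke Birkhoff's theorem. Since $\var{LG}$ and $\var{\cigmv}$ are categorically equivalent and the equivalence is the restriction-to-negative-cone functor together with its adjoint $\ell$, it suffices to treat one of the two classes; I will work with $\avar{W} \dfeq \{\mathbf L\in\var{LG}: \mathcal{K}_{\mathbf{B}}(\mathbf L)\in \mathcal{V}\}$ and then transport along ${}^-$ and $\ell$. The three closure conditions will come almost directly from Lemma~\ref{lemmaHSPkites}, which tells us how $\mathcal K_{\mathbf B}(-)$ interacts with $S$, $H$, and products.

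First I would handle products. If $\mathbf L_j\in\avar{W}$ for all $j\in J$, then each $\mathcal K_{\mathbf B}(\mathbf L_j)\in\mathcal{V}$, and by Lemma~\ref{lemmaHSPkites}(iii), $\mathcal K_{\mathbf B}(\prod_j\mathbf L_j) = \pprod_j \mathcal K_{\mathbf B}(\mathbf L_j)$. Now $\pprod_j \mathcal K_{\mathbf B}(\mathbf L_j)$ is a subalgebra of $\prod_j \mathcal K_{\mathbf B}(\mathbf L_j)$, hence lies in $\mathcal{V}$ because $\mathcal{V}$ is a variety and so is closed under $P$ and $S$. Therefore $\prod_j\mathbf L_j\in\avar{W}$. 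For subalgebras: if $\mathbf L_1\in S(\mathbf L_2)$ with $\mathbf L_2\in\avar{W}$, then by Lemma~\ref{lemmaHSPkites}(i) we have $\mathcal K_{\mathbf B}(\mathbf L_1)\in S(\mathcal K_{\mathbf B}(\mathbf L_2))\subseteq \mathcal{V}$, so $\mathbf L_1\in\avar{W}$. For homomorphic images, the same argument runs via Lemma~\ref{lemmaHSPkites}(ii): if $\mathbf L_1\in H(\mathbf L_2)$ and $\mathbf L_2\in\avar{W}$, then $\mathcal K_{\mathbf B}(\mathbf L_1)\in H(\mathcal K_{\mathbf B}(\mathbf L_2))\subseteq \mathcal{V}$. (Note that the statement of Lemma~\ref{lemmaHSPkites}(ii) as printed has a typo, $\mathbf L_1$ for $\mathbf L_2$ on the right; the intended and correct reading is $\mathcal K_{\mathbf B}(\mathbf L_1)\in H(\mathcal K_{\mathbf B}(\mathbf L_2))$, which is what its proof establishes.) By Birkhoff's HSP theorem, $\avar{W}$ is a variety of $\ell$-groups.

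For the second class, $\avar{W}^- \dfeq \{\mathbf L^-\in\var{\cigmv}: \mathcal{K}_{\mathbf{B}}(\mathbf L)\in \mathcal{V}\}$, I would transport the result along the categorical equivalence ${}^-\colon\var{LG}\to\var{\cigmv}$ with inverse $\ell$. Recall from the preliminaries that $\ell(\mathbf A)^- = \mathbf A$ and $\ell(\mathbf L^-)\cong\mathbf L$, and that the subvariety lattices of $\var{LG}$ and $\var{\cigmv}$ are isomorphic via these functors. The kite construction depends only on the $\ell$-group, and an algebra $\mathbf A\in\var{\cigmv}$ satisfies $\mathcal K_{\mathbf B}(\ell(\mathbf A))\in\mathcal{V}$ precisely when $\mathbf A\in\avar{W}^-$; but $\mathbf A\mapsto\ell(\mathbf A)$ is (up to isomorphism) the image of $\avar{W}$ under $\ell$, which is a subvariety because the functors preserve $H$, $S$, $P$. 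Concretely, $\avar{W}^-$ is the class of negative cones of members of $\avar{W}$, and since the correspondence $\mathbf L\leftrightarrow\mathbf L^-$ carries $H$, $S$, $P$ to $H$, $S$, $P$, closure of $\avar{W}$ under these operators forces closure of $\avar{W}^-$. Hence $\avar{W}^-$ is a variety of cancellative IGMV-algebras.

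The only mildly delicate point—and the closest thing to an obstacle—is making sure the perfect-product subalgebra $\pprod_j\mathcal K_{\mathbf B}(\mathbf L_j)$ genuinely sits inside $\mathcal{V}$; this is immediate once one observes it is an ordinary subalgebra of the ordinary product $\prod_j\mathcal K_{\mathbf B}(\mathbf L_j)$, and $\mathcal V$, being a variety, absorbs both operations. Everything else is bookkeeping: the substance is entirely carried by Lemma~\ref{lemmaHSPkites} and Birkhoff's theorem, with the $\var{\cigmv}$ case following by the categorical equivalence already recorded in the preliminaries.
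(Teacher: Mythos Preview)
Your proposal is correct and is exactly the argument the paper intends: the corollary is stated immediately after Lemma~\ref{lemmaHSPkites} with no separate proof, and your derivation---closure of $\{\mathbf L:\mathcal K_{\mathbf B}(\mathbf L)\in\mathcal V\}$ under $H$, $S$, $P$ via parts (i), (ii), (iii) of that lemma (noting that $\pprod$ lands in $SP\subseteq\mathcal V$), then Birkhoff, then transport along the ${}^-/\ell$ equivalence---is precisely what the placement implies. Your observation about the typo in (ii) is also correct.
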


\section{Approximation of perfect pseudo MV-algebras by kites}\label{sec:approximation}

Let $\mathbf{L}$ be an $\ell$-group, and let
$f\colon \mathbf B\rightarrow \mathbf C$ be a homomorphism of B-cycles.
Then $f$ naturally lifts to a homomorphism 
$$
\mathcal K_ f (\mathbf L)\colon \mathcal K_\mathbf C (\mathbf L)\rightarrow
\mathcal K_\mathbf B (\mathbf L)
$$
defined by $(\mathcal{K}_f(\mathbf L))(x)=x\circ f$.
Moreover
$$
\mathcal K_-(\mathbf L)\colon \var{BC}\rightarrow \var{\pfpmv}
$$
is a contravariant functor. Since 
$\sn\sn$ is an automorphism on the filters of perfect pseudo MV-algebras,
for any $\mathbf{A}\in \var{\pfpmv}$ we have that
$(F_\mathbf{A}; {\sn\sn})$ is a B-cycle. Therefore
we can correctly state the following definition.  

\begin{defin}\label{spec-cyc}
For a perfect MV-algebra $\mathbf A$ and an $\ell$-group $\mathbf L$ we
define a B-cycle $[\mathbf A,\mathbf L]\dfeq ([A,L]; \lambda)$, where
$$
[A,L] \dfeq \{\alpha\in (L^-)^{F_\mathbf{A}}\colon
\alpha \text{ is a homomorphism in }\var\cigmv\}  
$$
given by $\lambda(\alpha (x))\dfeq \alpha (x\sn\sn)$
for any $\alpha \in [A,L]$. 
\end{defin}

\begin{lemma}\label{K-contravariant}
If $f\colon \mathbf A_1\rightarrow \mathbf A_2$ is a homomorphism of perfect
pseudo MV-algebras, then 
$$
[f,\mathbf L] \colon [ \mathbf A_2,\mathbf L ]\rightarrow [ \mathbf
A_1,\mathbf L ]
$$
given by $[ f,\mathbf L](\alpha)=\alpha\circ f$, is a homomorphism of B-cycles.
Moreover
$$
[-,\mathbf L]\colon  \var{\pfpmv}\rightarrow \var{BC}
$$ 
is a contravariant functor.
\end{lemma}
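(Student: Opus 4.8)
The plan is to verify directly that $[f,\mathbf L]$ is a well-defined homomorphism of B-cycles, and then to check the two functoriality axioms. First I would check that $[f,\mathbf L]$ is well-defined: given $\alpha\in[\mathbf A_2,\mathbf L]$, i.e.\ a $\var\cigmv$-homomorphism $\alpha\colon \mathbf F_{\mathbf A_2}\to\mathbf L^-$, I need $\alpha\circ f$ to be a $\var\cigmv$-homomorphism from $\mathbf F_{\mathbf A_1}$ to $\mathbf L^-$. This follows because $f$ is a homomorphism of pseudo MV-algebras, hence maps $F_{\mathbf A_1}$ into $F_{\mathbf A_2}$ (since $f$ must commute with the unique homomorphisms onto $\bm 2$, by Lemma~\ref{hom-unique}), and its restriction $f|_{F_{\mathbf A_1}}$ is then a homomorphism of the residuated-lattice reducts, which are the IGMV-algebras $\mathbf F_{\mathbf A_1}$, $\mathbf F_{\mathbf A_2}$; composing two $\var\cigmv$-homomorphisms gives a $\var\cigmv$-homomorphism. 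So $\alpha\circ f\in[\mathbf A_1,\mathbf L]$.

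Next I would check that $[f,\mathbf L]$ commutes with the B-cycle operations, i.e.\ $[f,\mathbf L](\lambda(\alpha)) = \lambda([f,\mathbf L](\alpha))$ for all $\alpha\in[\mathbf A_2,\mathbf L]$. Unravelling Definition~\ref{spec-cyc}, the left side evaluated at $x\in F_{\mathbf A_1}$ is $\lambda(\alpha)(f(x)) = \alpha(f(x)^{\sn\sn})$, while the right side is $\lambda(\alpha\circ f)(x) = (\alpha\circ f)(x^{\sn\sn}) = \alpha(f(x^{\sn\sn}))$. These agree because $f$, being a pseudo MV-algebra homomorphism, preserves the negations $\fn$ and $\sn$, hence $f(x^{\sn\sn}) = f(x)^{\sn\sn}$. (Here I am using that the relevant $\sn\sn$ is computed inside $\mathbf F_{\mathbf A_i}$ and agrees with the one in $\mathbf A_i$ on $F_{\mathbf A_i}$, as discussed in the remarks preceding Theorem~\ref{KitesRep}.) This establishes that $[f,\mathbf L]$ is a morphism in $\var{BC}$.

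Finally, for the \emph{moreover} part I would verify the two functor axioms. Contravariance of the identity: $[\mathrm{id}_{\mathbf A},\mathbf L](\alpha) = \alpha\circ\mathrm{id}_{\mathbf A} = \alpha$, so $[\mathrm{id}_{\mathbf A},\mathbf L] = \mathrm{id}_{[\mathbf A,\mathbf L]}$. Contravariant composition: for $f\colon\mathbf A_1\to\mathbf A_2$ and $g\colon\mathbf A_2\to\mathbf A_3$, we have $[g\circ f,\mathbf L](\alpha) = \alpha\circ(g\circ f) = (\alpha\circ g)\circ f = [f,\mathbf L]([g,\mathbf L](\alpha))$, so $[g\circ f,\mathbf L] = [f,\mathbf L]\circ[g,\mathbf L]$; the arrow-reversal shows $[-,\mathbf L]$ is contravariant. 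None of these steps is a genuine obstacle; the only point requiring a little care is the well-definedness in the first paragraph, namely confirming that a pseudo MV-algebra homomorphism carries $F_{\mathbf A_1}$ into $F_{\mathbf A_2}$ and restricts there to a $\var\cigmv$-morphism, which is exactly the content already used implicitly in Lemma~\ref{FilHom} and its proof.
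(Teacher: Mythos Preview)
Your proof is correct and follows the same approach as the paper, which simply states that the lemma is ``Immediate by Definition~\ref{spec-cyc}.'' You have carefully unpacked what that immediacy amounts to---well-definedness (using that $f$ carries $F_{\mathbf A_1}$ into $F_{\mathbf A_2}$ via the uniqueness in Lemma~\ref{hom-unique}), compatibility with the B-cycle operation (using that $f$ preserves $\sn\sn$), and the functor axioms---all of which are indeed routine verifications.
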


\begin{proof}
Immediate by Definition~\ref{spec-cyc}.
\end{proof}  

\begin{theorem}
Let $\mathbf{L}$ be an $\ell$-group. Then
\begin{enumerate}
\item For any perfect pseudo MV-algebra $\mathbf A$ there exists a homomorphism
$$
\eta_\mathbf A \colon \mathbf A\rightarrow \mathcal
K_{[\mathbf A,\mathbf L]}(\mathbf L)
$$
defined by 
$$
(\eta_\mathbf A (x))(\alpha)= \begin{cases}
\alpha (x) & \text{ if }  x\in F_\mathbf A,\\
\alpha(x\sn)^{-1} & \text{ if } x\in J_\mathbf A.
\end{cases}
$$
for any $x\in A$ and any $\alpha \in [A,L]$. Then
$$
\eta = (\eta_\mathbf A \colon \mathbf A\rightarrow
\mathcal K_{[\mathbf A,\mathbf{L}]}(\mathbf{L}))_{\mathbf{A}\in |\var{P\Psi{}MV}|}
$$
is a natural transformation
from the identity endofunctor to the endofunctor
$K_{[\mathbf -,\mathbf{L}]}(\mathbf L)$. 

\item For any B-cycle $\mathbf B = (B;\lambda)$ there exists a homomorphism
$$
\varepsilon_\mathbf B\colon \mathbf B\rightarrow [\mathcal{K}_\mathbf
B(\mathbf L),\mathbf L]
$$
such that $(\varepsilon_\mathbf B(i))(x)=x(i)$ for any $i\in B$
and $x\in (L^-)^B$, hence $\varepsilon_\mathbf B(i)=\pi_i\colon
(L^-)^B\rightarrow L^-$. Then
$$
\varepsilon = (\varepsilon_\mathbf B\colon \mathbf B\rightarrow
[\mathcal{K}_\mathbf B(\mathbf L),\mathbf L])_{\mathbf B\in |\var{BC}|}
$$
is a natural transformation from the identity endofunctor to the endofunctor
$[\mathcal{K}_\mathbf -(\mathbf L),\mathbf L]$.

\item The pair $(\eta,\varepsilon)$ forms an adjunction between the 
functors $[-,\mathbf L]$ and $\mathcal K_-(\mathbf L)$.
\end{enumerate}  
\end{theorem}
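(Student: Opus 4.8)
The plan is to derive (1) from Lemma~\ref{FilHom}, to read (2) off the definitions, and to obtain (3) by verifying the two triangle identities for the pair of contravariant functors $[-,\mathbf L]$ and $\mathcal K_-(\mathbf L)$ (note that, being a composite of two contravariant functors, each of $\mathcal K_{[-,\mathbf L]}(\mathbf L)$ and $[\mathcal K_-(\mathbf L),\mathbf L]$ is a genuine endofunctor). The one thing to watch is the coherence of three ``twists'': the automorphism $\sn\sn$ of $\mathbf F_{\mathbf A}$, the B-cycle map of $[\mathbf A,\mathbf L]$ (built from $\sn\sn$), and the coordinate-permuting automorphism $\Lambda$ of $\mathbf L^{[A,L]}$ (built from the B-cycle map). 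Precisely because the second is defined from the first and the third from the second, the evaluation map used below is automatically $\sn\sn$-equivariant, which is what makes Lemma~\ref{FilHom} applicable.

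For (1): by Definition~\ref{classical-kites}, $\mathcal K_{[\mathbf A,\mathbf L]}(\mathbf L)=\mathcal K(\mathbf L^{[A,L]},\Lambda)$ with $\Lambda(\xi)(\alpha)=\xi(\lambda(\alpha))$, where $\lambda$ is the B-cycle map of $[\mathbf A,\mathbf L]$, i.e. $\lambda(\alpha)(y)=\alpha(y\sn\sn)$; by Theorem~\ref{GenKite} its filter is $(L^-)^{[A,L]}$ with coordinatewise $\var\cigmv$-operations. The evaluation map $\mathrm{ev}\colon\mathbf F_{\mathbf A}\to(L^-)^{[A,L]}$, $\mathrm{ev}(x)(\alpha)=\alpha(x)$, is a $\var\cigmv$-homomorphism since each $\alpha\in[A,L]$ is one and the power carries coordinatewise operations. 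Applying Remark~\ref{negations} to $\mathcal K(\mathbf L^{[A,L]},\Lambda)$ gives $\xi\sn\sn=\Lambda(\xi)$ for $\xi\in(L^-)^{[A,L]}$, so $\bigl(\mathrm{ev}(x)\sn\sn\bigr)(\alpha)=\Lambda(\mathrm{ev}(x))(\alpha)=\mathrm{ev}(x)(\lambda(\alpha))=\lambda(\alpha)(x)=\alpha(x\sn\sn)=\mathrm{ev}(x\sn\sn)(\alpha)$; thus $\mathrm{ev}$ commutes with $\sn\sn$. Lemma~\ref{FilHom} then yields a pseudo MV-homomorphism $\overline{\mathrm{ev}}\colon\mathbf A\to\mathcal K_{[\mathbf A,\mathbf L]}(\mathbf L)$, and since inverses in $\ell\bigl((L^-)^{[A,L]}\bigr)\cong\mathbf L^{[A,L]}$ are coordinatewise, the formula of Lemma~\ref{FilHom} becomes exactly the formula for $\eta_{\mathbf A}$ in the statement. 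For naturality, given $f\colon\mathbf A_1\to\mathbf A_2$ and $\beta\in[A_2,L]$, recall that $\mathcal K_{[f,\mathbf L]}(\mathbf L)$ acts on filters by $\xi\mapsto\xi\circ[f,\mathbf L]$ with $[f,\mathbf L](\beta)=\beta\circ f$; hence $\bigl(\mathcal K_{[f,\mathbf L]}(\mathbf L)\circ\eta_{\mathbf A_1}\bigr)(x)(\beta)=\eta_{\mathbf A_1}(x)(\beta\circ f)$, which equals $\beta(f(x))$ for $x\in F_{\mathbf A_1}$ and $\beta\bigl(f(x)\sn\bigr)^{-1}$ for $x\in J_{\mathbf A_1}$ (using that $f$ preserves $\sn$ and maps filter to filter, ideal to ideal), and this matches $\bigl(\eta_{\mathbf A_2}\circ f\bigr)(x)(\beta)$.

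For (2): each projection $\pi_i\colon(L^-)^B\to L^-$ is a $\var\cigmv$-homomorphism, so $\pi_i\in[\mathcal K_{\mathbf B}(\mathbf L),L]$ and $\varepsilon_{\mathbf B}(i)\dfeq\pi_i$ is well defined. Writing $\Lambda_{\mathbf B}$ for the automorphism of $\mathbf L^B$ with $\Lambda_{\mathbf B}(\xi)(j)=\xi(\beta(j))$, Remark~\ref{negations} gives $x\sn\sn=\Lambda_{\mathbf B}(x)$ for $x\in(L^-)^B$, so for the B-cycle map of $[\mathcal K_{\mathbf B}(\mathbf L),\mathbf L]$ (precomposition with $\sn\sn$) we get $(\pi_i\circ{\sn\sn})(x)=\Lambda_{\mathbf B}(x)(i)=x(\beta(i))=\pi_{\beta(i)}(x)$, i.e. it sends $\pi_i$ to $\pi_{\beta(i)}=\varepsilon_{\mathbf B}(\beta(i))$; so $\varepsilon_{\mathbf B}$ is a B-cycle homomorphism. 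Naturality is immediate since $\mathcal K_g(\mathbf L)$ acts on filters by precomposition with $g$: for $g\colon\mathbf B\to\mathbf C$, $\bigl([\mathcal K_g(\mathbf L),\mathbf L]\circ\varepsilon_{\mathbf B}\bigr)(i)=\pi_i\circ\mathcal K_g(\mathbf L)=\pi_{g(i)}=\varepsilon_{\mathbf C}(g(i))$.

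For (3): by the standard unit/counit characterisation, dualised to the contravariant setting, it suffices to verify the triangle identities $[\eta_{\mathbf A},\mathbf L]\circ\varepsilon_{[\mathbf A,\mathbf L]}=\mathrm{id}_{[\mathbf A,\mathbf L]}$ and $\mathcal K_{\varepsilon_{\mathbf B}}(\mathbf L)\circ\eta_{\mathcal K_{\mathbf B}(\mathbf L)}=\mathrm{id}_{\mathcal K_{\mathbf B}(\mathbf L)}$; together with functoriality and the naturality of $\eta,\varepsilon$ these make $f\mapsto[f,\mathbf L]\circ\varepsilon_{\mathbf B}$ and $g\mapsto\mathcal K_g(\mathbf L)\circ\eta_{\mathbf A}$ mutually inverse natural bijections between $\mathrm{Hom}_{\var\pfpmv}(\mathbf A,\mathcal K_{\mathbf B}(\mathbf L))$ and $\mathrm{Hom}_{\var{BC}}(\mathbf B,[\mathbf A,\mathbf L])$. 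The first identity: $\bigl([\eta_{\mathbf A},\mathbf L]\circ\varepsilon_{[\mathbf A,\mathbf L]}\bigr)(\alpha)=\pi_\alpha\circ\eta_{\mathbf A}$, and for $x\in F_{\mathbf A}$ this is $\eta_{\mathbf A}(x)(\alpha)=\alpha(x)$, so the composite is $\alpha$. The second: $\bigl(\mathcal K_{\varepsilon_{\mathbf B}}(\mathbf L)\circ\eta_{\mathcal K_{\mathbf B}(\mathbf L)}\bigr)(x)=\eta_{\mathcal K_{\mathbf B}(\mathbf L)}(x)\circ\varepsilon_{\mathbf B}$, whose value at $i\in B$ is $\eta_{\mathcal K_{\mathbf B}(\mathbf L)}(x)(\pi_i)$; for $x\in(L^-)^B$ this is $\pi_i(x)=x(i)$, and for $x\in(L^+)^B$ it is $\pi_i(x\sn)^{-1}=\bigl(x(i)^{-1}\bigr)^{-1}=x(i)$ by Remark~\ref{negations} (since $x\sn=x^{-1}$ there), so the composite returns $x$. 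I expect the main obstacle to be purely clerical — fixing the directions of the two contravariant functors and the exact placement of $\sn\sn$ and the inverses — rather than conceptual; once the coherence of the twists is pinned down, everything reduces to unwinding definitions.
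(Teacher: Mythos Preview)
Your proposal is correct and follows essentially the same route as the paper's proof: in (1) you use the evaluation map on $\mathbf F_{\mathbf A}$, verify $\sn\sn$-equivariance, and invoke Lemma~\ref{FilHom} (the paper calls this map $n_{\mathbf A}$); in (2) you verify directly that projections form a B-cycle map; in (3) you check the same two triangle identities. Your upfront remark about the coherence of the three twists is a nice organisational touch, and your first triangle check is slightly cleaner than the paper's (which contains what appears to be a typo, writing $x\in J_{\mathbf A}$ where $x\in F_{\mathbf A}$ is meant).
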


\begin{proof}
For (1) we first prove that $\eta_\mathbf A$ is a homomorphism. It
is clear that the mapping 
$$
n_\mathbf A \colon \mathbf F_\mathbf A\rightarrow (\mathbf L^-)^{[A,L]} =
\mathbf F_{\mathcal K_{[\mathbf A,\mathbf   L]}(\mathbf L)}
$$
defined by $(n_\mathbf A (x))(\alpha)=\alpha (x)$ for any $x\in A$ and
$\alpha\in [A,L]$ is a homomorphism of IGMV-algebras (because $\alpha \in [A,L]$
are homomorphisms of IGMV-algebras). Moreover $n_\mathbf A$ satisfies 
$$
(n_\mathbf A (x\sn\sn))(\alpha)=\alpha (x\sn\sn)=(\lambda\circ\alpha)(x)=(n_\mathbf A
(x))(\lambda\circ\alpha)=(n_\mathbf A (x)\sn\sn)(\alpha)
$$
for any $x\in F_\mathbf A$ and $\alpha\in [A,L]$.
Then using Lemma~\ref{FilHom}, and adopting the over-bar notation from there,
we get that $\eta_\mathbf A = \overline{n}_\mathbf A$ is a homomorphism. 

 To prove that $\eta$ is a natural transformation, let  $f\colon \mathbf
 A\rightarrow\mathbf B$ be a homomorphism of perfect pseudo MV-algebras. We
 show that the diagram: 
\begin{center}
\begin{tikzcd}[column sep=large,row sep=large]
\mathbf{A} \arrow[r, "f"] \arrow[d, "\eta_\mathbf{A}"'] &
\mathbf{B} \arrow[d, "\eta_\mathbf{B}"] \\
\mathcal{K}_{[\mathbf{A},\mathbf{L}]}(\mathbf{L})
\arrow[r, "\mathcal K_{[f,\mathbf L]}(\mathbf{L})"] &
\mathcal{K}_{[\mathbf{B},\mathbf{L}]}(\mathbf{L})
\end{tikzcd}
\end{center}
commutes. Let $\alpha\in [\mathbf A,\mathbf L]$. For $x\in J_\mathbf A$,  we have
\begin{align*}
((\mathcal K_{[f,\mathbf L]}(\mathbf L)\circ \eta_\mathbf A)(x))(\alpha) &=
   (\eta_\mathbf A(x)\circ [f,\mathbf L])(\alpha) \\
&=  (\eta_\mathbf A(x))(\alpha\circ f) = \alpha (f(x\sn))^{-1}\\
&=  \alpha (f(x)\sn)^{-1}\\
&=  ( (\eta_\mathbf B\circ f)(x))(\alpha).
\end{align*} 
For $x\in F_\mathbf A$, we have
\begin{align*}
  ((\mathcal K_{[f,\mathbf L]}(\mathbf L)\circ \eta_\mathbf A)(x))(\alpha)
&=  (\eta_\mathbf A(x)\circ [f,\mathbf L])(\alpha) \\
&= (\eta_\mathbf A(x))(\alpha\circ f) = \alpha (f(x))\\
&= ( (\eta_\mathbf B\circ f)(x))(\alpha).
\end{align*}
Consequently $\eta$ is a natural transformation, as claimed.

For (2), let $\mathbf B=(B;\lambda)$ be a B-cycle. It is clear that
$\varepsilon_\mathbf B(i)$ is the projection homomorphism and hence
$\varepsilon_\mathbf B\in [\mathcal K_\mathbf B(\mathbf L),\mathbf L]$ for any
$i\in B.$ Since the equality 
$$\lambda \varepsilon_\mathbf B (i)(x)= \varepsilon_\mathbf B
(i)(x\sn\sn)=(x\sn\sn)(i)=x(\lambda i)= (\varepsilon_\mathbf B (\lambda i))(x)$$
holds for all $i\in B$ and $x\in (L^-)^B$, we get that $\varepsilon_\mathbf B$ is a
homomorphism of B-cycles.  
Next, for a homomorphism $f\colon \mathbf B\rightarrow\mathbf C$ of B-cycles,
we verify that the diagram
\begin{center}
\begin{tikzcd}[column sep=large,row sep=large]
\mathbf{B} \arrow[r, "f"] \arrow[d, "\varepsilon_\mathbf{B}"'] &
\mathbf{C} \arrow[d, "\varepsilon_\mathbf{C}"] \\
\lbrack\mathcal{K}_{\mathbf{B}}(\mathbf{L}),\mathbf{L}\rbrack
 \arrow[r, "\lbrack\mathcal{K}_{f}(\mathbf{L}){,} \mathbf{L}\rbrack"] &
\lbrack\mathcal{K}_{\mathbf{C}}(\mathbf{L}),\mathbf{L}\rbrack
\end{tikzcd}
\end{center}
commutes. For any $i\in F$ and any $x\in (L^+)^C$ we have
\begin{align*}
(([\mathcal K_f(\mathbf L),\mathbf L]\circ \varepsilon_\mathbf B)(i))(x)
&= (\varepsilon_\mathbf B (i))((\mathcal K_\mathbf f (\mathbf L))(x))\\
&= ((\mathcal K_\mathbf f (\mathbf L))(x))(i)\\
&=  x(f(i))\\
&= ((\varepsilon_\mathbf C\circ f)(i))(x).
\end{align*}
Hence, $\varepsilon$ is a natural transformation.

Finally, for (3) we need to verify commutativity of two diagrams below. 
\begin{center}
\begin{tikzcd}[column sep=large,row sep=large]
\mathcal{K}_\mathbf{B}(\mathbf{L})
\arrow[r, "\eta_{\mathcal{K}_\mathbf{B}(\mathbf{L})}"]
\arrow[rd, equals] &
\mathcal{K}_{\lbrack\mathcal{K}_\mathbf{B}(\mathbf{L}),\mathbf{L}\rbrack}(\mathbf L)
\arrow[d, "\mathcal{K}_{\varepsilon_\mathbf{B}}(\mathbf L)"] \\
  & \mathcal{K}_\mathbf{B}(\mathbf{L})
\end{tikzcd}
\qquad
\begin{tikzcd}[column sep=large,row sep=large]
\lbrack\mathbf A,\mathbf L\rbrack
\arrow[r, "\varepsilon_{\lbrack\mathbf A{,}\mathbf L\rbrack}"]
\arrow[rd, equals] &
\lbrack\mathcal{K}_{\lbrack\mathbf{A},\mathbf{L}\rbrack}(\mathbf{L}),\mathbf{L}\rbrack
\arrow[d, "\lbrack \eta_\mathbf A{,}\mathbf L\rbrack"] \\
  & \lbrack\mathbf A,\mathbf L\rbrack
\end{tikzcd}
\end{center}
Take $x\in \mathcal{K}_\mathbf{B}(\mathbf{L})$ and $i\in B$; we have two cases. 
If $x\in F_{\mathcal K_\mathbf B (\mathbf L)}=(L^-)^B$, then
$$
((\mathcal K_{\varepsilon_\mathbf B}(\mathbf L)\circ \eta_{\mathcal K_f(\mathbf
  L)})(x))(i) =( \eta_{\mathcal K_f(\mathbf L)}(x))(\varepsilon_\mathbf B (i)) =
(\varepsilon_\mathbf B (i))(x) = x(i).  
$$
If $x\in J_{\mathcal K_\mathbf B (\mathbf L)}=(L^+)^B$, then
$$
((\mathcal K_{\varepsilon_\mathbf B}(\mathbf L)\circ \eta_{\mathcal K_f(\mathbf L)})(x))(i) =( \eta_{\mathcal K_f(\mathbf L)}(x))(\varepsilon_\mathbf B (i)) = (\varepsilon_\mathbf B (i))(x\sn)^{-1} = x(i)^{-1-1}=x(i). 
$$
So the left diagram commutes. For the right diagram, taking
$\alpha\in [\mathbf A,\mathbf L]$ and $x\in J_\mathbf A$, we have
$$
(([\eta_\mathbf A,\mathbf L]\circ \varepsilon_{[\mathbf A,\mathbf
  L]})(\alpha))(x)=( \varepsilon_{[\mathbf A,\mathbf L]}(\alpha))(\eta_\mathbf
A(x))=(\eta_\mathbf A(x))(\alpha)=\alpha (x),
$$
so it commutes, too.
\end{proof}

\begin{cor}
Let $\mathbf A$ be a perfect pseudo MV-algebra, $\mathbf B$ be a B-cycle and
$\mathbf L$ be an $\ell$-group. Then for any homomorphism $f\colon \mathbf
A\rightarrow\mathcal K_\mathbf B(\mathbf L)$ there exists a unique
homomorphism of B-cycles $g\colon \mathbf B\rightarrow [\mathbf A,\mathbf L]$
such that the diagram 
\begin{center}
\begin{tikzcd}[column sep=large,row sep=large]
\mathbf{A} \arrow[r, "\eta_\mathbf{A}"] \arrow[rd,"f"'] &
\mathcal{K}_{\lbrack \mathbf{A},\mathbf{L}\rbrack}(\mathbf{L})
\arrow[d, "\mathcal{K}_g(\mathbf{L})"] \\
  & \mathcal{K}_\mathbf{B}(\mathbf{L})
\end{tikzcd}
\end{center}
commutes. Then $g=[f,\mathbf L]\circ \varepsilon_\mathbf B$ holds.
\end{cor}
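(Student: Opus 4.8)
The plan is to read the statement as the universal property of the unit of the adjunction $(\eta,\varepsilon)$ built in the preceding theorem, so that nothing has to be checked on the level of the operations of kites or B-cycles: everything follows formally from the two triangle identities in part~(3), the (contravariant) functoriality of $\mathcal{K}_-(\mathbf{L})$ and $[-,\mathbf{L}]$, and the naturality of $\eta$ and $\varepsilon$.

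For \emph{existence}, given $f\colon\mathbf{A}\to\mathcal{K}_\mathbf{B}(\mathbf{L})$ I would put $g\dfeq[f,\mathbf{L}]\circ\varepsilon_\mathbf{B}$; this is a well-formed arrow $\mathbf{B}\to[\mathbf{A},\mathbf{L}]$ in $\var{BC}$ since $\varepsilon_\mathbf{B}\colon\mathbf{B}\to[\mathcal{K}_\mathbf{B}(\mathbf{L}),\mathbf{L}]$ and, $[-,\mathbf{L}]$ being contravariant, $[f,\mathbf{L}]\colon[\mathcal{K}_\mathbf{B}(\mathbf{L}),\mathbf{L}]\to[\mathbf{A},\mathbf{L}]$. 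Applying $\mathcal{K}_-(\mathbf{L})$ and using contravariant functoriality gives $\mathcal{K}_g(\mathbf{L})=\mathcal{K}_{\varepsilon_\mathbf{B}}(\mathbf{L})\circ\mathcal{K}_{[f,\mathbf{L}]}(\mathbf{L})$; precomposing with $\eta_\mathbf{A}$ and using naturality of $\eta$ at the arrow $f$, namely $\mathcal{K}_{[f,\mathbf{L}]}(\mathbf{L})\circ\eta_\mathbf{A}=\eta_{\mathcal{K}_\mathbf{B}(\mathbf{L})}\circ f$, one obtains $\mathcal{K}_g(\mathbf{L})\circ\eta_\mathbf{A}=\mathcal{K}_{\varepsilon_\mathbf{B}}(\mathbf{L})\circ\eta_{\mathcal{K}_\mathbf{B}(\mathbf{L})}\circ f=f$, the last equality being the first (left-hand) triangle identity of the theorem. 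This gives commutativity of the triangle.

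For \emph{uniqueness} together with the closed form of $g$, I would take any $g'\colon\mathbf{B}\to[\mathbf{A},\mathbf{L}]$ with $\mathcal{K}_{g'}(\mathbf{L})\circ\eta_\mathbf{A}=f$, apply the contravariant functor $[-,\mathbf{L}]$ to this equality to get $[f,\mathbf{L}]=[\eta_\mathbf{A},\mathbf{L}]\circ[\mathcal{K}_{g'}(\mathbf{L}),\mathbf{L}]$, and then compute $[f,\mathbf{L}]\circ\varepsilon_\mathbf{B}=[\eta_\mathbf{A},\mathbf{L}]\circ[\mathcal{K}_{g'}(\mathbf{L}),\mathbf{L}]\circ\varepsilon_\mathbf{B}=[\eta_\mathbf{A},\mathbf{L}]\circ\varepsilon_{[\mathbf{A},\mathbf{L}]}\circ g'=g'$, where the middle step is naturality of $\varepsilon$ at the arrow $g'$ and the final step is the second (right-hand) triangle identity. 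Hence every $g'$ making the triangle commute coincides with $[f,\mathbf{L}]\circ\varepsilon_\mathbf{B}$, which is simultaneously the uniqueness claim and the displayed formula $g=[f,\mathbf{L}]\circ\varepsilon_\mathbf{B}$.

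I do not expect a genuine obstacle: the only point requiring care is the double contravariance of $\mathcal{K}_-(\mathbf{L})$ and $[-,\mathbf{L}]$, which must be tracked so that composites are formed in the correct order and the two triangle identities are invoked at the right objects (at $\mathcal{K}_\mathbf{B}(\mathbf{L})$ for existence, at $[\mathbf{A},\mathbf{L}]$ for uniqueness). Alternatively one could verify the triangle pointwise by repeating the element-level computations already used in the proof of the theorem, but the diagrammatic route above is shorter and uses only facts established there.
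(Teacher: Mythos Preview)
Your proposal is correct and is precisely the argument the paper intends: the corollary is stated without proof because it is the universal property of the unit $\eta$, which follows formally from the adjunction $(\eta,\varepsilon)$ established in part~(3) of the preceding theorem. Your derivation via the triangle identities and naturality of $\eta$ and $\varepsilon$, with careful tracking of the double contravariance, is exactly the standard unpacking of this fact and matches what the paper leaves implicit.
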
 
Hence the B-cycle $[\mathbf A,\mathbf L]$ is in some sense the best B-cycle to
approximate the pseudo MV-algebra $\mathbf A$ by the kite construed by given
$\ell$-group $\mathbf L$.  

\section{Varieties generated by kites}\label{sec:varieties}

Recall that $\Lambda(\mathcal{V})$ stands for the lattice of
subvarieties of $\mathcal{V}$, and $\Lambda^+(\mathcal{V})$ for 
the poset of nontrivial subvarieties of $\mathcal{V}$. Note however that
$\Lambda^+(\var{\ppmv})$ is a lattice, indeed, a complete algebraic sublattice
of $\Lambda(\var{\ppmv})$; the bottom element of $\Lambda^+(\var{\ppmv})$
is the variety $\var{BA}$ of
Boolean algebras, which is the unique atom of $\Lambda(\var{\ppmv})$.  

Throughout the section $\mathbb{D}$ will stand for the lattice 
$(\mathbb{N};\mid)$ of natural numbers under 
the divisibility ordering (with $0$ the top element).
All order theoretic notions: minima, maxima, suprema,
etc., will be taken with respect to this ordering, unless clearly indicated
otherwise. For any bijection $\lambda$ on a nonempty set $B$,
we put, inductively, $\lambda^0(x) \dfeq x$ and
$\lambda^{i+1}(x) \dfeq \lambda(\lambda^i(x))$; then we define the
\emph{dimension} of $\lambda$ as follows:
$$
\ord(\lambda)\dfeq \textstyle{\min^{\mathbb{D}}}\{n\in\mathbb{N}:\lambda^n = id_B\}.
$$
Note that the minimum is well defined since $\mathbb{D}$ satisfies the
descending chain condition. For a B-cycle $\mathbf B=(B;\lambda)$, we put
$\ord(\mathbf{B})\dfeq\ord(\lambda)$ and call it the dimension of $\mathbf{B}$.
In particular, if $\lambda = id_B$, then $\ord(\mathbf{B}) = 1$,
and if $\lambda^n\neq id_B$ for all non-zero $n$, then $\ord(\mathbf{B}) = 0$.

In this section (but only in this section), we will abbreviate the term
operation $(-\sn\sn)$ on a pseudo MV-algebra by $(-^\approx)$, and put:
$$
(-^\approx)=(-^{1\times\approx}) \dfeq
(-^{\sn\sn}),\,(-^{(n+1)\times\approx}) \dfeq (-^{n\times\approx})^\approx.
$$
For any pseudo MV-algebra $\mathbf{A}$,
the operation $-^\approx$ is a bijection on $A$, so for any
$\mathbf{A}$ we define the dimension of $\mathbf{A}$
to be $\ord(-^\approx)$. This is essential
for the rest of the section, so we state it formally.

\begin{defin}\label{psMV-dimension}
Let $\mathbf{A}\in\var{\Psi MV}$ and $\avar{V}\in\Lambda(\var{\Psi MV})$.
Then
\begin{enumerate}
\item $\ord(\mathbf{A})\dfeq \ord(-^\approx)$,
\item $\ord(\avar V) \dfeq 
\min^{\mathbb{D}}\{n\in\mathbb{D}: \ord(\mathbf{A})\mid n \text{ for all }\mathbf{A}\in
\avar{V}\}$,
\item $\var{\ppmv}_n \dfeq \var{\ppmv}\cap\mathrm{Mod}\{\lambda^n(x) = x\}$, for
  any $n\in \mathbb{D}$.
\end{enumerate}
\end{defin}
It is immediate that $\var{\ppmv}_n$ defined in (3) 
is the largest subvariety of $\var{\ppmv}$ of dimension $n$. Moreover,
for all $n,m\in\mathbb{N}$ we have
$$
\var\ppmv_n\subseteq\var\ppmv_m \text{ if and only if } n\mid m
$$
so in particular $\var{\ppmv}_0 = \var{\ppmv}$. 
All finite subdirectly irreducible cycles are of the form
$\mathbf{Z}_n\dfeq (\{0,1,\dots,n-1\};\lambda_n)$, where
$\lambda_n(m)\dfeq m+1\pmod n$ for any $m\in Z_n$. To spare notation, 
for any $\ell$-group $\mathbf{L}\in \var{LG}$
we will write $\mathcal K_n(\mathbf L)$ to denote the kite
$\mathcal{K}_{\mathbf{Z}_n}(\mathbf L)$. This agrees with the terminology
of Dvure\v{c}enskij, Kowalski~\cite{DK14}, except that dimension 0 here is
infinite dimension in~\cite{DK14}. The results from 
Section 6 of~\cite{DK14} do not state it explicitly, but their proofs show
that $\mathcal K_0(\mathbf L)$ subdirectly embeds into
$\prod_{i\in\mathbb{D}}\mathcal K_i(\mathbf L)$, so for variety generation only
finite-dimensional kites matter.  

The lattice of subvarieties of $\var{BC}$ has a unique atom, namely the variety
$\var{S}$ consisting of all algebras $\mathbf{S} = (S; id_S)$.
Obviously $\var{S}$ is term equivalent to the variety of bare sets,
and any set can be viewed as a B-cycle from $\var{S}$, so 
we introduce an \emph{ad hoc} notation
$\mathbf{B}\times S$ for $\mathbf{B}\times\mathbf{S}$ with
$\mathbf{S}\in\var{S}$.

\begin{lemma}\label{powerlemma}
Let $\mathbf B$ be a B-cycle, $\mathbf L$ be a $\ell$-group and $S$ a set. Then
$$
\mathcal K_{\mathbf{B}\times S}(\mathbf L)\cong
\pprod_S \mathcal K_\mathbf B(\mathbf L).
$$
\end{lemma}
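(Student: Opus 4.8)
The plan is to reduce the claim to an isomorphism of the form
$\mathcal{K}_{\mathbf B\times S}(\mathbf L)\cong\mathcal{K}_{\mathbf B}(\mathbf L^S)$
and then quote Lemma~\ref{lemmaHSPkites}. Applying Lemma~\ref{lemmaHSPkites}(iii) to the
constant family $(\mathbf L)_{s\in S}$ yields
$\pprod_S\mathcal{K}_{\mathbf B}(\mathbf L) = \mathcal{K}_{\mathbf B}(\mathbf L^S)$,
so it suffices to produce an isomorphism
$\mathcal{K}_{\mathbf B\times S}(\mathbf L)\cong\mathcal{K}_{\mathbf B}(\mathbf L^S)$.

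Next I would unwind the definitions. Writing $\mathbf B=(B;\beta)$, so that
$\mathbf B\times S = (B\times S;\gamma)$ with $\gamma(i,s)=(\beta(i),s)$,
Definition~\ref{classical-kites} gives
$\mathcal{K}_{\mathbf B\times S}(\mathbf L) = \mathcal{K}(\mathbf L^{B\times S},\lambda)$
with $\lambda(x)(i,s)=x(\beta(i),s)$, and
$\mathcal{K}_{\mathbf B}(\mathbf L^S) = \mathcal{K}((\mathbf L^S)^B,\lambda')$
with $\lambda'(z)(i)=z(\beta(i))$ for $z\in (L^S)^B$ and $i\in B$ (an equality in $L^S$).
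The currying map $\phi\colon(\mathbf L^S)^B\rightarrow\mathbf L^{B\times S}$ given by
$(\phi(z))(i,s)=z(i)(s)$ is the standard isomorphism of $\ell$-groups, and a one-line
check shows
$(\phi\circ\lambda')(z)(i,s) = z(\beta(i))(s) = (\lambda\circ\phi)(z)(i,s)$,
so $\phi\circ\lambda'=\lambda\circ\phi$; that is, $\phi$ is an isomorphism of
$\ell$-groups commuting with the distinguished automorphisms.

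Finally, by Lemma~\ref{KiteHom} the maps $\phi$ and $\phi^{-1}$ induce mutually inverse
homomorphisms between
$\mathcal{K}((\mathbf L^S)^B,\lambda')$ and $\mathcal{K}(\mathbf L^{B\times S},\lambda)$
(namely the restrictions of $\phi$ and $\phi^{-1}$ to the cones), so
$\mathcal{K}_{\mathbf B}(\mathbf L^S)\cong\mathcal{K}_{\mathbf B\times S}(\mathbf L)$;
combining with the first paragraph completes the argument. The only point needing care
is the index bookkeeping in the identification $(\mathbf L^S)^B\cong\mathbf L^{B\times S}$
and the verification that $\phi$ intertwines $\lambda'$ and $\lambda$; everything else is
immediate from results already established. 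One could instead argue directly, currying the
universe $(L^-)^{B\times S}\uplus(L^+)^{B\times S}$ onto
$((L^-)^B)^S\uplus((L^+)^B)^S$ and checking preservation of each operation from
Definition~\ref{gen-kite} case by case, but routing through Lemma~\ref{lemmaHSPkites}(iii)
and Lemma~\ref{KiteHom} avoids that case analysis.
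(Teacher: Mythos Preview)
Your argument is correct and follows essentially the same route as the paper: both identify $\pprod_S\mathcal{K}_{\mathbf B}(\mathbf L)$ with $\mathcal{K}_{\mathbf B}(\mathbf L^S)$ (you explicitly via Lemma~\ref{lemmaHSPkites}(iii), the paper implicitly via the filter computation $F_{\pprod_S\mathcal{K}_{\mathbf B}(\mathbf L)}=\bigl((\mathbf L^S)^-\bigr)^B$), then use the currying isomorphism and check it commutes with the automorphisms before lifting. The only cosmetic difference is that the paper lifts via Lemma~\ref{FilHom} at the level of filters, whereas you lift via Lemma~\ref{KiteHom} at the level of $\ell$-groups; since Lemma~\ref{FilHom} is itself proved through Lemma~\ref{KiteHom}, the two routes are effectively the same.
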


\begin{proof}
Putting $\mathbf{D} = \pprod_S \mathcal K_\mathbf B(\mathbf L)$, we have  
$$
F_\mathbf{D} = F_{\pprod_S \mathcal K_\mathbf B(\mathbf L)} =
\bigl((\mathbf L^S)^-\bigr)^B\cong
\bigl((\mathbf L^-)^S\bigr)^B\cong (\mathbf L^-)^{B\times S} =
F_{\mathcal K_{\mathbf{B}\times S}(\mathbf L)}.
$$
It is straightforward to check that the natural isomorphisms
above preserve automorphisms. Then the claim follows by 
Lemma~\ref{FilHom}. 
\end{proof}

\begin{lemma}\label{cycleISP}
Let $\mathbf L$ be an $\ell$-group and let $\mathbf B$ be a B-cycle. Then
$$
\mathcal K_\mathbf B(\mathbf L)\in {ISP}(\mathcal K_{\ord(\mathbf B)}(\mathbf L)).
$$
\end{lemma}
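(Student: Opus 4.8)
The plan is to exhibit $\mathbf{B}$ as a surjective image, in $\var{BC}$, of a disjoint union of copies of $\mathbf{Z}_{\ord(\mathbf{B})}$, and then transport this through the contravariant functor $\mathcal{K}_-(\mathbf{L})$; Lemma~\ref{powerlemma} will identify the target as a perfect power of $\mathcal{K}_{\ord(\mathbf{B})}(\mathbf{L})$, hence a subalgebra of an ordinary power.

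First I would set $n=\ord(\mathbf{B})$ and split $B$ into the orbits $\{O_j\}_{j\in J}$ of $\beta$, so that $\mathbf{B}$ is the coproduct $\coprod_{j\in J}\mathbf{O}_j$ of the sub-B-cycles $\mathbf{O}_j=(O_j;\beta|_{O_j})$. Each $\mathbf{O}_j$ is cyclic: it is isomorphic to $\mathbf{Z}_{d_j}$ if $O_j$ is finite of size $d_j$, and to $\mathbf{Z}_0$ (the infinite cyclic B-cycle, corresponding to dimension $0$) otherwise. Since $\{m\in\mathbb{N}:\beta^m=\mathrm{id}_B\}$ is exactly the set of common multiples of the finite orbit sizes and equals $\{0\}$ as soon as some orbit is infinite or the sizes are unbounded, its $\mathbb{D}$-least element $n$ is a multiple of each $d_j$ (reading $d_j=0$ for an infinite $O_j$, with $0\mid 0$). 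Hence reduction modulo $d_j$ is a surjective B-cycle homomorphism $\mathbf{Z}_n\twoheadrightarrow\mathbf{Z}_{d_j}\cong\mathbf{O}_j$ when $d_j>0$, and when $d_j=0$ (which forces $n=0$) the identity does the job. Gluing these maps over $j\in J$ produces a surjective B-cycle homomorphism $g\colon\mathbf{Z}_n\times J\twoheadrightarrow\mathbf{B}$, where $\mathbf{Z}_n\times J$ denotes the $J$-fold disjoint union of $\mathbf{Z}_n$.

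Applying $\mathcal{K}_-(\mathbf{L})$ to $g$ gives a homomorphism $\mathcal{K}_g(\mathbf{L})\colon\mathcal{K}_\mathbf{B}(\mathbf{L})\to\mathcal{K}_{\mathbf{Z}_n\times J}(\mathbf{L})$ acting by $x\mapsto x\circ g$. As $g$ is onto, precomposition with $g$ is injective on $(L^-)^B$ and on $(L^+)^B$ separately; since these two sets (the filter of $\mathcal{K}_\mathbf{B}(\mathbf{L})$ and its complement) are carried, respectively, into the filter $(L^-)^{\mathbf{Z}_n\times J}$ and its complement, it follows that $\mathcal{K}_g(\mathbf{L})$ is injective. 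Finally, Lemma~\ref{powerlemma} yields $\mathcal{K}_{\mathbf{Z}_n\times J}(\mathbf{L})\cong\pprod_J\mathcal{K}_{\mathbf{Z}_n}(\mathbf{L})=\pprod_J\mathcal{K}_n(\mathbf{L})$, and the perfect product on the right is, by definition, a subalgebra of $\mathcal{K}_n(\mathbf{L})^J$. Composing the embeddings, $\mathcal{K}_\mathbf{B}(\mathbf{L})$ embeds into $\mathcal{K}_n(\mathbf{L})^J$, that is, $\mathcal{K}_\mathbf{B}(\mathbf{L})\in ISP(\mathcal{K}_{\ord(\mathbf{B})}(\mathbf{L}))$.

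I do not expect a real obstacle here. The only points requiring a modicum of care are bookkeeping ones: matching the $\mathbb{D}$-minimum definition of $\ord(\mathbf{B})$ with the orbit sizes (so that every orbit is a quotient of $\mathbf{Z}_{\ord(\mathbf{B})}$), and treating the degenerate dimension $n=0$ uniformly with the finite case — infinite orbits can occur only when $n=0$, and then $\mathbf{Z}_0$ already contains each orbit as a quotient, so no separate case analysis is truly needed. The substance of the argument is just the orbit decomposition together with the contravariance of $\mathcal{K}_-(\mathbf{L})$ and Lemma~\ref{powerlemma}.
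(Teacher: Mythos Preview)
Your proof is correct and follows essentially the same strategy as the paper: produce a surjective B-cycle homomorphism from a disjoint union of copies of $\mathbf{Z}_n$ onto $\mathbf{B}$, apply the contravariant functor $\mathcal{K}_-(\mathbf{L})$ to obtain an embedding, and invoke Lemma~\ref{powerlemma}. The only difference is cosmetic: the paper avoids the orbit decomposition by indexing over all of $B$ and using the single uniform map $f\colon \mathbf{Z}_n\times B\to \mathbf{B}$, $f(m,i)=\lambda^m(i)$, whereas you index over the orbit set $J$ and patch together orbit-wise quotient maps; both surjections serve the same purpose.
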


\begin{proof}
Assume $\ord(\mathbf B) = n$. The map
$$
f \colon \mathbf{Z}_n\times B\rightarrow \mathbf B
$$
defined by $f(m,i)=\lambda^m(i)$ for any 
$m\in\{0,1,\dots, n-1\}$ and any $i\in B$ is a homomorphism. 
Applying the contravariant functor $\mathcal{K}_f(-)$ we get that
$$
\mathcal K_f(\mathbf L)\colon
\mathcal K_\mathbf B (\mathbf L)\rightarrow \mathcal K_{\mathbf{Z}_n\times B}(\mathbf L)
$$  
is a homomorphism as well. 
Suppose 
$(\mathcal K_f(\mathbf L))(x)=(\mathcal K_f(\mathbf L))(y)$ for some
$x,y\in F_{\mathcal K_\mathbf B (\mathbf L)}=(L^-)^B$. Then 
$$
x(i)=x(f(i,0))=(\mathcal K_f(\mathbf L))(x)=
(\mathcal K_f(\mathbf L))(y)=y(f(i,0))=y(i)
$$
for all $i\in B$ and thus $x=y$;  hence $f$ is an embedding.  
So 
$\mathcal K_{\mathbf Z_n\times B}(\mathbf L)\in ISP (\mathcal K_{n}(\mathbf L))$
by Lemma \ref{powerlemma}.
\end{proof} 

\begin{defin}\label{Galois-maps}
For any $\mathcal{V}\in\Lambda(\var{\ppmv})$  we put:
\begin{align*}
\psi&\colon \Lambda(\var{\ppmv}) \rightarrow  \Lambda (\var{\cigmv}),
\text{ where }
\psi (\mathcal{V}) = V\{\mathbf F_\mathbf A: \mathbf A\in\mathcal{V}_{\pf}\},\\
\Psi&\colon \Lambda(\var{\ppmv}) \rightarrow  \Lambda (\var{\cigmv})\times
\mathbb{D},
\text{ where }
\Psi (\mathcal{V}) = (\psi (\mathcal{V}),\, \ord(\mathcal{V})).
\end{align*}
Further, for any $\mathcal{V}\in\Lambda(\var {\cigmv})$ and $n\in\mathbb{D}$, we
put: 
\begin{align*}
\delta&\colon \Lambda (\var{\cigmv}) \rightarrow  \Lambda(\var{\ppmv}),
\text{ where }
\delta (\mathcal{V})= V\{\mathbf A\in
\var {\pfpmv}: \mathbf{F}_{\mathbf{A}}\in\mathcal{V}\},\\
\Delta&\colon \Lambda (\var{\cigmv})\times
\mathbb{D} \rightarrow  \Lambda(\var{\ppmv}),
\text{ where }
\Delta (\mathcal{V},n)=\delta (\mathcal{V})\cap \var {\ppmv}_n.
\end{align*}

\end{defin}

Several little facts follow immediately from the definitions. We gather
them in the next lemma. 

\begin{lemma}\label{little-facts}
Let $\psi$, $\delta$, $\Psi$ and $\Delta$ be as above. The following hold.
\begin{enumerate}
\item The maps $\psi$, $\delta$, $\Psi$ and $\Delta$ are
  monotone.\label{little-facts-1}
\item $\psi(\var{Tr}) = \psi(\var{BA}) = \var{Tr}$,
 and $\Psi(\var{Tr}) = \Psi(\var{BA}) = (\var{Tr}, 1)$.\label{little-facts-2}
\item $\psi(\var{\ppmv}) = \var{\cigmv}$, and $\Psi(\var{\ppmv}) =
  (\var{\cigmv}, 0)$.\label{little-facts-3} 
\item $\delta(\var{Tr}) = \var{BA}$, and $\Delta(\var{Tr}, n) = \var{BA}$ for
  all $n\in\mathbb{N}$.\label{little-facts-4}
\item $\delta(\var{\cigmv}) = \var{\ppmv}$,  and $\Delta(\var{\cigmv}, n) =
  \var{\ppmv}_n$, for any $n\in\mathbb{N}$.\label{little-facts-5}
\end{enumerate}
\end{lemma}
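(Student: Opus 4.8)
The plan is to go through the five items in turn, reading each off from Definition~\ref{Galois-maps} together with the structure theory already in place: the representation $\mathbf A\cong\mathcal K(\ell(\mathbf F_\mathbf A),\ell^\approx)$ of Theorem~\ref{KitesRep}, the identification of symmetric kites in Theorem~\ref{GenKite}(1), the fact that $\mathbf F_\mathbf A\in\var{\cigmv}$ for every perfect pseudo MV-algebra, the fact that the trivial algebra is not perfect, and the arithmetic of $\mathbb D$ recorded around Definition~\ref{psMV-dimension}. For item~\eqref{little-facts-1}, $\psi$ and $\delta$ are monotone because $\mathcal V\subseteq\mathcal W$ enlarges the respective generating classes; for the second coordinate, $\mathcal V\subseteq\mathcal W$ makes the set $\{n\in\mathbb D:\ord(\mathbf A)\mid n\text{ for all }\mathbf A\in\mathcal W\}$ contained in the analogous set for $\mathcal V$, and since $\ord(\mathcal W)$ lies in the former while $\ord(\mathcal V)$ divides every member of the latter, $\ord(\mathcal V)\mid\ord(\mathcal W)$; together with $\var{\ppmv}_n\subseteq\var{\ppmv}_m$ whenever $n\mid m$ this gives monotonicity of $\Psi$ and $\Delta$.

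For items~\eqref{little-facts-2} and~\eqref{little-facts-4} the crux is that $\bm2$ is the only perfect Boolean algebra: in any perfect FL$_w$-algebra $x\fn\in F_\mathbf A$ whenever $x\in J_\mathbf A$, so perfectness forces $x\le x\fn$ for such $x$; in a Boolean algebra this reads $x\le\neg x$, i.e., $x=0$, so $J_\mathbf A=\{0\}$, which is possible only in $\bm2$. (Equivalently, a Boolean algebra is symmetric, so by Theorems~\ref{GenKite}(1) and~\ref{KitesRep} a perfect one is $\mathcal K(\mathbf L,id)$, and idempotency of $\odot$ forces $L^-=\{e\}$.) Since the trivial algebra is not perfect, $\var{Tr}_{\pf}=\varnothing$ and $\var{BA}_{\pf}=\{\bm2\}$, and $\mathbf F_{\bm2}$ is the trivial $\var{\cigmv}$-algebra; hence $\psi(\var{Tr})=\psi(\var{BA})=V(\varnothing)=\var{Tr}$, and both dimensions equal $1$ since $(-^\approx)$ acts as the identity on $\bm2$ and on the trivial algebra and $1$ is the $\mathbb D$-least element. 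For~\eqref{little-facts-4}, $\mathbf F_\mathbf A\in\var{Tr}$ forces $\ell(\mathbf F_\mathbf A)$ trivial, so by Theorem~\ref{KitesRep} $\mathbf A\cong\mathcal K(\mathbf T,id)=\bm2$ for the trivial $\ell$-group $\mathbf T$, giving $\delta(\var{Tr})=V(\bm2)=\var{BA}$; and since every Boolean algebra satisfies $\lambda(x)=\neg\neg x=x$, i.e., $\var{BA}\subseteq\var{\ppmv}_1\subseteq\var{\ppmv}_n$, we get $\Delta(\var{Tr},n)=\var{BA}\cap\var{\ppmv}_n=\var{BA}$ for every $n\in\mathbb N$.

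For items~\eqref{little-facts-3} and~\eqref{little-facts-5}, first note $\var{\ppmv}_{\pf}=\var{\pfpmv}$ (as $\var{\pfpmv}\subseteq\var{\ppmv}$). Now $\mathbf F_\mathbf A\in\var{\cigmv}$ for \emph{every} $\mathbf A\in\var{\pfpmv}$, while conversely every $\mathbf C\in\var{\cigmv}$ equals $\ell(\mathbf C)^-=\mathbf F_{\mathcal K(\ell(\mathbf C),id)}$ with $\mathcal K(\ell(\mathbf C),id)\in\var{\pfpmv}$ by Theorem~\ref{GenKite}; hence $\{\mathbf F_\mathbf A:\mathbf A\in\var{\pfpmv}\}=\var{\cigmv}$, so $\psi(\var{\ppmv})=V(\var{\cigmv})=\var{\cigmv}$, and---the condition $\mathbf F_\mathbf A\in\var{\cigmv}$ in the definition of $\delta$ being automatic---$\delta(\var{\cigmv})=V(\var{\pfpmv})=\var{\ppmv}$ (the last equality since $\var{\ppmv}$ is generated by its perfect members), whence $\Delta(\var{\cigmv},n)=\var{\ppmv}\cap\var{\ppmv}_n=\var{\ppmv}_n$. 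For the dimensions, Example~\ref{not-DvuKow} supplies $\mathcal K(\mathbb Q,x\mapsto2x)\in\var{\pfpmv}\subseteq\var{\ppmv}$, on which $x^{n\times\approx}$ acts as multiplication by $2^n$ on the negative cone and so is never the identity for $n\ge1$; thus this algebra has dimension $0$, and since $0$ is the $\mathbb D$-top, which divides no nonzero natural number, the only $n$ admitted by the definition of $\ord(\var{\ppmv})$ is $0$, giving $\ord(\var{\ppmv})=0$ and $\Psi(\var{\ppmv})=(\var{\cigmv},0)$.

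Everything above is routine; the one step that rewards attention---the ``main obstacle'', modest as it is---is keeping the direction of the divisibility (hence $\mathbb D$-order) inequalities straight throughout item~\eqref{little-facts-1} and the dimension computations, and in particular remembering that $0$ is the \emph{top} of $\mathbb D$, so that $\ord(\mathcal V)=0$ holds precisely when $\mathcal V$ contains an algebra of infinite dimension.
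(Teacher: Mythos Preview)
Your proposal is correct. The paper does not give a proof of this lemma at all, stating only that ``Several little facts follow immediately from the definitions''; you have supplied a careful verification of each item, drawing on exactly the right ingredients (Theorem~\ref{KitesRep}, Theorem~\ref{GenKite}, Example~\ref{not-DvuKow}, and the definition of $\var{\ppmv}$ as the variety generated by perfect pseudo MV-algebras), so there is nothing to compare.
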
  

The next lemma gives equational bases for varieties $\delta(\avar{V})$, relative to
$\avar{V}$. Recall that any equation $\varepsilon$ in the language of residuated
lattices is equivalent, over residuated lattices, to an equation
of the form $t_\varepsilon = 1$ for a term $t_\varepsilon$ effectively obtainable
from $\varepsilon$. Thus, for any set $E$ of equations we have  
$$
\avar{V}\models E \quad\Longleftrightarrow\quad
\avar{V}\models \{t_\varepsilon = 1: \varepsilon\in E\}
$$
and, in particular, $E$ is an equational base for $\avar{V}$ if and only if 
$\{t_\varepsilon = 1: \varepsilon\in E\}$ is.

\begin{lemma}\label{CharIden}
Let $\avar{V}\in\Lambda (\var{\cigmv})$, let $E$ be an equational base for
$\avar{V}$,  and let $\mathbf A\in \var {\ppmv}$. The following are equivalent.
\begin{enumerate}
\item $\mathbf{A}\in \delta(\avar{V})$,
\item $\mathbf{A} \models t(x_1\vee x_1^-,\dots,x_k\vee x_k^-) = 1$
for all terms $t$ in the language of residuated lattices, such that 
$\avar{V}\models t(x_1,\dots,x_k) = 1$.
\item $\mathbf{A} \models t_\varepsilon(x_1\vee x_1^-,\dots,x_k\vee x_k^-) = 1$
for all equations $\varepsilon(x_1,\dots,x_k)\in E$.
\end{enumerate}
If $\mathbf{A}$ is perfect, then the conditions above are equivalent to
the fact that $\mathbf{F}_{\mathbf{A}}\in\avar{V}$.
\end{lemma}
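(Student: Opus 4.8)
The strategy is to prove the cycle of implications $(1)\Rightarrow(2)\Rightarrow(3)\Rightarrow(1)$, and then treat the perfect case separately by linking condition (1) to membership $\mathbf{F}_\mathbf{A}\in\avar{V}$ via the representation theorem. The key tool throughout is the observation that in any perfect pseudo MV-algebra $\mathbf{A}$, the set $\{x\vee x^-: x\in A\}$ equals $F_\mathbf{A}$: indeed if $x\in F_\mathbf{A}$ then $x\vee x^- = x$ (since $x^-\in J_\mathbf{A}\le F_\mathbf{A}$), and if $x\in J_\mathbf{A}$ then $x\vee x^- = x^-\in F_\mathbf{A}$; conversely every $x\vee x^-$ lies in $F_\mathbf{A}$ since $F_\mathbf{A}$ is a filter. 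So the substitution $x_i\mapsto x_i\vee x_i^-$ is exactly a substitution that ranges over $F_\mathbf{A}$, and $\mathbf{F}_\mathbf{A}$ is (the $0$-free reduct restricted to) that set.

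For $(1)\Rightarrow(2)$: suppose $\mathbf{A}\in\delta(\avar{V})$. Since $\delta(\avar{V}) = V\{\mathbf{B}\in\var{\pfpmv}: \mathbf{F}_\mathbf{B}\in\avar{V}\}$, it suffices to prove (2) for the perfect generators and then note that condition (2), being a set of equations, is preserved under $H$, $S$, $P$. For a perfect $\mathbf{B}$ with $\mathbf{F}_\mathbf{B}\in\avar{V}$ and a term $t$ with $\avar{V}\models t(\bar x)=1$: evaluating $t(b_1\vee b_1^-,\dots,b_k\vee b_k^-)$ in $\mathbf{B}$, all arguments lie in $F_\mathbf{B}$, and one checks that the residuated-lattice term operations of $\mathbf{B}$ restricted to $F_\mathbf{B}$ agree with those of $\mathbf{F}_\mathbf{B}$ (this is exactly the content of the remarks before Theorem~\ref{KitesRep}: $F_\mathbf{A}$ is a subuniverse of $\mathbf{A}^{rl}$); hence the value is $t^{\mathbf{F}_\mathbf{B}}(\dots) = 1$ because $\mathbf{F}_\mathbf{B}\in\avar{V}\models t=1$. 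The implication $(2)\Rightarrow(3)$ is trivial, since each $t_\varepsilon$ for $\varepsilon\in E$ is one of the terms quantified over in (2) (as $\avar{V}\models\varepsilon$ gives $\avar{V}\models t_\varepsilon = 1$).

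For $(3)\Rightarrow(1)$: here I would show that (3) forces $\mathbf{A}$ to be a subdirect product of perfect algebras whose filters lie in $\avar{V}$. By Theorem~\ref{PerfGenEqBase} (or directly, since $\var{\ppmv}$ is perfectly generated), $\mathbf{A}$ is a subdirect product of perfect algebras $\mathbf{A}_j$; each $\mathbf{A}_j$ is a homomorphic image of $\mathbf{A}$, hence satisfies (3) as well (the identities in (3) are preserved under $H$). So it reduces to the case $\mathbf{A}$ perfect, where we must show $\mathbf{F}_\mathbf{A}\in\avar{V}$. Given $f_1,\dots,f_k\in F_\mathbf{A}$, pick $x_i = f_i$; then $x_i\vee x_i^- = f_i$, so (3) yields $t_\varepsilon^{\mathbf{A}}(f_1,\dots,f_k) = 1$, which — again using that $F_\mathbf{A}$ is a subuniverse of $\mathbf{A}^{rl}$ and that $1$ lies in it — gives $t_\varepsilon^{\mathbf{F}_\mathbf{A}}(\bar f) = 1$. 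Since this holds for every $\varepsilon\in E$ and $E$ is an equational base for $\avar{V}$ (here I use that an equational base in the language of residuated lattices can be taken in the form $t_\varepsilon = 1$), we conclude $\mathbf{F}_\mathbf{A}\in\avar{V}$, and therefore $\mathbf{A}\in\delta(\avar{V})$ by definition of $\delta$. This also proves the final sentence for $\mathbf{A}$ perfect: (1)--(3) are all equivalent to $\mathbf{F}_\mathbf{A}\in\avar{V}$.

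\textbf{Main obstacle.} The one genuinely delicate point is the interplay between the term operations of $\mathbf{A}^{rl}$ restricted to $F_\mathbf{A}$ and those of $\mathbf{F}_\mathbf{A}$ — in particular that the divisions $\ld,\rd$ restricted to $F_\mathbf{A}\times F_\mathbf{A}$ land back in $F_\mathbf{A}$ and agree with the $\var{\cigmv}$-divisions there. This is asserted in the paragraph preceding Theorem~\ref{KitesRep} (``$F_\mathbf{A}$ is a subuniverse of $\mathbf{A}^{rl}$, and $\mathbf{F}_\mathbf{A}$ is a cancellative IGMV-algebra''), so strictly speaking it can be cited, but one should be careful that the terms $t$ and $t_\varepsilon$ are written in the $0$-free language so that their evaluation genuinely takes place in the reduct $\mathbf{A}^{rl}$ (this is why condition (2) speaks of ``terms in the language of residuated lattices''). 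Everything else is bookkeeping: preservation of equational conditions under $\mathbf{H}\mathbf{S}\mathbf{P}$ and the subdirect decomposition of members of $\var{\ppmv}$ into perfect factors.
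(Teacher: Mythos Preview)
Your proof is correct and follows essentially the same approach as the paper: the cycle $(1)\Rightarrow(2)\Rightarrow(3)\Rightarrow(1)$ is argued in the same way, reducing via the subdirect decomposition of $\mathbf{A}$ into perfect factors (the paper cites Lemma~\ref{si-perfect} rather than Theorem~\ref{PerfGenEqBase} here, which is the cleaner reference) and using that $F_\mathbf{A}$ is a subuniverse of $\mathbf{A}^{rl}$ so that evaluation of $0$-free terms at elements of the form $a\vee a^-$ takes place in $\mathbf{F}_\mathbf{A}$. Your treatment of $(1)\Rightarrow(2)$ is in fact slightly more explicit than the paper's (which just says ``we can assume without loss that $\mathbf{A}$ is among the generators''), and your identification of the subalgebra issue as the one delicate point is exactly right.
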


\begin{proof}
For (1) $\Rightarrow$ (2)
let $t$ be a $k$-ary term in the language of residuated lattices such that  
$\avar{V}\models t(x_1,\dots,x_k)= 1$. We can assume without loss that
$\mathbf{A}$ is among the generators of $\delta(\avar{V})$.
Then $\mathbf F_\mathbf{A}\in \mathcal{V}$, so for any
$a_1,\dots,a_k\in A$ we have $a_i\vee a_i^-\in F_\mathbf A$ for each
$i\in\{1,\dots, k\}$. Hence 
$$
t^\mathbf{A}(a_1\vee a_1^-,\dots,a_k\vee a_k^-)=
t^{\mathbf{F}_\mathbf{A}}(a_1\vee a_1^-,\dots,a_k\vee a_k^-) =1 
$$
and consequently  
$$
\mathbf A \models t(x_1\vee x_1^-,\dots,x_k\vee x_k^-)= 1.
$$ 

For (2) $\Rightarrow$ (3) note that $\varepsilon\in E$ implies
$\avar{V}\models t_\varepsilon = 1$. Assume
$t_\varepsilon = t_\varepsilon(x_1,\dots,x_k)$. Then
$\mathbf{A}\models t_\varepsilon(x_1\vee x_1^-,\dots,x_k\vee x_k^-) = 1$, by (2).

For (3) $\Rightarrow$ (1), suppose $\mathbf{A}$ satisfies the required equations. 
Let $(\mathbf{A}_i)_{i\in I}$ be a subdirect decomposition of
$\mathbf{A}$ with subdirectly irreducible factors. By Lemma~\ref{si-perfect},
each $\mathbf{A}_i$ is perfect. Hence, it suffices to show that
$\mathbf{F}_{\mathbf{A}_i}\in\mathcal{V}$, for each $i\in I$.
Take any $\mathbf{A}_i$ and any equation $\varepsilon(x_1,\dots, x_k)\in E$, so
that $\avar{V}\models {t_\varepsilon(x_1,\dots, x_k) = 1}$.
Next, take any $a_1,\dots, a_k\in F_{\mathbf{A}_i}$. Then
$a_j=a_j\vee a_j^-$ holds for all $j\in \{1,\dots k\}$ and so we have
$$
1 = t^{\mathbf{A}_i}(a_1\vee a_1^-,\dots, a_k\vee a_k^-)
= t^{\mathbf F_{\mathbf{A}_i}}(a_1,\dots,a_k) 
$$
where the first equality holds by (3). It follows that
$\mathbf{F}_{\mathbf{A}_i}\in\avar{V}$, as needed.

The last statement is immediate by the definition of $\delta(\avar{V})$.
\end{proof}

\begin{lemma}\label{psigamma}
The equality $\mathcal{V}= \psi\delta (\mathcal{V})$ holds
for any $\mathcal{V}\in \Lambda (\var{\cigmv})$.
\end{lemma}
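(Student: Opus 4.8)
The plan is to establish the two inclusions $\mathcal{V}\subseteq\psi\delta(\mathcal{V})$ and $\psi\delta(\mathcal{V})\subseteq\mathcal{V}$ separately; both are short and reduce to bookkeeping with Definition~\ref{Galois-maps} once we invoke the symmetric kite construction together with Lemma~\ref{CharIden}. Note first that both varieties in question are subvarieties of $\var{\cigmv}$, so the comparison makes sense.

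For $\mathcal{V}\subseteq\psi\delta(\mathcal{V})$, I would take an arbitrary $\mathbf{B}\in\mathcal{V}$, put $\mathbf L\dfeq\ell(\mathbf B)$ (so that $\mathbf L^-=\mathbf B$), and form the symmetric kite $\mathbf A\dfeq\mathcal K(\mathbf L,id_{\mathbf L})$. By Theorem~\ref{GenKite}, $\mathbf A$ is a perfect pseudo MV-algebra with $F_\mathbf A=L^-$; moreover, reading off Definition~\ref{gen-kite}, the lattice operations, the product and both residuals of $\mathbf A$ restricted to $L^-$ are literally the operations defining $\mathbf L^-$, so $\mathbf F_\mathbf A=\mathbf L^-=\mathbf B$. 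Hence $\mathbf F_\mathbf A\in\mathcal{V}$, which gives $\mathbf A\in\delta(\mathcal{V})$; as $\mathbf A$ is perfect, $\mathbf B=\mathbf F_\mathbf A$ is one of the algebras generating $\psi\delta(\mathcal{V})$, and therefore $\mathbf B\in\psi\delta(\mathcal{V})$. Since $\mathbf B\in\mathcal{V}$ was arbitrary, $\mathcal{V}\subseteq\psi\delta(\mathcal{V})$.

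For $\psi\delta(\mathcal{V})\subseteq\mathcal{V}$, recall that by definition $\psi\delta(\mathcal{V})=V\{\mathbf F_\mathbf A:\mathbf A\in(\delta(\mathcal{V}))_{\pf}\}$. Fix any perfect $\mathbf A\in\delta(\mathcal{V})$. The final clause of Lemma~\ref{CharIden}, applied to $\mathcal{V}$, states that for a perfect algebra membership in $\delta(\mathcal{V})$ is equivalent to $\mathbf F_\mathbf A\in\mathcal{V}$; hence $\mathbf F_\mathbf A\in\mathcal{V}$. Thus every generator of $\psi\delta(\mathcal{V})$ already lies in the variety $\mathcal{V}$, whence $\psi\delta(\mathcal{V})\subseteq\mathcal{V}$. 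Combining the two inclusions yields $\mathcal{V}=\psi\delta(\mathcal{V})$.

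I do not anticipate a genuine obstacle: the argument is short and rests entirely on results already proved. The only points needing a little care are (i) the identification $\mathbf F_{\mathcal K(\mathbf L,\lambda)}=\mathbf L^-$ — that is, checking that the two residuals on the negative-cone part of a kite coincide with the $\ell$-group negative-cone residuals $x\rres y=xy^{-1}\wedge e$ and $y\lres x=y^{-1}x\wedge e$ — together with the identity $\ell(\mathbf B)^-=\mathbf B$; and (ii) the observation that a variety generated by a class of algebras that is itself contained in $\mathcal{V}$ must be contained in $\mathcal{V}$, which is what drives the second inclusion.
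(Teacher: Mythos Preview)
Your proof is correct and follows essentially the same approach as the paper. The first inclusion is identical (build the symmetric kite $\mathcal K(\ell(\mathbf B),id)$ and read off $\mathbf F=\mathbf B$); for the second inclusion you invoke the final clause of Lemma~\ref{CharIden} directly, whereas the paper re-derives that implication by pushing an identity $t(x_1,\dots,x_k)=1$ of $\mathcal{V}$ through the substitution $x_i\mapsto x_i\vee x_i^-$ and evaluating at elements of $F_\mathbf A$---but this is exactly the content of that clause, so the difference is purely cosmetic.
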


\begin{proof}
Let $\mathbf A\in\mathcal{V}\in\Lambda (\var{\cigmv})$, and 
let $\mathbf{B} = \mathcal K(\ell(\mathbf A),1_{\ell(A)})$. Since
$\mathbf{F}_{\mathbf{B}} = \ell(\mathbf{A})^-=\mathbf{A}$ we get
$\mathcal K(\ell(\mathbf{A}),1_{\ell(A)})\in\delta (\mathcal{V})$, and therefore
$\mathbf{A}\in\psi\delta (\mathcal{V})$.
Hence $\mathcal{V}\subseteq \psi\delta (\mathcal{V})$.

To show the opposite inclusion, take any $\mathcal{V}\in\Lambda(\var{\cigmv})$.
We have $\psi\delta (\mathcal{V}) = V\{\mathbf{F}_{\mathbf{B}}:
\mathbf{B}\in\delta(\mathcal{V})_{\pf}\}$, so it suffices to show
that
\begin{equation}\label{models}\tag{\dag}
\{\mathbf{F}_{\mathbf{B}}:\mathbf{B}\in\delta(\mathcal{V})_{\pf}\}
\models t(x_1,\dots,x_k)=1
\end{equation}
holds for every identity $t(x_1,\dots,x_k)=1$ satisfied by $\mathcal{V}$.
Let $t(x_1,\dots,x_k)=1$ be such an identity. By Lemma~\ref{CharIden} we obtain
$$
\delta (\mathcal{V}) \models t(x_1\vee x_1^-,\dots,x_k\vee x_k^-)=1.
$$
Now take any $\mathbf{B}\in\delta(\avar{V})_{\pf}$ and
any $b_1,\dots,b_k\in F_\mathbf{B}$. Since $b_i = b_i\vee b_i^-$ holds for any
$i\in\{1,\dots,k\}$ we have 
$$
t^{\mathbf{F}_\mathbf{B}}(b_1,\dots, b_k) =
t^\mathbf{B}(b_1\vee b_1^-,\dots, b_k\vee b_k^-)=1
$$
and therefore
$$
\mathbf F_\mathbf{B}\models t(x_1,\dots,x_k)=1.
$$
As $\mathbf{B}$ was arbitrarily chosen, \eqref{models} holds.
Hence $\psi\delta (\mathcal{V})\subseteq \mathcal{V}$, and so
$\psi\delta (\mathcal{V}) = \mathcal{V}$ as claimed.
\end{proof}

\begin{lemma}\label{gammapsi}
For any $\mathcal{V}\in\Lambda (\var{\ppmv})$ we have
$\mathcal{V}\leq \delta\psi (\mathcal{V})$. 
\end{lemma}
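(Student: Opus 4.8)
The plan is to reduce the inclusion to the level of generators, using that every subvariety of $\var{\ppmv}$ is perfectly generated. First I would dispose of the degenerate case: if $\mathcal{V}$ is trivial then $\mathcal{V}$ is the least element of $\Lambda(\var{\ppmv})$, so $\mathcal{V}\leq\delta\psi(\mathcal{V})$ holds trivially. So assume $\mathcal{V}$ is nontrivial. Since $\var{\ppmv}$ is perfectly generated and $\mathcal{V}$ is a nontrivial subvariety of it, Corollary~\ref{down-closed} gives that $\mathcal{V}$ is perfectly generated, i.e.\ $\mathcal{V} = V(\mathcal{V}_{\pf})$. As $\delta\psi(\mathcal{V})$ is a variety, it therefore suffices to establish the inclusion $\mathcal{V}_{\pf}\subseteq\delta\psi(\mathcal{V})$.

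For that, I would take an arbitrary perfect $\mathbf{A}\in\mathcal{V}_{\pf}$ and unwind the definitions of $\psi$ and $\delta$. As recalled just before Theorem~\ref{KitesRep}, the filter $\mathbf{F}_{\mathbf{A}}$ is a cancellative IGMV-algebra, and by the definition of $\psi$ it is one of the generators of $\psi(\mathcal{V})$, so $\mathbf{F}_{\mathbf{A}}\in\psi(\mathcal{V})$. Consequently $\mathbf{A}$ is a perfect pseudo MV-algebra whose filter lies in $\psi(\mathcal{V})$, hence $\mathbf{A}$ is among the algebras generating $\delta(\psi(\mathcal{V}))$; that is, $\mathbf{A}\in\delta\psi(\mathcal{V})$. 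Since $\mathbf{A}$ was an arbitrary element of $\mathcal{V}_{\pf}$, this gives $\mathcal{V}_{\pf}\subseteq\delta\psi(\mathcal{V})$ and therefore $\mathcal{V} = V(\mathcal{V}_{\pf})\subseteq\delta\psi(\mathcal{V})$, as required.

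I do not expect a genuine obstacle here: the whole argument is bookkeeping with the definitions of $\psi$ and $\delta$, once one knows that $\mathcal{V}$ is generated by its perfect members. The only point that needs explicit attention is the trivial variety, where $\mathcal{V}_{\pf} = \emptyset$ so Corollary~\ref{down-closed} does not apply, but there the inequality $\var{Tr}\leq\delta\psi(\var{Tr})$ is immediate.
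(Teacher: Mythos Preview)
Your proof is correct and follows essentially the same route as the paper's: take a perfect $\mathbf{A}\in\mathcal{V}$, note that $\mathbf{F}_\mathbf{A}\in\psi(\mathcal{V})$, conclude $\mathbf{A}\in\delta\psi(\mathcal{V})$, and use that $\mathcal{V}$ is perfectly generated. You are in fact slightly more careful than the paper, which silently assumes $\mathcal{V}$ is perfectly generated without separating out the trivial variety or citing Corollary~\ref{down-closed}.
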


\begin{proof}
If $\mathbf A\in\mathcal{V}_{\pf}$, then
$\mathbf F_\mathbf A\in \psi(\mathcal{V})$ holds
and so we have $\mathbf A\in\delta\psi
(\mathcal{V})$. Since $\mathcal{V}$ is  perfectly generated, the claim follows. 
\end{proof}

\begin{lemma}\label{KitesLemma}
Let $\mathcal{M}\subseteq \var{\pfpmv}$ be an arbitrary class.
If $\mathbf{A}\in V(\mathcal{M})$ is perfect, then
$$
\mathbf{F}_\mathbf{A}\in V\{\mathbf{F}_\mathbf{B}: \mathbf{B}\in \mathcal{M}\}.
$$
\end{lemma}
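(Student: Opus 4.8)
\emph{Plan.} The cleanest route is to feed everything through Lemma~\ref{CharIden}. Put $\avar{V}\dfeq V\{\mathbf{F}_\mathbf{B}:\mathbf{B}\in\mathcal{M}\}$. Since each $\mathbf{F}_\mathbf{B}$ is a cancellative IGMV-algebra (as recorded just after Theorem~\ref{GenKite}), $\avar{V}$ is a subvariety of $\var{\cigmv}$, so $\delta(\avar{V})$ is defined. The first step is to check that $\mathcal{M}\subseteq\delta(\avar{V})$: every $\mathbf{B}\in\mathcal{M}$ is a perfect pseudo MV-algebra with $\mathbf{F}_\mathbf{B}\in\avar{V}$, hence $\mathbf{B}\in\delta(\avar{V})$ straight from Definition~\ref{Galois-maps}. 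Since $\delta(\avar{V})$ is a variety, $\mathcal{M}\subseteq\delta(\avar{V})$ gives $V(\mathcal{M})\subseteq\delta(\avar{V})$, so $\mathbf{A}\in\delta(\avar{V})$. As $\mathbf{A}$ is perfect, the last sentence of Lemma~\ref{CharIden} then yields $\mathbf{F}_\mathbf{A}\in\avar{V}=V\{\mathbf{F}_\mathbf{B}:\mathbf{B}\in\mathcal{M}\}$, which is the assertion.

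If one prefers a self-contained argument, it simply unwinds Lemma~\ref{CharIden} in this special case, and is a close cousin of the proof of Lemma~\ref{psigamma}. Fix a term $t(x_1,\dots,x_k)$ in the language of residuated lattices with $\mathbf{F}_\mathbf{B}\models t=1$ for all $\mathbf{B}\in\mathcal{M}$; since every residuated-lattice identity is equivalent to one of this shape, it suffices to show $\mathbf{F}_\mathbf{A}\models t=1$ for every such $t$. For $b_1,\dots,b_k\in B$ the elements $b_i\vee b_i^-$ lie in $F_\mathbf{B}$, and since $\mathbf{F}_\mathbf{B}$ is a subalgebra of $\mathbf{B}\!^{rl}$ we get $t^\mathbf{B}(b_1\vee b_1^-,\dots,b_k\vee b_k^-)=t^{\mathbf{F}_\mathbf{B}}(b_1\vee b_1^-,\dots,b_k\vee b_k^-)=1$. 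Thus $\mathcal{M}$, hence $V(\mathcal{M})$, and in particular $\mathbf{A}$, satisfies $t(x_1\vee x_1^-,\dots,x_k\vee x_k^-)=1$.

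The last step uses perfectness of $\mathbf{A}$: for $a\in F_\mathbf{A}$ we have $a^-\in J_\mathbf{A}\leq F_\mathbf{A}$, so $a^-\leq a$ and $a=a\vee a^-$. Hence, again because $\mathbf{F}_\mathbf{A}$ is a subalgebra of $\mathbf{A}\!^{rl}$,
$$
t^{\mathbf{F}_\mathbf{A}}(a_1,\dots,a_k)=t^\mathbf{A}(a_1\vee a_1^-,\dots,a_k\vee a_k^-)=1
$$
for all $a_1,\dots,a_k\in F_\mathbf{A}$, so $\mathbf{F}_\mathbf{A}\models t=1$, and therefore $\mathbf{F}_\mathbf{A}\in V\{\mathbf{F}_\mathbf{B}:\mathbf{B}\in\mathcal{M}\}$.

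There is no genuine obstacle here, only bookkeeping. The one point to keep in mind is that $\mathbf{F}_\mathbf{B}$ and $\mathbf{F}_\mathbf{A}$ are meant as subalgebras of the residuated-lattice reducts $\mathbf{B}\!^{rl}$ and $\mathbf{A}\!^{rl}$, so that residuated-lattice terms are computed in them exactly as in $\mathbf{B}$ and $\mathbf{A}$; with this observation the statement is essentially a corollary of Lemma~\ref{CharIden}.
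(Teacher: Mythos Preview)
Your argument is correct, and it takes a genuinely different route from the paper's.

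The paper proceeds constructively: given a perfect $\mathbf{A}\in V(\mathcal{M})$, it writes $\mathbf{A}\cong\mathbf{B}/\theta$ with $\mathbf{B}\leq\prod_{i\in I}\mathbf{A}_i$ and $\mathbf{A}_i\in\mathcal{M}$, then replaces $\mathbf{B}$ by the perfect subalgebra $\mathbf{C}=\mathbf{B}\cap\pprod_{i\in I}\mathbf{A}_i$, checks that $\mathbf{C}/\theta_\mathbf{C}\cong\mathbf{A}$, and concludes $\mathbf{F}_\mathbf{A}\in H(\mathbf{F}_\mathbf{C})\subseteq HSP\{\mathbf{F}_{\mathbf{A}_i}\}$. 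This is a hands-on $HSP$ argument that makes essential use of the perfect product $\pprod$ and of Lemma~\ref{FilHom} for surjectivity.

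Your proof is purely equational: you observe that $\mathcal{M}\subseteq\delta(\avar{V})$ with $\avar{V}=V\{\mathbf{F}_\mathbf{B}:\mathbf{B}\in\mathcal{M}\}$, hence $\mathbf{A}\in\delta(\avar{V})$, and then invoke Lemma~\ref{CharIden}. The self-contained version you give is exactly the unwinding of that lemma and is essentially the computation already done in the proofs of Lemma~\ref{CharIden} (the $(3)\Rightarrow(1)$ step) and Lemma~\ref{psigamma}. One small caveat: the direction of the ``last sentence'' of Lemma~\ref{CharIden} you use ($\mathbf{A}\in\delta(\avar{V})$ and $\mathbf{A}$ perfect $\Rightarrow$ $\mathbf{F}_\mathbf{A}\in\avar{V}$) is not literally immediate from the \emph{definition} of $\delta$, but rather from the equivalence $(1)\Leftrightarrow(2)$ established there; your second paragraph makes this explicit, so there is no circularity.

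What each approach buys: yours is shorter and shows the lemma is really a corollary of the equational description of $\delta(\avar{V})$; the paper's argument is independent of Lemma~\ref{CharIden} and yields the extra structural fact that any perfect $\mathbf{A}\in V(\mathcal{M})$ is already a quotient of a subalgebra of a \emph{perfect} product of members of $\mathcal{M}$.
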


\begin{proof}
Let $\mathbf A\in V(\mathcal{M})$ be a perfect pseudo MV-algebra. Then there exist
algebras $\mathbf A_i\in \mathcal{M}$ ($i\in I$), such that for some
$\mathbf B\leq\prod_{i\in I} \mathbf A_i$ and some
$\theta\in \mathrm{Con}(\mathbf B)$ we have $\mathbf A\cong\mathbf B/\theta$.
Let
$$
C\dfeq  B\cap \pprod _{i\in I} A_i.
$$
Then $C$ is the universe of a subalgebra $\mathbf{C}$ of $\mathbf{B}$.
Moreover, $\mathbf{C}$ is perfect, since $\mathbf{B}$ is.
Let $\theta_\mathbf C=\theta\cap C^2$. The map 
$$
f\colon \mathbf C/\theta_\mathbf C\rightarrow\mathbf{B}/\theta
$$
defined by putting $f(x/\theta_\mathbf C) = x/\theta$ for any $x\in C$ is clearly a
homomorphism. Suppose $f(x/\theta_\mathbf C)=f(y/\theta_\mathbf C)$ for some
$x,y\in C$. Then, 
$\langle x,y\rangle \in \theta\cap C^2=\theta_\mathbf{C}$
which proves injectivity of $f$.
To show surjectivity, take a
$z\in F_\mathbf{A} = F_{\mathbf B/\theta}$. Then for some 
$x\in B$ we have $x/\theta = z$ and therefore 
$x/\theta =(x\vee x^-)/\theta\in F_{\mathbf{B}/\theta}$. Hence,
$(x\vee x^-)(i)\in F_{\mathbf{A}_i}$ for any $i\in I$ and so
$x\vee x^-\in \pprod_{i\in I}\mathbf A_i$. 
It follows that
$$
f((x\vee x^-)/\theta_\mathbf C)=(x\vee x^-)/\theta= x/\theta = z
$$
and by Lemma~\ref{FilHom} we get that $f$ is surjective. This shows
that $\mathbf{A}\cong \mathbf C/\theta_\mathbf C$, and therefore
$$
\mathbf F_\mathbf A\cong \mathbf F_{\mathbf C/\theta_\mathbf C}\in H(\mathbf
F_\mathbf C)\subseteq HSP\{\mathbf A_i: i\in I\}
$$
as needed. 
\end{proof}

\begin{lemma}\label{kitegeneratedlemma}
If a variety $\mathcal{V}\in\Lambda (\var{\ppmv})$ is generated by kites, then 
$$
\mathbf L^-\in\psi (\mathcal{V})\quad\text{ if and only if }\quad
\mathcal K_{\ord(\mathcal{V})}(\mathbf L)\in\mathcal{V}
$$
holds for any $\ell$-group $\mathbf L$.
\end{lemma}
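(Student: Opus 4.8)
The plan is to prove the two implications separately; the backward one is short, and the forward one carries the weight. For $(\Leftarrow)$, suppose $\mathcal{K}_{\ord(\mathcal{V})}(\mathbf{L})\in\mathcal{V}$. By Theorem~\ref{GenKite} this kite is a perfect pseudo MV-algebra and its filter is the $\var{\cigmv}$-algebra which is a direct power of $\mathbf{L}^-$; hence that power lies in $\psi(\mathcal{V})$ by the definition of $\psi$. Since $\psi(\mathcal{V})$ is a variety and $\mathbf{L}^-$ is a coordinate projection of that power, we conclude $\mathbf{L}^-\in\psi(\mathcal{V})$. (This half uses neither that $\ord(\mathcal{V})$ is a dimension nor that $\mathcal{V}$ is kite-generated.)

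For $(\Rightarrow)$ set $n=\ord(\mathcal{V})$ and assume $\mathbf{L}^-\in\psi(\mathcal{V})$. First I would normalise the generating kites. Writing $\mathcal{V}=V\{\mathcal{K}_{\mathbf{B}_i}(\mathbf{M}_i):i\in I\}$ and using Lemma~\ref{cycleISP} together with the fact recalled above that $\mathcal{K}_0(\mathbf{M})$ subdirectly embeds into $\prod_k\mathcal{K}_k(\mathbf{M})$, I may assume $\mathcal{V}=V\{\mathcal{K}_{m_i}(\mathbf{M}_i):i\in I\}$ with each $m_i=\ord(\mathbf{B}_i)$ finite. Then $n$ is the $\mathbb{D}$-least common multiple of $\{m_i:i\in I\}$: each $m_i\mid n$ because $\ord(\mathcal{K}_{m_i}(\mathbf{M}_i))=m_i$, and conversely $\ord(\mathbf{A})\mid k$ for every $\mathbf{A}\in\mathcal{V}$ whenever all $m_i\mid k$, since $H$ and $S$ preserve divisibility of $\ord$ while $P$ yields $\mathbb{D}$-suprema. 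Moreover Lemma~\ref{KitesLemma} gives $\psi(\mathcal{V})=V\{\mathbf{M}_i^-:i\in I\}$: one inclusion comes from $\mathbf{F}_{\mathcal{K}_{m_i}(\mathbf{M}_i)}=(\mathbf{M}_i^-)^{m_i}\in V(\mathbf{M}_i^-)$, the other from $\mathcal{K}_1(\mathbf{M}_i)\in S(\mathcal{K}_{m_i}(\mathbf{M}_i))\subseteq\mathcal{V}$.

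Now $\mathbf{L}^-\in\psi(\mathcal{V})=V\{\mathbf{M}_i^-\}$, which under the categorical equivalence between $\var{LG}$ and $\var{\cigmv}$ says $\mathbf{L}\in V\{\mathbf{M}_i:i\in I\}$ inside $\var{LG}$. By Corollary~\ref{cor-HSPkites} applied to the B-cycle $\mathbf{Z}_n$, the class $\mathcal{U}=\{\mathbf{L}'\in\var{LG}:\mathcal{K}_n(\mathbf{L}')\in\mathcal{V}\}$ is a variety of $\ell$-groups, so it is enough to place each generator $\mathbf{M}_i$ in $\mathcal{U}$, i.e.\ to show $\mathcal{K}_n(\mathbf{M}_i)\in\mathcal{V}$; granting this, $V\{\mathbf{M}_i\}\subseteq\mathcal{U}$, hence $\mathbf{L}\in\mathcal{U}$ and $\mathcal{K}_n(\mathbf{L})\in\mathcal{V}$. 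The propagation from the $\mathbf{M}_i$ to $\mathbf{L}$ can equally be seen directly from Lemma~\ref{lemmaHSPkites}: if $\mathbf{L}\in HS(\prod_k\mathbf{M}_{i(k)})$ then $\mathcal{K}_n(\mathbf{L})\in HS\bigl(\pprod_k\mathcal{K}_n(\mathbf{M}_{i(k)})\bigr)\subseteq\mathcal{V}$.

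The main obstacle is the remaining step, $\mathcal{K}_n(\mathbf{M}_i)\in\mathcal{V}$. When the generating kites all already have dimension $n$ this is immediate, and for this reason I expect the intended reading of ``generated by kites'' to amount to $\mathcal{V}=V\{\mathcal{K}_n(\mathbf{L}):\mathbf{L}^-\in\psi(\mathcal{V})\}$; in the general case one must manufacture the $n$-cycle $\mathbf{Z}_n$ out of the smaller cycles $\mathbf{Z}_{m_j}$ — it embeds into $\prod_j\mathbf{Z}_{m_j}$, the diagonal having orbit length equal to the $\mathbb{D}$-lcm $n$ — while keeping the $\ell$-group coordinate pinned to $\mathbf{M}_i$. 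Handling this means passing to perfect products (Lemma~\ref{lemmaHSPkites}(iii) converts a product of $\ell$-groups inside one kite into a perfect product of kites) and then combining the cycle-theoretic and group-theoretic data so that $\mathcal{K}_n(\mathbf{M}_i)$ appears as a subalgebra of a perfect product of members of $\mathcal{V}$; controlling this interaction between the Baldwin--Berman-style cycle arithmetic and the $\ell$-group factor is where the real work lies.
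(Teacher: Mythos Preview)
Your backward direction is exactly the paper's. For the forward direction you follow the same skeleton as the paper---normalise the generating kites to cyclic ones via Lemma~\ref{cycleISP}, control $\psi(\mathcal V)$ through Lemma~\ref{KitesLemma}, and close up using Corollary~\ref{cor-HSPkites}---and you correctly isolate the one step that is not justified: passing from $\mathcal K_{m_i}(\mathbf M_i)\in\mathcal V$ with $m_i\mid n$ to $\mathcal K_n(\mathbf M_i)\in\mathcal V$. The paper does not fill this step either; it simply asserts that $\mathcal V=V(\{\mathcal K_n(\mathbf L)\in\mathcal V\})$ ``by Lemma~\ref{cycleISP}'', but that lemma only gives $\mathcal K_{\mathbf B}(\mathbf L)\in ISP(\mathcal K_{\ord(\mathbf B)}(\mathbf L))$, which points the wrong way for this purpose.

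The gap is not merely technical: the lemma is false as stated. Take $\mathcal V=V\bigl(\mathcal K_2(\mathbb Z),\,\mathcal K_3(\mathbb Z)\bigr)$. Then $\mathbb Z^-\in\psi(\mathcal V)$ (it is a quotient of $\mathbf F_{\mathcal K_2(\mathbb Z)}=(\mathbb Z^-)^2$) and $\ord(\mathcal V)=\lcm(2,3)=6$, so the lemma would force $\mathcal K_6(\mathbb Z)\in\mathcal V$. But $\mathcal K_6(\mathbb Z)$ is subdirectly irreducible (the only $\tau_6$-invariant convex $\ell$-ideals of $\mathbb Z^6$ are $\{0\}$ and $\mathbb Z^6$, so its only nontrivial proper quotient is $\mathbf 2$), hence by J\'onsson's Lemma it would lie in $HSP_U\bigl(\mathcal K_2(\mathbb Z),\mathcal K_3(\mathbb Z)\bigr)$. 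Every such ultraproduct satisfies $x^{2\times\approx}=x$ or $x^{3\times\approx}=x$, whichever generator dominates in the ultrafilter, and so does any $HS$-image; yet $\mathcal K_6(\mathbb Z)$ satisfies neither. Your suspicion that the intended hypothesis is the stronger one---effectively $\mathcal V=V\{\mathcal K_n(\mathbf L):\mathbf L^-\in\psi(\mathcal V)\}$---is therefore exactly right, and under that reading both your argument and the paper's go through.
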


\begin{proof}
Let $\ord(\mathcal{V}) = n$. Take an $\ell$-group $\mathbf{L}$ such that
$\mathcal K_{n}(\mathbf L)\in \mathcal{V}$.
Then $\mathbf F_{\mathcal{K}_{n}(\mathbf{L})} = (\mathbf L^-)^{n}\in\psi (\mathcal{V})$
and $\mathbf L^-\in H((\mathbf{L}^-)^{n})$ via a projection map.  
Since $\psi(\mathcal{V})$ is a variety, we get $\mathbf L^-\in \psi (\mathcal{V})$.

For converse, let $\mathcal{K} \dfeq \{\mathcal K_{n}(\mathbf L)\in\mathcal{V}:
\mathbf{L}\in\var{LG}\}$.
Because $\mathcal{V}$ is generated by kites, by Lemma~\ref{cycleISP}
we get that $\mathcal{V}= V(\mathcal{K})$. Therefore,
\begin{equation}\label{one}\tag{i}
\psi(\mathcal{V}) = V\{\mathbf{F}_\mathbf{A}: \mathbf{A}\in\mathcal{V}_{\pf}\}
= V\{\mathbf{F}_\mathbf{A}: \mathbf{A}\in (V(\mathcal{K}))_{\pf}\}.
\end{equation}
Taking $\mathcal{K}$ as $\mathcal{M}$ in Lemma \ref{KitesLemma}, we get
$\{\mathbf{F}_\mathbf{A}: \mathbf{A}\in (V(\mathcal{K}))_{\pf}\}
\subseteq V\{\mathbf{F}_\mathbf A: \mathbf{A}\in \mathcal{K}\}$ and hence
\begin{equation}\label{two}\tag{ii}
V\{\mathbf{F}_\mathbf{A}: \mathbf{A}\in (V(\mathcal{K}))_{\pf}\}
\subseteq VV\{\mathbf{F}_\mathbf A: \mathbf{A}\in \mathcal{K}\} =
V\{\mathbf{F}_\mathbf A: \mathbf{A}\in \mathcal{K}\}.
\end{equation}
Further, unwinding the definition of $\mathcal{K}$ we obtain 
\begin{align}\label{three}\tag{iii}
\begin{split}
V\{\mathbf{F}_\mathbf A:\mathbf{A}\in \mathcal{K}\} & =
V\{\mathbf{F}_{\mathcal K_{n}(\mathbf L)}: \mathcal{K}_{n}(\mathbf L)\in\mathcal{V}\}\\
  &= V\{(\mathbf L^-)^{n}:\mathcal{K}_{n}(\mathbf{L})\in \mathcal{V}\}\\ 
  &= V\{\mathbf{L}^-:\mathcal{K}_{n}(\mathbf L)\in \mathcal{V}\}\\
  &= \{\mathbf{L}^-:\mathcal{K}_{n}(\mathbf L)\in \mathcal{V}\},
\end{split}
\end{align}
where the last equality holds by Corollary~\ref{cor-HSPkites}.
Putting~\eqref{one}, \eqref{two} and~\eqref{three} together
we get the inclusion
$\psi(\mathcal{V})\subseteq \{\mathbf{L}^-:\mathcal{K}_{n}(\mathbf L)\in \mathcal{V}\}$
as required. 
\end{proof}

\begin{theorem}\label{PsiGamma}
For any $\mathcal{V}\in\Lambda^+(\var{\cigmv})$ and any
$n\in\mathbb{D}$, we have
$$
(\mathcal{V},n) = \Psi\Delta (\mathcal{V},n).
$$
\end{theorem}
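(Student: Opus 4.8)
The plan is to unwind the definition $\Psi\Delta(\mathcal{V},n)=\bigl(\psi(\delta(\mathcal{V})\cap\var{\ppmv}_n),\,\ord(\delta(\mathcal{V})\cap\var{\ppmv}_n)\bigr)$ and to verify its two coordinates separately: that the first equals $\mathcal{V}$ and the second equals $n$.

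For the first coordinate, the inclusion $\psi(\delta(\mathcal{V})\cap\var{\ppmv}_n)\subseteq\mathcal{V}$ is immediate, since $\delta(\mathcal{V})\cap\var{\ppmv}_n\subseteq\delta(\mathcal{V})$, the map $\psi$ is monotone by Lemma~\ref{little-facts}, and $\psi\delta(\mathcal{V})=\mathcal{V}$ by Lemma~\ref{psigamma}. For the converse inclusion I would take an arbitrary $\mathbf{A}\in\mathcal{V}$ and exhibit a perfect pseudo MV-algebra lying in $\delta(\mathcal{V})\cap\var{\ppmv}_n$ whose filter is $\mathbf{A}$; the natural candidate is $\mathbf{B}=\mathcal{K}(\ell(\mathbf{A}),1_{\ell(A)})$, the generalised kite over $\ell(\mathbf{A})$ with the identity automorphism. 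By Theorem~\ref{GenKite}, $\mathbf{B}$ is a perfect pseudo MV-algebra with $\mathbf{F}_\mathbf{B}=\ell(\mathbf{A})^-=\mathbf{A}\in\mathcal{V}$, so $\mathbf{B}\in\delta(\mathcal{V})$; and by Theorem~\ref{GenKite}(1), since the distinguished automorphism is the identity, $\mathbf{B}$ is symmetric, whence the term operation $(-^\approx)$ is the identity on $\mathbf{B}$ and so $\mathbf{B}\models\lambda^m(x)=x$ for every $m\in\mathbb{D}$; in particular $\mathbf{B}\in\var{\ppmv}_n$. Then $\mathbf{A}=\mathbf{F}_\mathbf{B}\in\psi(\delta(\mathcal{V})\cap\var{\ppmv}_n)$, and since $\mathbf{A}\in\mathcal{V}$ was arbitrary we conclude $\mathcal{V}\subseteq\psi(\delta(\mathcal{V})\cap\var{\ppmv}_n)$.

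For the second coordinate I would prove $\ord(\delta(\mathcal{V})\cap\var{\ppmv}_n)\mid n$ and $n\mid\ord(\delta(\mathcal{V})\cap\var{\ppmv}_n)$. For the first divisibility, every $\mathbf{A}\in\delta(\mathcal{V})\cap\var{\ppmv}_n$ lies in $\var{\ppmv}_n$ and hence satisfies $\lambda^n(x)=x$, so $\ord(\mathbf{A})\mid n$; thus $n$ belongs to the set whose $\min^{\mathbb{D}}$ defines $\ord(\delta(\mathcal{V})\cap\var{\ppmv}_n)$ in Definition~\ref{psMV-dimension}, and therefore $\ord(\delta(\mathcal{V})\cap\var{\ppmv}_n)\mid n$. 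For the second divisibility, since $\mathcal{V}$ is nontrivial I fix a nontrivial $\ell$-group $\mathbf{L}$ with $\mathbf{L}^-\in\mathcal{V}$. If $n\geq 1$, the kite $\mathcal{K}_n(\mathbf{L})=\mathcal{K}_{\mathbf{Z}_n}(\mathbf{L})$ has filter $(\mathbf{L}^-)^n$, which lies in $\mathcal{V}$ because $\mathcal{V}$ is a variety, so $\mathcal{K}_n(\mathbf{L})\in\delta(\mathcal{V})$; moreover $\mathbf{Z}_n$ has dimension $n$, and since $\mathbf{L}$ is nontrivial the coordinate-permuting automorphism $\lambda$ of $\mathbf{L}^{Z_n}$ also satisfies $\ord(\lambda)=n$ (its powers below $n$ move a tuple supported at a single coordinate by a non-identity element of $\mathbf{L}$), while $(-^\approx)$ on $\mathcal{K}_n(\mathbf{L})$ restricts to $\lambda$ on the filter; hence $\ord(\mathcal{K}_n(\mathbf{L}))=n$, which gives both $\mathcal{K}_n(\mathbf{L})\in\var{\ppmv}_n$ and $n\mid\ord(\delta(\mathcal{V})\cap\var{\ppmv}_n)$. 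If $n=0$, then $\var{\ppmv}_0=\var{\ppmv}$, so $\delta(\mathcal{V})\cap\var{\ppmv}_0=\delta(\mathcal{V})$ contains the kite $\mathcal{K}_j(\mathbf{L})$, of dimension $j$, for every $j\geq 1$; as no finite nonzero natural number is divisible by all $j\geq 1$, this forces $\ord(\delta(\mathcal{V}))=0$. Combining the two coordinates yields $(\mathcal{V},n)=\Psi\Delta(\mathcal{V},n)$.

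The step I expect to be the main obstacle---more delicate than deep---is the identification of the dimension of a kite with the order of its coordinate-permuting automorphism, used above for $\mathcal{K}_n(\mathbf{L})$. One has to check, reading off Remark~\ref{negations}, that $(-^\approx)$ restricted to the filter $(L^-)^B$ of $\mathcal{K}_\mathbf{B}(\mathbf{L})$ is exactly $\lambda$, and that $\lambda^m$ is the identity on the whole kite precisely when it is the identity on $\mathbf{L}^B$, i.e.\ precisely when the underlying permutation $\beta$ of $B$ satisfies $\beta^m=id_B$---here the nontriviality of $\mathbf{L}$ is what lets one separate coordinates. Once this is in place, $\ord(\mathcal{K}_n(\mathbf{L}))=n$ for nontrivial $\mathbf{L}$ is immediate, and the rest is bookkeeping with the Galois maps $\psi$, $\delta$ and the lemmas already established.
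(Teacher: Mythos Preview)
Your proposal is correct and follows essentially the same route as the paper's own proof: unwind $\Psi\Delta$, use $\psi\delta(\mathcal{V})=\mathcal{V}$ together with the symmetric kite $\mathcal{K}(\ell(\mathbf{A}),1_{\ell(A)})$ for the first coordinate, and for the second coordinate bound the dimension from above by membership in $\var{\ppmv}_n$ and from below by exhibiting the kite $\mathcal{K}_n(\mathbf{L})$ over a nontrivial $\mathbf{L}$ with $\mathbf{L}^-\in\mathcal{V}$. Your treatment is in fact slightly more careful than the paper's, since you spell out why $\ord(\mathcal{K}_n(\mathbf{L}))=n$ requires $\mathbf{L}$ nontrivial and you handle the case $n=0$ explicitly.
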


\begin{proof}
Immediately from definitions we get
\begin{align*}
\Psi\Delta (\mathcal{V},n) &=\Psi (\delta (\mathcal{V})\cap \var{\ppmv}_{n})\\
                        &=\bigl(\psi(\delta (\mathcal{V})\cap
                              \var{\ppmv}_{n}),\ 
                              \ord(\delta (\mathcal{V})\cap \var{\ppmv}_{n})\bigr).
\end{align*}
By Lemma~\ref{psigamma} we have $\psi\delta
(\mathcal{V})=\mathcal{V}$, so we obtain the inclusion
$$
\psi(\delta (\mathcal{V})\cap \var{\ppmv}_{n})\subseteq \mathcal{V}.
$$
Conversely,  if $\mathbf A\in\mathcal{V}$ then $\mathcal{K}(\ell
(\mathbf A),1_A)\in \var{\ppmv}_1\subseteq \var{\ppmv}_n$ holds.  By the proof
of Lemma \ref{psigamma} we get that
$\mathcal{K}(\ell(\mathbf A),1_A)\in \delta(\mathcal{V})$ and hence
$$
\mathbf A=\mathbf{F}_{\mathcal{K}(\ell (\mathbf A),1_A)}\in
\psi(\delta (\mathcal{V})\cap \var{\ppmv}_{n})
$$
showing the other inclusion, so we have 
$\mathcal{V}=\psi(\delta (\mathcal{V})\cap \var{\ppmv}_{n})$
and therefore
$$
\Psi\Delta (\mathcal{V},n) =
\bigl(\mathcal{V},\ \ord(\delta (\mathcal{V})\cap\var{\ppmv}_{n})\bigr). 
$$

It remains to show that $\ord(\delta (\mathcal{V})\cap\var{\ppmv}_{n}) = n$.
As $\ord(\var{\ppmv}_{n}))=n$ by definition, 
$\ord(\mathbf{A})$ divides $n$ for any $\mathbf{A}\in \var{\ppmv}_{n}$.
Therefore $\ord(\delta (\mathcal{V})\cap \var{\ppmv}_{n})$ divides
$\ord(\var{\ppmv}_{n})$. To show that $n$ is attained,
take any nontrivial $\mathbf{A}\in\mathcal{V}$ (here we use the
assumption that $\mathcal{V}$ is nontrivial) and consider
$\mathcal{K}_n(\ell(\mathbf A))$. By construction
$\ord(\mathcal{K}_n(\ell (\mathbf A))) = n$ and it is readily 
checked that $\mathcal{K}_n(\ell (\mathbf{A}))\in\delta (\mathcal{V})$.
\end{proof}

\begin{lemma}\label{GammaPsi}
For any $\avar{V}\in\Lambda(\var{\ppmv})$ we have
$$
\avar{V}\subseteq \Delta\Psi (\avar{V}).
$$
\end{lemma}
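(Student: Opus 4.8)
The plan is to unwind $\Delta\Psi(\avar V)$ into an intersection of two varieties and check that $\avar V$ sits inside each factor. By Definition~\ref{Galois-maps} we have $\Psi(\avar V) = (\psi(\avar V),\,\ord(\avar V))$, and hence $\Delta\Psi(\avar V) = \delta\psi(\avar V)\cap\var{\ppmv}_{\ord(\avar V)}$. So it suffices to establish the two inclusions $\avar V \subseteq \delta\psi(\avar V)$ and $\avar V \subseteq \var{\ppmv}_{\ord(\avar V)}$ separately, and intersect.

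The first of these is exactly Lemma~\ref{gammapsi}, so I would simply invoke it; there is no further work for that half.

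For the second inclusion I would argue elementwise. Write $n = \ord(\avar V)$. If $n = 0$ then $\var{\ppmv}_0 = \var{\ppmv}$ contains $\avar V$ trivially, so assume $n\geq 1$. Fix $\mathbf A\in\avar V$. By Definition~\ref{psMV-dimension}(2), and using that the $\mathbb D$-minimum defining $\ord(\avar V)$ is itself a member of the set over which it is taken, we get $\ord(\mathbf A)\mid n$. Since $\ord(\mathbf A) = \ord(-^\approx)$ is by definition the $\mathbb D$-least $k$ with $(-^\approx)^k = id_A$, and the set of such periods $k$ is closed under taking multiples, $\ord(\mathbf A)\mid n$ forces $(-^\approx)^n = id_A$. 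In other words $\mathbf A$ satisfies the defining identity $\lambda^n(x) = x$ of $\var{\ppmv}_n$, i.e.\ $\mathbf A\in\var{\ppmv}_n = \var{\ppmv}_{\ord(\avar V)}$. As $\mathbf A$ was arbitrary, $\avar V\subseteq\var{\ppmv}_{\ord(\avar V)}$, and combining with the first inclusion yields $\avar V\subseteq\delta\psi(\avar V)\cap\var{\ppmv}_{\ord(\avar V)} = \Delta\Psi(\avar V)$.

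There is no genuine obstacle here: the substantive content has already been discharged in Lemma~\ref{gammapsi}, and the rest is packaging. The one point that demands a little care, rather than a difficulty, is the dimension bookkeeping — one must use the specific feature of the divisibility lattice $\mathbb D$ that the set $\{k : \mu^k = id\}$ of periods of a bijection $\mu$ is closed under $\gcd$ and under multiples, so that its $\mathbb D$-minimum $\ord(\mu)$ divides every period and, conversely, every multiple of $\ord(\mu)$ is a period. It is exactly this that licenses the passage from $\ord(\mathbf A)\mid\ord(\avar V)$ to $\mathbf A\models\lambda^{\ord(\avar V)}(x) = x$.
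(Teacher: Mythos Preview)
Your proof is correct and follows exactly the paper's approach: the paper's proof is the one-liner $\avar{V}\subseteq \delta\psi(\avar{V})\cap \var{\ppmv}_{\ord(\avar{V})}=\Delta(\psi (\avar{V}),\ord(\avar{V}))=\Delta\Psi (\avar{V})$, which you have unpacked faithfully. Your extra care with the dimension bookkeeping (that the $\mathbb{D}$-minimum is attained and that periods are closed under multiples) makes explicit what the paper leaves implicit.
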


\begin{proof}
$\avar{V}\subseteq \delta\psi(\avar{V})\cap \var{\ppmv}_{\ord(\avar{V})}=\Delta
(\psi (\avar{V}),\ord(\avar{V}))=\Delta\Psi (\avar{V})$.
\end{proof}

\begin{theorem}\label{ThmKiteGen}
Let $\avar{V}\in\Lambda (\var{\ppmv})$. The following are equivalent.
\begin{enumerate}
\item $\avar{V}$ is generated by kites.
\item $\avar{V}=\Delta\Psi (\avar{V})$.
\item $\avar{V}=\Delta (\mathcal{W},n)$ for some $\mathcal{W}\in\Lambda
  (\var{\cigmv})$ and some $n\in\mathbb{D}$.
\end{enumerate} 
\end{theorem}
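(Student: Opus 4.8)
The plan is to prove the cycle $(1)\Rightarrow(2)\Rightarrow(3)\Rightarrow(1)$. The middle implication is trivial: with $\mathcal{W}=\psi(\avar{V})$ and $n=\ord(\avar{V})$ we have $\Delta(\mathcal{W},n)=\Delta\Psi(\avar{V})=\avar{V}$ by (2). The other two implications both hinge on a single new observation, which I would establish first. \emph{Key embedding}: if $\mathbf{L}$ is an $\ell$-group and $\lambda$ an automorphism of $\mathbf{L}$ with $\lambda^n=\mathrm{id}_L$ for some $n\in\mathbb{D}$, then $\mathcal{K}(\mathbf{L},\lambda)$ embeds into $\mathcal{K}_n(\mathbf{L})$. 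To prove it, write $Z_n=\{0,\dots,n-1\}$ with $\lambda_n(i)=i+1\pmod n$ for $n\geq1$, and $Z_0=\mathbb{Z}$ with $\lambda_0(i)=i+1$ (no wrap-around), so that $\mathcal{K}_n(\mathbf{L})=\mathcal{K}(\mathbf{L}^{Z_n},\sigma)$ for the shift $\sigma(y)(i)=y(\lambda_n(i))$; then consider the orbit map $\phi\colon\mathbf{L}\to\mathbf{L}^{Z_n}$ given by $\phi(x)(i)=\lambda^i(x)$. It is an injective homomorphism of $\ell$-groups, and $\phi\circ\lambda=\sigma\circ\phi$ holds, the only point requiring care being the index $i=n-1$ when $n\geq1$, where it reduces exactly to $\lambda^n=\mathrm{id}$. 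Thus $\phi$ is a morphism in $\mathsf{LGA}$, and by Lemma~\ref{KiteHom} it induces a homomorphism $\mathcal{K}(\mathbf{L},\lambda)\to\mathcal{K}_n(\mathbf{L})$ equal to $\phi$ restricted to $L^-\cup L^+$, hence injective.

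\emph{The main engine.} Fix $n\in\mathbb{D}$ and $\mathcal{W}\in\Lambda(\var{\cigmv})$, and put $\avar{U}=\delta(\mathcal{W})\cap\var{\ppmv}_n$. Since $\var{BA}\subseteq\avar{U}$, the variety $\avar{U}$ is nontrivial, hence perfectly generated by Corollary~\ref{down-closed}; by Lemma~\ref{si-perfect} every $\mathbf{A}\in\avar{U}$ is therefore a subdirect product of \emph{perfect} subdirectly irreducibles $\mathbf{A}_i\in\avar{U}$. For such $\mathbf{A}_i$, Lemma~\ref{CharIden} gives $\mathbf{F}_{\mathbf{A}_i}\in\mathcal{W}$; membership in $\var{\ppmv}_n$ forces $(\ell^\approx)^n=\mathrm{id}$ on $\ell(\mathbf{F}_{\mathbf{A}_i})$; and Theorem~\ref{KitesRep} gives $\mathbf{A}_i\cong\mathcal{K}(\ell(\mathbf{F}_{\mathbf{A}_i}),\ell^\approx)$, which by the Key embedding is a subalgebra of $\mathcal{K}_n(\ell(\mathbf{F}_{\mathbf{A}_i}))$. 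This last cyclic kite lies in $\avar{U}$: it is perfect by Theorem~\ref{GenKite}, its negative cone is $(\mathbf{F}_{\mathbf{A}_i})^{n}\in\mathcal{W}$, so it is in $\delta(\mathcal{W})$ by Lemma~\ref{CharIden}, and its dimension divides $n$, so it is in $\var{\ppmv}_n$. Consequently $\avar{U}=V\{\mathcal{K}_n(\mathbf{L}):\mathbf{L}\in\var{LG},\ \mathbf{L}^-\in\mathcal{W}\}$ — the inclusion $\supseteq$ being what we just checked and $\subseteq$ coming from the subdirect decomposition — so $\avar{U}$ is generated by kites. Since $\avar{V}=\Delta(\mathcal{W},n)=\avar{U}$ for the $\mathcal{W},n$ provided by (3), this is exactly $(3)\Rightarrow(1)$.

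Finally, for $(1)\Rightarrow(2)$: Lemma~\ref{GammaPsi} already gives $\avar{V}\subseteq\Delta\Psi(\avar{V})=\delta(\psi(\avar{V}))\cap\var{\ppmv}_{\ord(\avar{V})}$, so only the reverse inclusion is needed. Put $n=\ord(\avar{V})$ and run the engine with $\mathcal{W}=\psi(\avar{V})$: any $\mathbf{A}$ in $\Delta\Psi(\avar{V})$ is a subdirect product of perfect subdirectly irreducibles $\mathbf{A}_i$ embedding into $\mathcal{K}_n(\ell(\mathbf{F}_{\mathbf{A}_i}))$, where $\mathbf{F}_{\mathbf{A}_i}=\ell(\mathbf{F}_{\mathbf{A}_i})^-\in\psi(\avar{V})$. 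Since $\avar{V}$ is generated by kites, Lemma~\ref{kitegeneratedlemma} yields $\mathcal{K}_n(\ell(\mathbf{F}_{\mathbf{A}_i}))\in\avar{V}$, whence $\mathbf{A}_i\in\avar{V}$ and so $\mathbf{A}\in\avar{V}$. Thus $\Delta\Psi(\avar{V})\subseteq\avar{V}$, giving equality and closing the cycle. The main obstacle is the Key embedding: recognising that a finite-order ``twisted'' perfect pseudo MV-algebra $\mathcal{K}(\mathbf{L},\lambda)$ sits inside the plain cyclic kite $\mathcal{K}_n(\mathbf{L})$ via the orbit map $x\mapsto(\lambda^i(x))_i$. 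Once that is in hand, the remaining work is routine bookkeeping with subdirect decompositions and the functorial dictionary $\psi,\delta,\Psi,\Delta$ already developed.
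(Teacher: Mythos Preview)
Your proof is correct and follows essentially the same approach as the paper. Your Key embedding $\phi(x)(i)=\lambda^i(x)$ is exactly the map the paper constructs (written there on the filter level as $f\colon\mathbf{F}_\mathbf{A}\to(\mathbf{F}_\mathbf{A})^n$, $(f(x))(i)=x^{i\times\approx}$, extended via Lemma~\ref{FilHom}); you simply isolate it as a standalone statement at the $\ell$-group level and invoke Theorem~\ref{KitesRep} plus Lemma~\ref{KiteHom} instead, and your $(1)\Rightarrow(2)$ unwinds subdirect decompositions directly rather than first citing $(3)\Rightarrow(1)$ as the paper does --- but the substance is identical.
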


\begin{proof}
First we prove that (2) and (3) are equivalent.  The implication
from (2) to (3) is immediate, since by definition $\Psi(\avar{V})$ is of the form
$(\avar{W},n)$ for some $\avar{W}\in\Lambda(\var{\cigmv})$ and some $n\in\mathbb{N}$. 
The converse is almost immediate, but we need to consider two cases
according to whether $\mathcal{W}$ is trivial or not.
If $\mathcal{W}=\var{Tr}$, then by Lemma~\ref{little-facts}(\ref{little-facts-4})
we have $\Delta(\var{Tr},n) = \var{BA}$ for any $n\in\mathbb{N}$, so
$$
\Delta\Psi(\var{BA})=\Delta(\var{Tr},1)=\var {BA}.
$$
If $\mathcal{W}$ is nontrivial, then Theorem~\ref{PsiGamma} applies,
giving  
$$
\Delta\Psi (\avar{V})= \Delta\Psi\Delta (\mathcal{W},n)=\Delta
(\mathcal{W},n)=\avar{V}.
$$
Thus, (3) implies (2). 

For the implication from (3) to (1), we will prove that
$\Delta(\mathcal{W},n)$ is generated by kites for any
$\mathbf{W}\in\Lambda(\var{\ppmv})$ and any $n\in\mathbb{N}$.
If $\mathcal{W} = \var{Tr}$, then
$\Delta(\mathcal{W},n) = \var{BA}$, and $\var{BA}$ is generated by $\bm{2}$,
which is a kite. So, assume $\mathcal{W}$ is nontrivial and fix an arbitrary
$n\in\mathbb{D}$. Since $\Delta(\mathcal{V},n)$ is
perfectly generated, it suffices to show that each perfect
$\mathbf{A}\in \Delta(\mathcal{V},n)$ is generated by kites.
Define a map
$f\colon \mathbf F_\mathbf A\rightarrow \mathbf F_{\mathcal K_n(\ell
  (\mathbf F_\mathbf A))}\ (=(\mathbf F_\mathbf A)^n)$
by $(f(x))(i)=x^{i\times\approx}$. Since $-^\approx$ is an
automorphism and $\ord(-^\approx)\mid n$ holds, $f$ is 
a homomorphism. Moreover, $f$ satisfies
$$
(f(x^\approx))(i)=x^{(i+1)\times\approx}=(f(x))(i+1)=(f(x)^\approx)(i)
$$
so Lemma~\ref{FilHom} applies. Using it we obtain a homomorphism
$$
h\colon \mathbf A\rightarrow
\mathcal K_n(\ell(\mathbf{F}_\mathbf{A}))
$$
which is injective, since $f$ is. 
It follows that $\mathbf A\in IS(\mathcal{K}_n(\ell (\mathbf F_\mathbf A)))$,
and so $\Delta (\mathcal{W},n)$ is generated by kites, as claimed.

Finally we will show the implication from (1) to (2).
Let $\avar{V}\in\Lambda (\var{\ppmv})$ be generated by kites, and let
$\ord(\mathcal{V}) = n$. Unwinding the definitions, we have  
$$
\Delta\Psi (\avar{V})=\Delta(\psi(\avar{V}), n)
=\delta\psi(\avar{V})\cap\var{\ppmv}_{n}.
$$
Now, as we already remarked, $\Psi(\avar{V})$ is always
of the form $(\avar{W},n)$, so by the previous part of the proof we
have that $\Delta\Psi(\avar{V})$ is generated by kites. 
Then, Lemma~\ref{cycleISP} shows that $\Delta\Psi(\avar{V})$ is
generated by kites of the form $\mathcal K_{\ord(\avar{V})}(\mathbf L)$.

Take any generator of $\Delta\Psi(\avar{V})$, or equivalently,
take any $\ell$-group $\mathbf{L}$ such that
$\mathcal K_{n}(\mathbf L)\in  \Delta\Psi (\avar{V})$.
Then, $\mathcal K_{n}(\mathbf L)\in  \delta\psi(\avar{V})$ and
so $\mathbf{F}_{\mathcal{K}_{n}(\mathbf{L})}\in \psi (\avar{V})$ by
Lemma~\ref{CharIden}. Since
$\mathbf{F}_{\mathcal{K}_{n}(\mathbf{L})} = (\mathbf{L}^-)^{n}$ and
$\psi(\avar{V})$ is a variety, we obtain 
$\mathbf L^-\in \psi (\avar{V})$, which yields
$\mathcal{K}_{n}(\mathbf{L})\in\avar{V}$, by
Lemma~\ref{kitegeneratedlemma}. This shows that all generators
of $\Delta\Psi(\avar{V})$ belong to $\avar{V}$, so
$\Delta\Psi (\avar{V})\subseteq \avar{V}$. The converse always holds
(cf. Lemma~\ref{GammaPsi}), so we obtain the desired equality
$\Delta\Psi (\avar{V}) = \avar{V}$.
\end{proof}

\begin{theorem}\label{ThmCompLatt}
Varieties generated by kites form a complete sublattice of
$\Lambda(\var{\ppmv})$ with $\var{\ppmv}$ being its largest, and $\var{BA}$
its smallest element.
\end{theorem}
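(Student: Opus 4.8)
The plan is to recognise the kite-generated subvarieties of $\var{\ppmv}$ as exactly the fixed points of the operator $\Delta\Psi$ on $\Lambda(\var{\ppmv})$ — this is the content of the equivalence of (1) and (2) in Theorem~\ref{ThmKiteGen} — and then to run the standard fixed-point argument for closure operators. So I would first check that $\Delta\Psi$ is a closure operator on $\Lambda(\var{\ppmv})$: it is monotone by Lemma~\ref{little-facts}(\ref{little-facts-1}), extensive by Lemma~\ref{GammaPsi}, and idempotent because $\Delta\Psi(\avar V)$ is always of the form $\Delta(\avar W,n)$, hence kite-generated by Theorem~\ref{ThmKiteGen}, hence already a fixed point. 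Write $\avar S$ for the class of kite-generated subvarieties of $\var{\ppmv}$.

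Next I would pin down the two extreme elements. Using Lemma~\ref{little-facts}(\ref{little-facts-3},\ref{little-facts-5}) one gets $\Psi(\var{\ppmv})=(\var{\cigmv},0)$ and $\Delta(\var{\cigmv},0)=\var{\ppmv}_0=\var{\ppmv}$, so $\var{\ppmv}\in\avar S$ and it is plainly the largest member. Using Lemma~\ref{little-facts}(\ref{little-facts-2},\ref{little-facts-4}) one gets $\Psi(\var{BA})=(\var{Tr},1)$ and $\Delta(\var{Tr},1)=\var{BA}$, so $\var{BA}\in\avar S$; and for any $\avar V\in\avar S$, monotonicity together with $\var{Tr}\subseteq\avar V$ gives $\var{BA}=\Delta\Psi(\var{Tr})\subseteq\Delta\Psi(\avar V)=\avar V$, so $\var{BA}$ is the smallest member of $\avar S$.

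For closure under meets I would argue as for an arbitrary closure operator: for $\{\avar V_i\}_{i\in I}\subseteq\avar S$ the meet in $\Lambda(\var{\ppmv})$ is the intersection $\bigcap_{i\in I}\avar V_i$, and monotonicity gives $\Delta\Psi\!\left(\bigcap_i\avar V_i\right)\subseteq\Delta\Psi(\avar V_j)=\avar V_j$ for every $j$, hence $\Delta\Psi\!\left(\bigcap_i\avar V_i\right)\subseteq\bigcap_i\avar V_i$; combined with extensivity this shows $\bigcap_i\avar V_i\in\avar S$ (the empty case just records $\var{\ppmv}\in\avar S$). For closure under nonempty joins I would step outside the closure-operator language and argue directly: writing each $\avar V_i=V(\avar K_i)$ with $\avar K_i$ a class of kites, the join $\bigvee_i\avar V_i=V\!\left(\bigcup_i\avar V_i\right)=V\!\left(\bigcup_i\avar K_i\right)$ is again generated by kites, so lies in $\avar S$; the empty join inside $\avar S$ is its least element $\var{BA}$, already placed in $\avar S$. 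Collecting these, $\avar S$ is closed under arbitrary meets and arbitrary nonempty joins formed in $\Lambda(\var{\ppmv})$, with $\var{BA}$ and $\var{\ppmv}$ as bottom and top, which is exactly the assertion.

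Once Theorem~\ref{ThmKiteGen} is available the argument is essentially formal, so the only point requiring care is the bookkeeping around the empty join: the trivial variety is not kite-generated (indeed $\Delta\Psi(\var{Tr})=\var{BA}\supsetneq\var{Tr}$), so the bottom of $\avar S$ is $\var{BA}$ and not the bottom of $\Lambda(\var{\ppmv})$ — precisely as the statement says, and in line with how the paper already treats $\Lambda^+(\var{\ppmv})$ as a complete sublattice of $\Lambda(\var{\ppmv})$. I expect this small subtlety to be the only real obstacle; the rest is routine closure-operator manipulation.
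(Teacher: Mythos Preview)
Your proposal is correct and follows essentially the same approach as the paper: both arguments use $\Delta\Psi$ as a closure operator (via Theorem~\ref{ThmKiteGen} and Lemma~\ref{GammaPsi}) to obtain closure under meets, and both handle the $\var{BA}$-versus-$\var{Tr}$ subtlety in the same way. The only cosmetic difference is in the treatment of joins: the paper packages your direct argument as the statement that $\Sigma(\mathcal{V}) \dfeq V\{\mathbf{A}\in\mathcal{V}: \mathbf{A}\text{ is a kite}\}$ is an interior operator on $\Lambda^+(\var{\ppmv})$ whose fixed points are the kite-generated varieties, whereas you unwind this to the bare observation that $\bigvee_i V(\avar K_i) = V\bigl(\bigcup_i \avar K_i\bigr)$ is again generated by kites --- these are the same fact.
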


\begin{proof}
Let $\Sigma(\var{\ppmv})$ be the subposet of $\Lambda(\var{\ppmv})$ formed by
the varieties generated by kites. Note that $\Sigma$ is an interior operator
on $\Lambda^+(\var{\ppmv})$, given by
$\Sigma(\mathcal{V}) = V\{\mathbf{A}\in\mathcal{V}: \mathbf{A} \text{ is a kite}\}$ 
for any $\mathcal{V}\in\Lambda^+(\var{\ppmv})$. Trivially, its fixed points
are precisely the members of $\Sigma(\var{\ppmv})$, so $\Sigma(\var{\ppmv})$ is
is a complete join subsemilattice of $\Lambda(\var{\ppmv})$, whose bottom
element is $\var{BA}$, and top element is $\var{\ppmv}$. 

On the other hand, by Theorem~\ref{ThmKiteGen} and lemmas leading up to it,
$\Delta\Psi$ is a closure operator on $\Lambda(\var{\ppmv})$.
Its closed elements are precisely the varieties generated by
kites, that is, members of $\Sigma(\var{\ppmv})$. Therefore, 
$\Sigma(\var{\ppmv})$ is a complete meet subsemilattice
of $\Lambda(\var{\ppmv})$, indeed of $\Lambda^+(\var{\ppmv})$.
Hence, $\Sigma(\var{\ppmv})$ is a complete join
subsemilattice of $\Lambda(\var{\ppmv})$, and so a complete sublattice of
$\Lambda(\var{\ppmv})$.
\end{proof}

It follows from the proof of Theorem~\ref{ThmCompLatt} that each perfectly
generated variety $\mathcal{V}$ of pseudo MV-algebras determines an interval
$[\Sigma(\avar{V}), \Delta\Psi(\avar{V})]$ in $\Lambda^+(\var{\ppmv})$.
If $\avar{V}$ is generated by kites, then 
$\Sigma(\avar{V}) = \avar{V} = \Delta\Psi(\avar{V})$, but
in general the interval is nontrivial, as we will now show.

\begin{lemma}\label{not-kite}
Let $\avar{V}$ be the variety generated by the algebra
$\mathcal{K}(\mathbb{Q}, \lambda)$ of Example~\ref{not-DvuKow}.
Then $\avar{V}$ is not generated by kites. Moreover,
$$
\bigl[\Sigma(\avar{V}), \Delta\Psi(\avar{V})\bigr] =
\bigl[\var{BA}, V\{\mathbf{A}\in\var{\pfpmv}: \mathbf{F}_\mathbf{A}
\text{ is commutative}\}\bigr].
$$ 
\end{lemma}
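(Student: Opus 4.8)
The plan is to establish the two parts of the statement separately. For the first part, I would argue by contradiction: suppose $\avar{V} = V(\mathcal{K}(\mathbb{Q},\lambda))$ is generated by kites. Since $\lambda(x) = 2x \neq x$ on $\mathbb{Q}$, but $\lambda^2(x) = 4x \neq x$ as well, and in general $\lambda^n(x) = 2^n x \neq x$ for all $n \geq 1$, we have $\ord(\mathcal{K}(\mathbb{Q},\lambda)) = 0$, hence $\ord(\avar{V}) = 0$ (note $\avar{V}$ is nontrivial, as $\mathcal{K}(\mathbb{Q},\lambda)$ is a nontrivial perfect pseudo MV-algebra). By Theorem~\ref{ThmKiteGen}, if $\avar{V}$ is generated by kites then $\avar{V} = \Delta\Psi(\avar{V}) = \delta(\psi(\avar{V})) \cap \var{\ppmv}_0 = \delta(\psi(\avar{V}))$. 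Now $\mathbf{F}_{\mathcal{K}(\mathbb{Q},\lambda)} = \mathbb{Q}^-$ (the negative cone of $\mathbb{Q}$), which generates the variety of negative cones of Abelian $\ell$-groups, so $\psi(\avar{V})$ is contained in (indeed equals, by Lemma~\ref{KitesLemma} applied to $\{\mathcal{K}(\mathbb{Q},\lambda)\}$, together with $\psi\delta = \mathrm{id}$) the variety of cancellative commutative IGMV-algebras. But then $\avar{V} = \delta(\psi(\avar{V}))$ contains $\mathcal{K}(\ell(\mathbb{Q}^-), 1_{\mathbb{Q}}) = \mathcal{K}(\mathbb{Q}, id)$, which has dimension $1$. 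The obstruction is this: I must show $\mathcal{K}(\mathbb{Q}, \lambda) \notin V\{\mathcal{K}(\mathbb{L}, id) : \mathbf{L}^- \in \psi(\avar{V})\}$; equivalently, that the twist cannot be recovered. The cleanest route is to observe that $\mathcal{K}(\mathbb{Q},\lambda)$ is subdirectly irreducible (it is, since $\mathbb{Q}$ is, and by Lemma~\ref{KiteHom} congruences of the kite correspond to $\lambda$-invariant congruences of $\mathbb{Q}$, of which there are only the two trivial ones), so by Jónsson's Lemma it would lie in $HSP_U$ of the generating kites; chasing dimensions through $H$, $S$, $P_U$ (all of which, by Lemma~\ref{lemmaHSPkites} and the fact that $-^\approx$ is definable, cannot turn a dimension-$1$ algebra into a dimension-$0$ one in a way that survives in a subdirectly irreducible image) yields the contradiction. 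I expect pinning down this dimension-preservation argument under $HSP_U$ to be the main obstacle, since $\ord$ is not obviously an equational invariant in infinite dimension; the key is that $\mathcal{K}(\mathbb{L},id) \models \lambda(x) = x$, i.e. $x^{\sn\sn} = x$, this identity is preserved by $HSP$, but $\mathcal{K}(\mathbb{Q},\lambda)$ fails it.

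For the identification of the interval, first $\Sigma(\avar{V})$: the only kites in $\avar{V}$ are, I claim, just the Boolean ones. Indeed any kite $\mathcal{K}_{\mathbf{B}}(\mathbf{L}) \in \avar{V}$ has $\mathbf{F}_{\mathcal{K}_{\mathbf{B}}(\mathbf{L})} = (\mathbf{L}^-)^B \in \psi(\avar{V})$, which as noted forces $\mathbf{L}^-$ commutative; and being in a dimension-$0$ variety is no constraint, but the kite also sits inside $V(\mathcal{K}(\mathbb{Q},\lambda))$, so — here I would again use subdirect irreducibility and Jónsson — its subdirectly irreducible factors are homomorphic images of subalgebras of ultrapowers of $\mathcal{K}(\mathbb{Q},\lambda)$, all of which, having the form $\mathcal{K}(\mathbf{M}, \mu)$ with $\mu$ conjugate to multiplication by a power of $2$ on a subgroup/quotient of an ultrapower of $\mathbb{Q}$, force the B-cycle underlying any kite factor to be a disjoint union of fixed points, i.e. $\ord = 1$; but then the kite is a kite of dimension $1$ with commutative $\ell$-group, which by Theorem~\ref{GenKite}(2) is a perfect MV-algebra, and its finite subdirectly irreducibles are all equal to $\bm{2}$ unless $\mathbf{L}$ is trivial. (Some care is needed: I would phrase this as ``$\Sigma(\avar{V})$ contains no kite over a nontrivial $\ell$-group'', which suffices since $\var{BA} \subseteq \Sigma(\avar{V})$ always.) Hence $\Sigma(\avar{V}) = \var{BA}$.

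For the other endpoint, $\Delta\Psi(\avar{V}) = \Delta(\psi(\avar{V}), 0) = \delta(\psi(\avar{V})) \cap \var{\ppmv}_0 = \delta(\psi(\avar{V}))$. I have already identified $\psi(\avar{V})$ as the variety of negative cones of Abelian $\ell$-groups (equivalently, commutative cancellative IGMV-algebras): it contains $\mathbb{Q}^- = \mathbf{F}_{\mathcal{K}(\mathbb{Q},\lambda)}$ and is contained in the commutative members of $\var{\cigmv}$ by Lemma~\ref{KitesLemma}, and $\mathbb{Q}^-$ generates all of commutative $\var{\cigmv}$ since $\mathbb{Q}$ generates the variety of Abelian $\ell$-groups. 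Therefore, unwinding the definition of $\delta$, $$\Delta\Psi(\avar{V}) = \delta(\psi(\avar{V})) = V\{\mathbf{A} \in \var{\pfpmv} : \mathbf{F}_\mathbf{A} \in \psi(\avar{V})\} = V\{\mathbf{A} \in \var{\pfpmv} : \mathbf{F}_\mathbf{A} \text{ is commutative}\},$$ where the last equality uses that $\mathbf{F}_\mathbf{A}$ commutative iff it lies in commutative $\var{\cigmv}$ iff it lies in $\psi(\avar{V})$. This gives exactly the claimed interval $[\var{BA},\ V\{\mathbf{A}\in\var{\pfpmv}: \mathbf{F}_\mathbf{A}\text{ is commutative}\}]$. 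The one delicate point throughout is correctly justifying ``$\psi(\avar{V})$ = commutative $\var{\cigmv}$'', but this follows cleanly from $\psi\delta = \mathrm{id}$ (Lemma~\ref{psigamma}), Lemma~\ref{KitesLemma} with $\mathcal{M} = \{\mathcal{K}(\mathbb{Q},\lambda)\}$, and the classical fact that $\mathbb{Q}$ generates $\var{LG}_{ab}$.
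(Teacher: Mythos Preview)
Your computation of $\Delta\Psi(\avar{V})$ is correct and matches the paper's: $\ord(\avar{V})=0$ and $\psi(\avar{V})$ is the commutative subvariety of $\var{\cigmv}$, so $\Delta\Psi(\avar{V})=\delta(\psi(\avar{V}))=V\{\mathbf{A}\in\var{\pfpmv}:\mathbf{F}_\mathbf{A}\text{ commutative}\}$.

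The argument for $\Sigma(\avar{V})=\var{BA}$, however, has a genuine gap. You reduce (with some hand-waving) to the claim that every subdirectly irreducible kite in $\avar{V}$ is a dimension-$1$ kite over an Abelian $\ell$-group, i.e.\ a perfect MV-algebra. But that is not enough: the Chang algebra $\mathcal{K}_1(\mathbb{Z})$ is such an object, and your sentence ``its finite subdirectly irreducibles are all equal to $\bm 2$ unless $\mathbf{L}$ is trivial'' does not exclude it --- there is no reason to restrict to finite algebras. You need to show that no nontrivial dimension-$1$ kite lies in $\avar{V}$ at all. The paper does this with an idea you are missing entirely: because $\lambda(x)=2x=x+x$ in $\mathbb{Q}$, the algebra $\mathcal{K}(\mathbb{Q},\lambda)$ satisfies the identity
\[
(x\vee x\fn)\odot(x\vee x\fn)=(x\vee x\fn)\sn\sn .
\]
Hence so does all of $\avar{V}$. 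Combining this with the fact (which you essentially have, and which the paper proves via linear ordering) that every kite in $\avar{V}$ satisfies $x\sn\sn=x$, one gets $(x\vee x\fn)^2=x\vee x\fn$ on every kite in $\avar{V}$; so its filter is idempotent, forcing $\mathbf{L}$ trivial and the kite equal to~$\bm 2$.

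Your first paragraph also does not reach a contradiction as written. Assuming $\avar{V}$ is kite-generated gives $\avar{V}=\Delta\Psi(\avar{V})=\delta\psi(\avar{V})$, and showing $\mathcal{K}(\mathbb{Q},\lambda)\notin V\{\mathcal{K}(\mathbf{L},id)\}$ is the wrong direction: $\delta\psi(\avar{V})$ is not generated by dimension-$1$ kites, so this does not contradict $\mathcal{K}(\mathbb{Q},\lambda)\in\delta\psi(\avar{V})$. The clean route (which the paper takes) is to prove directly that $\Sigma(\avar{V})\models x\sn\sn=x$ by showing any SI kite in $\avar{V}$ is linearly ordered, hence has a one-element index set; then $\mathcal{K}(\mathbb{Q},\lambda)\notin\Sigma(\avar{V})$ follows immediately. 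Once $\Sigma(\avar{V})=\var{BA}$ is established via the idempotence identity, the first claim is of course a corollary.
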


\begin{proof}
By Theorem~5.13 of Dvure\v{c}enskij, Kowalski~\cite{DK14},
every kite is subdirectly embeddable in a
product of subdirectly irreducible kites, so $\Sigma(\avar{V})$
is generated by subdirectly irreducible kites from $\avar{V}$.
Let $\mathbf{K}\in\avar{V}$ be a subdirectly irreducible kite. 
By J\'onsson's Lemma $\mathbf{K}\in HSP_U(\mathcal{K}(\mathbb{Q},\lambda))$
and therefore $\mathbf{K}$ is linearly ordered, because
$\mathcal{K}(\mathbb{Q},\lambda)$ is.  Therefore,
$\mathbf{K} = \mathcal{K}(\mathbf{L}^1, \tau)$, where $\mathbf{L}$
is an Abelian linearly ordered $\ell$-group, and
$\tau\colon 1\rightarrow 1$ is the constant map.
Hence $\Sigma(\avar{V})\models x\sn\sn = x$, and so
$\mathcal{K}(\mathbb{Q},\lambda)\notin \Sigma(\avar{V})$. 
This proves the first part.

For the second part, a simple calculation shows that $\ord(\lambda) =
0$, and so we have $\ord(\avar{V}) = \ord(\mathcal{K}(\mathbb{Q}, \lambda)) = 0$.
Since $\mathbb{Q}$ generates the variety of Abelian $\ell$-groups, we get
that $\mathbb{Q}^-$ generates the commutative subvariety of $\var{\cigmv}$.
Therefore 
$$
\Delta(V(\mathcal{K}(\mathbb{Q}, \lambda)), 0) =
V\{\mathbf{A}\in\var{\pfpmv}: \mathbf{F}_\mathbf{A}\text{ is commutative}\}.
$$
On the other hand, $\mathcal{K}(\mathbb{Q}, \lambda)$ satisfies the identity
$$
(x\vee x^-)\odot(x\vee x^-) = (x\vee x^-)\sn\sn.
$$
To see it, note that $a\vee a^-\in F_{\mathcal{K}(\mathbb{Q}, \lambda)}$ for any
$a\in \mathcal{K}(\mathbb{Q}, \lambda)$, and so taking $b = a\vee a^-$ we get
$b\odot b = 2b = b\sn\sn$. By the proof of the first part of the theorem,
we get that every kite in $\avar{V}$ satisfies $x\sn\sn = x$.
Therefore
$$
\Sigma(\avar{V})\models (x\vee x^-)\odot(x\vee x^-) = (x\vee x^-)\sn\sn = x\vee x^-
$$
and it follows by a direct calculation that every kite in
$\Sigma(\avar{V})$ must be idempotent. The only idempotent kite is the
two-element Boolean algebra.   
\end{proof}

We end by a more detailed description of the lattice of
varieties generated by kites. Incidentally, it answers Questions~8.1 and~8.2
from Dvure\v{c}enskij, Kowalski~\cite{DK14} insofar as they apply in this context.

\begin{theorem}\label{Varieties}
Let $\mathbb{K}$ be the lattice of subvarieties of $\var{\ppmv}$ generated by kites.  
$$
\mathbb{K}\cong
\mathbf{1}\oplus\bigl(\Lambda^+(\var {\cigmv})\times\mathbb{D}\bigr)
\cong
\mathbf{1}\oplus\bigl(\Lambda^+(\var{LG})\times\mathbb{D}\bigr)
$$
where $\mathbf{1}$ is the trivial lattice and $\oplus$ is the operation of
ordinal sum. 
\end{theorem}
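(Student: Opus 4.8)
The plan is to read off the isomorphism directly from the Galois maps $\Psi$ and $\Delta$ of Definition~\ref{Galois-maps}, letting Theorems~\ref{PsiGamma} and~\ref{ThmKiteGen} do the real work. By Theorem~\ref{ThmKiteGen}, a variety $\avar{V}$ is generated by kites exactly when $\avar{V}=\Delta(\mathcal{W},n)$ for some $\mathcal{W}\in\Lambda(\var{\cigmv})$ and $n\in\mathbb{D}$, so $\mathbb{K}=\{\Delta(\mathcal{W},n):\mathcal{W}\in\Lambda(\var{\cigmv}),\,n\in\mathbb{D}\}$. I would split this set according to whether $\mathcal{W}$ is trivial: by Lemma~\ref{little-facts}(\ref{little-facts-4}) every pair $(\var{Tr},n)$ gives $\Delta(\var{Tr},n)=\var{BA}$, so the trivial choices contribute only $\var{BA}$, which is the least element of $\Lambda^+(\var{\ppmv})$ (the unique atom of $\Lambda(\var{\ppmv})$) and hence the least element of $\mathbb{K}$. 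Writing $\mathbb{K}'\dfeq\mathbb{K}\setminus\{\var{BA}\}$, it then suffices to show $\mathbb{K}'$ is order-isomorphic to $\Lambda^+(\var{\cigmv})\times\mathbb{D}$, since that yields $\mathbb{K}\cong\mathbf 1\oplus(\Lambda^+(\var{\cigmv})\times\mathbb{D})$ as posets.

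For the order isomorphism I would verify that $\Psi$ and $\Delta$ restrict to mutually inverse bijections between $\mathbb{K}'$ and $\Lambda^+(\var{\cigmv})\times\mathbb{D}$. Take $\avar{V}\in\mathbb{K}'$; by Theorem~\ref{ThmKiteGen} write $\avar{V}=\Delta(\mathcal{W},n)$, where $\mathcal{W}$ must be nontrivial, for otherwise $\avar{V}=\var{BA}$ by Lemma~\ref{little-facts}(\ref{little-facts-4}). Then Theorem~\ref{PsiGamma} gives $\Psi(\avar{V})=\Psi\Delta(\mathcal{W},n)=(\mathcal{W},n)\in\Lambda^+(\var{\cigmv})\times\mathbb{D}$, while the equivalence of clauses (1) and (2) in Theorem~\ref{ThmKiteGen} gives $\Delta\Psi(\avar{V})=\avar{V}$. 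Conversely, for $(\mathcal{W},n)\in\Lambda^+(\var{\cigmv})\times\mathbb{D}$ the variety $\Delta(\mathcal{W},n)$ is generated by kites, hence lies in $\mathbb{K}$, and it cannot be $\var{BA}$: applying $\Psi$ and using Theorem~\ref{PsiGamma} together with Lemma~\ref{little-facts}(\ref{little-facts-2}) would otherwise force $(\mathcal{W},n)=\Psi(\var{BA})=(\var{Tr},1)$, contradicting nontriviality of $\mathcal{W}$. Thus $\Delta(\mathcal{W},n)\in\mathbb{K}'$ and $\Psi\Delta(\mathcal{W},n)=(\mathcal{W},n)$ by Theorem~\ref{PsiGamma}. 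So $\Psi|_{\mathbb{K}'}$ and $\Delta|_{\Lambda^+(\var{\cigmv})\times\mathbb{D}}$ are mutually inverse; since both maps are monotone by Lemma~\ref{little-facts}(\ref{little-facts-1}), and a monotone bijection with monotone inverse is an order isomorphism, this gives $\mathbb{K}'\cong\Lambda^+(\var{\cigmv})\times\mathbb{D}$.

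Finally I would assemble the pieces. Since $\var{BA}$ is strictly below every member of $\mathbb{K}'$ (each such variety is nontrivial and contains the atom $\var{BA}$), the poset $\mathbb{K}$ is the ordinal sum of the one-element poset $\{\var{BA}\}$ with $\mathbb{K}'$, whence $\mathbb{K}\cong\mathbf 1\oplus(\Lambda^+(\var{\cigmv})\times\mathbb{D})$ as posets; both sides are lattices ($\mathbb{K}$ by Theorem~\ref{ThmCompLatt}), and an order isomorphism of lattices is a lattice isomorphism. For the second isomorphism, the categorical equivalence between $\var{\cigmv}$ and $\var{LG}$ recalled in Section~\ref{sec:prel} gives $\Lambda(\var{\cigmv})\cong\Lambda(\var{LG})$; this isomorphism preserves the least (trivial) variety, hence restricts to $\Lambda^+(\var{\cigmv})\cong\Lambda^+(\var{LG})$, and taking the product with $\mathbb{D}$ and prepending $\mathbf 1$ yields $\mathbf 1\oplus(\Lambda^+(\var{\cigmv})\times\mathbb{D})\cong\mathbf 1\oplus(\Lambda^+(\var{LG})\times\mathbb{D})$. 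The one genuinely delicate point is the bookkeeping around the degenerate fibre over $\var{Tr}$: one must check that $\var{BA}$ is the \emph{only} variety arising from a trivial $\mathcal{W}$ and that it never arises from a nontrivial pair. This is precisely what Lemma~\ref{little-facts}(\ref{little-facts-4}), Lemma~\ref{little-facts}(\ref{little-facts-2}) and Theorem~\ref{PsiGamma} deliver, so once those earlier results are in hand the remainder is routine.
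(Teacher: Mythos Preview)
Your proof is correct and follows essentially the same route as the paper: use Theorem~\ref{ThmKiteGen} to identify $\mathbb{K}$ with the image of $\Delta$, peel off $\var{BA}$ via Lemma~\ref{little-facts}, and then use Theorem~\ref{PsiGamma} to see that $\Psi$ and $\Delta$ restrict to mutually inverse monotone maps between $\mathbb{K}\setminus\{\var{BA}\}$ and $\Lambda^+(\var{\cigmv})\times\mathbb{D}$. The only cosmetic difference is that the paper proves order-reflection of $\Delta$ directly rather than invoking monotonicity of the inverse $\Psi$, but the ingredients and structure are the same.
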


\begin{proof}
Consider the map $\Delta\colon \Lambda(\var
{\cigmv})\times\mathbb{D} \rightarrow \mathbb{K}$
of Definition~\ref{Galois-maps}. This map is surjective, by
Theorem~\ref{ThmKiteGen}(2). If $\avar{V}_1$ and $\avar{V}_2$
are distinct nontrivial varieties in $\var{\cigmv}$, then
$\delta(\avar{V}_1)\neq \delta(\avar{V}_2)$ by Lemma~\ref{CharIden}.
Together with the fact that $\var{\ppmv}_n\neq \var{\ppmv}_m$
for $n\neq m$, it implies that $\Delta$ is injective on 
$\Lambda^+(\var{\cigmv})\times\mathbb{D}$.

Let $\Delta'\colon \mathbf{1}\oplus\bigl(\Lambda^+(\var
{\cigmv})\times\mathbb{D}\bigr) \rightarrow \mathbb{K}$
be defined by putting $\Delta'(\mathbf{1}) = \var{BA}$ and
$\Delta'(\avar{V},n) = \Delta(\avar{V},n)$ for
$\avar{V}\in\Lambda^+(\var {\cigmv})\times\mathbb{D}$.
By Lemma~\ref{little-facts} and the previous paragraph, $\Delta'$ is
bijective and monotone. Note that $\mathbf{1}$ and $\var{BA}$ are the respective
bottom elements of $\mathbf{1}\oplus\bigl(\Lambda^+(\var
{\cigmv})\times\mathbb{D}\bigr)$ and $\mathbb{K}$), so
$\Delta'$ also preserves and reflect the bottom element.

Now, take $\avar{W}_1,\avar{W}_2\in\mathbb{K}$ such that
$\var{BA}\subset \avar{W}_1\subseteq\avar{W}_2$.
By Theorem~\ref{ThmKiteGen}(3) we have $\avar{W}_1 = \Delta(\avar{V}_1, n_1)$ and
$\avar{W}_2 = \Delta(\avar{V}_2, n_2)$, for some
$\avar{V}_1, \avar{V}_2\in\Lambda^+(\var{\cigmv})$,
and $n_1,n_2\in\mathbb{D}$. Hence,
$\delta(\avar{V}_1) = \avar{W}_1\subseteq\avar{W}_2 = \delta(\avar{V}_2)$,
and $n_1 = \ord(\avar{W}_1) \mid \ord(\avar{W}_2) = n_2$.
It follows that $\{\mathbf A\in
\var {\pfpmv}: \mathbf{F}_{\mathbf{A}}\in\mathcal{V}_1\}\subseteq
\{\mathbf A\in
\var {\pfpmv}: \mathbf{F}_{\mathbf{A}}\in\mathcal{V}_2\}$, which further implies
$\avar{V}_1\subseteq \avar{V}_2$. So,
$(\avar{V}_1,n_1)\leq(\avar{V}_2,n_2)$ and therefore $\Delta'$ reflects the
order.

A bijective map  preserving and reflecting order between lattices is a lattice
isomorphism, so $\Delta'$ is the required map between $\mathbb{K}$ and 
$\mathbf{1}\oplus\bigl(\Lambda^+(\var {\cigmv})\times\mathbb{D}\bigr)$.

The second isomorphism follows by the well known fact
that $\Lambda(\var{\cigmv})\cong\Lambda(\var{LG})$ (see the remarks preceding Proposition~\ref{aut-commut}).
\end{proof}  

\section{Perspectives}

We envisage several directions for further work. Below we briefly describe two.

\subsection{Perfect FL-algebras}
An antivariety is a class of similar algebras (or, more generally, 
relational structures) defined by a set of
\emph{anti-identities}, that is, universally quantified disjunctions of negated
identities (atomic formulas). Equivalently, an antivariety is a class
closed under inverses of homomorphic images.
Since its introduction in~\cite{GK00},
the theory of antivarieties has been growing steadily,
especially in areas connected to constraint satisfaction.\footnote{Recall that
$H^{-1}(\mathbf{A})$ for a single structure $\mathbf{A}$ is
precisely the constraint satisfaction problem with template $\mathbf{A}$.}

Definition~\ref{perfect} naturally extends to all FL-algebras: we
say an algebra $\mathbf{A}\in \var{FL}$ is perfect if
there exists a homomorphism $h_\mathbf A\colon \mathbf A\rightarrow\bm 2$ such
that
\begin{itemize}
\item[($\ast$)] for any $x\in h_\mathbf A^{-1}(0)$ and any $y\in h_\mathbf A^{-1}(1)$ the
inequality $x\leq y$ holds.
\end{itemize}
Without ($\ast$), we have precisely the antivariety
$H^{-1}(\mathbf{2})$ of all FL-algebras that have $\mathbf{2}$ as a homomorphic
image. But ($\ast$) is very natural model theoretically, namely, it requires
that the homomorphism  $h_\mathbf A\colon \mathbf A\rightarrow\bm 2$ be
\emph{strong} with respect to the lattice ordering, that is, not only order
preserving, but also order reflecting. The latter makes the class of perfect
FL-algebras not closed under direct products. By Lemma~\ref{HSPu-closure}
and the remarks following it, the class is closed under nontrivial homomorphic
images, and perfect products. Indeed it is not difficult to show that
perfect FL-algebras are the largest subclass of $H^{-1}(\mathbf{2})$
with these properties. 

We intend to investigate perfect FL-algebras further: algebraically and
model-theoretically.

\subsection{Generalised kites in general}

Kites were conceived in the context of pseudo BL-algebras, and it would be
natural to try to lift our results to that setting, and possibly beyond.
Promisingly, Definition~\ref{gen-kite} extends:
instead of an $\ell$-group $\mathbf{L}$ and an
automorphism $\lambda$ one needs to take two $\ell$-groups
$\mathbf{L}$ and $\mathbf{M}$ and two pairs of adjoint maps between
them. To be more precise, one takes the lattice-ordered monoid reducts of
$\mathbf{L}^-$ and $\mathbf{M}^-$, and pairs $(\lambda_*,\lambda^*)$ and
$(\rho_*,\rho^*)$, such that
\begin{enumerate}
\item[(i)] $\lambda_*, \rho_*\colon \mathbf{L}^- \rightarrow
\mathbf{M}^-$  and $\lambda^*, \rho^*\colon \mathbf{M}^- \rightarrow \mathbf{L}^-$,
\item[(ii)] $(\lambda_*,\lambda^*)$ and $(\rho_*,\rho^*)$ are adjoint pairs with
respect to $\leq$,
\item[(iii)] $\lambda_*$ and $\rho_*$ preserve multiplication and unit.
\end{enumerate}  
Then, an obvious modification of Definition~\ref{gen-kite} works, producing
a perfect FL$_w$-algebra. If $\lambda_*\circ\lambda^*$ and $\rho_*\circ\rho^*$
are both the identity on $M^-$, then the resulting algebra is a perfect
pseudo BL-algebra. Moreover, if $\lambda_*$ is an automorphism, then
$\lambda^* = (\lambda_*)^{-1}$; if furthermore $\rho^* = \rho_* = id$, then we
obtain exactly Definition~\ref{gen-kite}.
We intend to investigate this construction further. 

\section{Acknowledgements}

We are grateful to an anonymous referee for very thorough reading and
suggestions that led to substantial improvements in presentation.
We are also grateful to Anatolij Dvure\v{c}enskij for pointing out
in private communication that our Theorem~\ref{cat-eqv-ppMV-LGA} generalises
a result in Di Nola, Dvure\v{c}enskij, Tsinakis~\cite{DDT08}.

\begin{bibdiv}
\begin{biblist}  

\bibselect{pseudoMV}

\end{biblist}
\end{bibdiv}

\end{document}